\numberwithin{equation}{section}
\numberwithin{table}{section}
\theoremstyle{plain}
\newtheorem{theorem}{Theorem}[section]
\newtheorem{lemma}[theorem]{Lemma}
\newtheorem{proposition}[theorem]{Proposition}
\newtheorem{corollary}[theorem]{Corollary}
\newtheorem{claim}[theorem]{Claim}
\theoremstyle{definition}
\newtheorem{definition}[theorem]{Definition}
\theoremstyle{remark}
\newtheorem{remark}[theorem]{Remark}
\newcommand{\RR}{\mathbb{R}}
\newcommand{\NN}{\mathbb{N}}
\DeclareMathOperator{\diam}{diam}
\DeclareMathOperator{\vol}{vol}
\newcommand{\Id}{\mathrm{Id}}
\newcommand{\eps}{\varepsilon}
\DeclareMathOperator{\Appr}{Appr}
\DeclareMathOperator{\curv}{curv}
\newcommand{\toGH}{\xrightarrow[]{\mathsf{GH}}}
\newcommand{\GH}{\mathsf{GH}}
\newcommand{\GHPair}{\mathsf{GHPair}}
\newcommand{\GHTuple}{\mathsf{GHTuple}}
\renewcommand{\H}{\mathsf{H}}
\newcommand{\BB}{\overline{B}}
\newcommand{\calA}{\mathcal{A}}
\newcommand{\calM}{\mathcal{M}}
\newcommand{\calN}{\mathcal{N}}
\newcommand{\calS}{\mathcal{S}}
\newcommand{\calL}{\mathcal{L}}
\renewcommand{\Im}{\mathrm{Im}}
\title{Gromov--Hausdorff convergence of metric pairs and metric tuples} 
\date{\today}
\author[A.~Ahumada G\'omez]{Andr\'es Ahumada G\'omez $^\mathrm{A}$}
\address[Ahumada G\'omez]{Facultad de Ciencias, Universidad Nacional Aut\'onoma de M\'exico, Mexico}
\email{andres.ahumada.gomez@ciencias.unam.mx}
\thanks{$^{\mathrm{A}}$Supported by CONACYT Doctoral Scholarship No. 734952.}
\author[M.~Che]{Mauricio Che $^\mathrm{B}$}
\address[Che]{Department of Mathematical Sciences, Durham University, United Kingdom.}
\email{mauricio.a.che-moguel@durham.ac.uk}
\thanks{$^{\mathrm{B}}$Supported by CONACYT Doctoral Scholarship No. 769708.}
\subjclass[2020]{53C23, 53C20}
\keywords{Metric pairs, metric tuples, Gromov--Hausdorff convergence, embedding theorem, completeness theorem, compactness theorem, stratified spaces, equivariant convergence, Alexandrov spaces, extremal sets}
\begin{document}
	
	\maketitle

	\setcounter{tocdepth}{1}
	\tableofcontents
	
	\begin{abstract}
		We study the Gromov--Hausdorff convergence of metric pairs and metric tuples and prove the equivalence of different natural definitions of this concept. We also prove embedding, completeness and compactness theorems in this setting. Finally, we get a relative version of Fukaya's theorem about quotient spaces under Gromov--Hausdorff equivariant convergence and a version of Grove--Petersen--Wu's finiteness theorem for stratified spaces.
	\end{abstract}

	\section{Introduction}
	The Gromov--Hausdorff convergence is a way to quantitatively study metric spaces. It was introduced by M. Gromov in \cite{gromov2,gromov,gromov1999metric} (cf. \cite{edwards}). It has been used to measure the rigidity and stability of several different geometric and topological properties. Some remarkable theoretical results involving the Gromov--Hausdorff convergence include Gromov's compactness theorem \cite{gromov2,gromov,gromov1999metric} and Grove--Petersen--Wu's finiteness theorem \cite{Grove1990,Grove1991} which, in turn, have motivated the development of very fruitful theories of synthetic curvature bounds for metric spaces, namely, the study of Alexandrov spaces \cite{burago,bgp} and ${\sf RCD}$ spaces \cite{ambrosio-gigli-savare,lott-villani,sturm1,sturm2}. In a more applied spirit, the Gromov--Hausdorff convergence has also been used in the recognition of point-cloud data sets \cite{memoli05} and in Topological Data Analysis as a way to measure the stability of persistence diagrams and other topological and geometric signatures of metric spaces \cite{chazal-et-al,stability-tda}. 
	
	In this paper, we study the Gromov--Hausdorff convergence of metric pairs. This notion was introduced in \cite{CGGGMS}, where the authors studied a family of functors that produce generalised spaces of persistence diagrams and prove the continuity of the bottleneck functor under their definition of convergence for metric pairs. The Gromov--Hausdorff convergence of metric pairs is an extension of the pointed Gromov--Hausdorff convergence, which can be formulated in several different ways, such as those considered in \cite{burago,herron,jansen}. We gather the natural generalisations of these different formulations to the setting of metric pairs, prove the equivalence between the resulting definitions, and carry some well-known basic properties of the pointed case to our framework. Moreover, we prove that this notion of convergence is metrisable by defining Gromov--Hausdorff distance functions on the space of metric pairs that induce such convergence. In particular, this implies that the bottleneck functor $\mathcal{D}_\infty$ considered in \cite[Theorem A]{CGGGMS} is continuous and not only sequentially continuous. This is the content of section \ref{sec:GH for metric pairs}.
	
	The embedding, completeness,  
	and compactness theorems are fundamental results in the classical theory of Gromov--Hausdorff convergence \cite{burago,gromov1999metric}. Based on \cite{herron}, in section \ref{sec:main results} we prove versions of these theorems for the class of metric pairs. Moreover, we establish analogous results for a larger class, namely, the class of metric tuples. We do this in section \ref{sec:tuples}. Here we include the statements of these results in the setting of metric pairs.
	
	\begin{theorem}[Embedding theorem]\label{thm:embedding}
		Let $\{(X_i,A_i)\}_{i\in\NN}$ be a sequence of proper metric pairs. Suppose that
		\[
		\sum_{i=1}^{\infty} d_{\GH}((X_i,A_i),(X_{i+1},A_{i+1})) < \infty.
		\]
		Then there exist a non-complete locally complete metric space $Y$ and a closed subset $W\subset \overline{Y} \setminus Y=:Z$, where $\overline{Y}$ is the metric completion of $Y$, with the following properties:
		\begin{enumerate}
			\item For each $i$, the space $X_i$ naturally isometrically embeds into $Y$.
			\item The space $\overline{Y}$ is proper.
			\item $(X_i,A_i)\toGH (Z,W)$.
			\item For all $R>0$, there are $R_i>R$ such that $R_i\to R$ and $\BB_{R_i}(W)\cap X_i \xrightarrow[]{\H} \BB_R(W)\cap Z$. Moreover, if all $X_i$ are length, then $Z$ is a geodesic, and in this setting,
			\item For all $R>0$, $\BB_R(A_i)\cap X_i\xrightarrow{\H} \BB_R(W)\cap Z$.
		\end{enumerate}
	\end{theorem}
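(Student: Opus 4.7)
The plan is to adapt Herron's telescoping embedding construction for the pointed Gromov--Hausdorff setting to the relative setting, maintaining simultaneous control of the distinguished subsets $A_i$. Using the approximation characterisation of $d_{\GH}$ for metric pairs established in Section~\ref{sec:GH for metric pairs}, fix a summable sequence $\eps_i \searrow 0$ with $\eps_i > 2\,d_{\GH}((X_i, A_i), (X_{i+1}, A_{i+1}))$, and extract admissible metrics $\rho_i$ on $X_i \sqcup X_{i+1}$ extending the given ones such that the $\rho_i$-Hausdorff distances between $X_i$ and $X_{i+1}$, and between $A_i$ and $A_{i+1}$, are both bounded by $\eps_i$. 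Build $Y$ inductively: set $Y_1 = X_1$, and define $Y_{i+1} := Y_i \sqcup X_{i+1}$ equipped with the unique metric extending $d_{Y_i}$ and $d_{X_{i+1}}$ for which
\[
d_{Y_{i+1}}(y, x) \;=\; \inf_{z \in X_i}\bigl(d_{Y_i}(y, z) + \rho_i(z, x)\bigr), \qquad y \in Y_i,\ x \in X_{i+1}.
\]
By induction, each $X_j$ embeds isometrically in every $Y_i$ with $i \ge j$, and hence in the direct limit $Y := \bigcup_i Y_i$, settling~(1).

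The completion $\overline{Y}$ adds ``escaping'' Cauchy classes, i.e.\ those represented by sequences $(x_{i_k})$ with $x_{i_k} \in X_{i_k}$ and $i_k \to \infty$. Define $Z := \overline{Y} \setminus Y$; this is exactly the set of such escaping classes, and $Y$ is open in $\overline{Y}$ (hence locally complete) by direct inspection of Cauchy sequences. Define $W \subset Z$ as the set of classes representable by escaping sequences with $x_{i_k} \in A_{i_k}$; closedness of $W$ follows by a diagonal argument together with tail summability of~$\eps_i$. For~(2), the key estimate is the telescoping bound $d_Y(X_i, X_j) \le \sum_{k=i}^{j-1} \eps_k$ for $i<j$; a closed ball in $\overline{Y}$ can be covered by finite $\eta$-nets obtained by combining $\eta$-nets of $X_i$ for finitely many initial $i$ (using properness of each $X_i$) and using tail summability of $\eps_i$ to absorb the remaining levels, yielding total boundedness and hence compactness.

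For~(3), the isometric embedding $X_i \hookrightarrow \overline{Y}$ together with the uniform bounds $d(X_i, Z) = O(\sum_{k \ge i} \eps_k)$ and $d(A_i, W) = O(\sum_{k \ge i} \eps_k)$ produces explicit metric-pair $\eps$-approximations between $(X_i, A_i)$ and $(Z, W)$, which by the equivalence of definitions from Section~\ref{sec:GH for metric pairs} gives $(X_i, A_i) \toGH (Z, W)$. The Hausdorff statement~(4) is the main technical hurdle: one chooses $R_i = R + \delta_i$ with $\delta_i \to 0$ depending on the tail $\sum_{k \ge i} \eps_k$, so that each $x \in \BB_{R_i}(W) \cap X_i$ lies within $O(\delta_i)$ of a point of $\BB_R(W) \cap Z$ in $\overline{Y}$, while each $z \in \BB_R(W) \cap Z$ is realised as a limit of points of $X_i$ with distance to $W$ at most $R_i$. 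The extra slack $\delta_i$ is unavoidable because $W$ sits in $Z$, not in $X_i$, so distances from points of $X_i$ to $W$ inherit the telescoping overshoot.

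If every $X_i$ is a length space, the gluing preserves the length property, so $Y$ is length and $\overline{Y}$ is a proper length space, hence geodesic by Hopf--Rinow. Then $Z$ is itself length: given $p, q \in Z$, represent both by escaping sequences in a common $X_{i_n}$ (shifting indices via the telescoping bounds), take midpoints $m_n \in X_{i_n}$ by the length property, and extract a convergent subsequence of $(m_n)$ in $\overline{Y}$ by properness; the limit lies in $Z$ since $i_n \to \infty$ and is a midpoint of $p$ and $q$. Hence $Z$ is a proper length space, so it is geodesic. For~(5), the length property allows one to replace approximations by genuine paths in $X_i$ terminating near $A_i$, absorbing the slack $\delta_i$ and yielding the cleaner $\BB_R(A_i) \cap X_i \xrightarrow{\H} \BB_R(W) \cap Z$. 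The main obstacle throughout is the simultaneous bookkeeping of telescoping errors at the levels of the ambient spaces, the distinguished subsets, and the Hausdorff-ball radii, and the verification that the abstract boundary $Z$ together with $W$ correctly models the GH limit as a metric pair.
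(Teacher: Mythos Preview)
Your overall architecture matches the paper's: both adapt Herron's telescoping construction, define $W\subset Z$ as limits of sequences $a_i\in A_i$ with $d(a_i,a_{i+1})<\eps_i$, and verify (1)--(5) by estimates analogous to the pointed case. The paper in fact outsources the construction of $Y$ and the properness of $\overline{Y}$ directly to Herron (fixing a single $w\in W$ and a sequence $a_i\to w$ so that $(X_i,a_i)\toGH(Z,w)$ in the pointed sense), and then proves (3) via three explicit ``engulfing'' claims; your route is the same in spirit.

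There is, however, a genuine gap in your setup. You write that from $d_{\GH}((X_i,A_i),(X_{i+1},A_{i+1}))<\eps_i/2$ one may ``extract admissible metrics $\rho_i$ on $X_i\sqcup X_{i+1}$ \dots\ such that the $\rho_i$-Hausdorff distances between $X_i$ and $X_{i+1}$, and between $A_i$ and $A_{i+1}$, are both bounded by $\eps_i$.'' For proper but \emph{non-compact} pairs this is false: the truncated distance used in Section~\ref{sec:GH for metric pairs} only produces an $(\eps_i;A_i,A_{i+1})$-admissible metric, which controls $d_\H(A_i,A_{i+1})$ and the inclusions $\BB_{1/\eps_i}(A_i)\subset B_{\eps_i}(X_{i+1})$, $\BB_{1/\eps_i}(A_{i+1})\subset B_{\eps_i}(X_i)$, but says nothing about points far from $A_i$. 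In particular your global telescoping bound $d_Y(X_i,X_j)\le\sum_{k=i}^{j-1}\eps_k$ and your total-boundedness argument for $\overline{Y}$ do not follow as stated; both must be localised to balls of radius $R<1/\eps_n$ around the $A_i$ (this is exactly what the paper's Engulfing Conditions and its reduction to the pointed case accomplish). Once you replace the global Hausdorff control by these localised inclusions, the rest of your outline goes through and coincides with the paper's argument.
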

	
	\begin{theorem}[Completeness theorem]\label{thm:completeness}
		The space of all isometry classes of proper metric pairs $(\GHPair,d_\GH)$ is a complete metric space.
	\end{theorem}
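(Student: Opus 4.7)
The plan is to reduce the statement to the Embedding theorem (Theorem \ref{thm:embedding}) via the standard summable-subsequence trick familiar from the classical theory of Gromov--Hausdorff convergence. Given a Cauchy sequence $\{(X_i, A_i)\}_{i \in \NN}$ in $(\GHPair, d_\GH)$, I would first pass to a subsequence $\{(X_{i_k}, A_{i_k})\}_{k \in \NN}$ satisfying
\[
d_\GH\bigl((X_{i_k}, A_{i_k}), (X_{i_{k+1}}, A_{i_{k+1}})\bigr) < 2^{-k},
\]
so that the series of consecutive Gromov--Hausdorff distances is summable. Theorem \ref{thm:embedding} then applies and produces a non-complete locally complete metric space $Y$ with proper completion $\overline{Y}$, together with a closed set $W \subset Z := \overline{Y} \setminus Y$ such that $(X_{i_k}, A_{i_k}) \toGH (Z, W)$ along the chosen subsequence.

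Next I would verify that the candidate limit $(Z, W)$ is itself an object of $\GHPair$, that is, that $Z$ is a proper metric space. Because $Y$ is locally complete, each point of $Y$ has a neighbourhood that is complete, hence closed in $\overline{Y}$; a short argument then shows that $Y$ is open in $\overline{Y}$, so $Z$ is closed in $\overline{Y}$. Since $\overline{Y}$ is proper and a closed subset of a proper metric space is proper, $Z$ is proper. Combined with the fact that $W$ is closed in $Z$, this places $(Z, W)$ in $\GHPair$.

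Finally, I would upgrade the subsequential convergence to convergence of the full sequence: since $\{(X_i, A_i)\}$ is Cauchy with respect to $d_\GH$ and a subsequence converges to $(Z, W)$, the triangle inequality for $d_\GH$ yields $(X_i, A_i) \toGH (Z, W)$, proving completeness. The hard part is entirely absorbed into Theorem \ref{thm:embedding}; once it is available, the remaining steps are essentially formal, with the only nontrivial verification being the properness of $Z$, which is indeed why the embedding theorem was designed to output a \emph{proper} completion $\overline{Y}$ in the first place.
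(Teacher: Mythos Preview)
Your proposal is correct and follows essentially the same approach as the paper: pass to a summable subsequence, invoke Theorem~\ref{thm:embedding} to obtain a proper limit pair $(Z,W)$, and then upgrade subsequential to full convergence via the Cauchy condition. The paper's proof is slightly terser, asserting properness of $(Z,W)$ directly from the embedding theorem, whereas you spell out why $Z$ is closed in the proper space $\overline{Y}$; both are fine.
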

	
	\begin{theorem}[Compactness theorem]\label{thm:precompactness}
		For any collection $\mathcal{X}$ of (isometry classes of) proper metric pairs that is uniformly bounded in the sense of pairs, that is, if there exists some $C>0$ such that $\diam(A)\leq C$ for any $(X,A)\in\mathcal{X}$, the following assertions are equivalent:
		\begin{enumerate}
			\item $\mathcal{X}$ is precompact with respect to $d_{\GH}$.
			\item There exists $\pi\colon (0,\infty)\to (0,\infty)$ such that for all $\eps>0$,
			\[
			P(\eps, \BB_{1/\eps}(A))\leq \pi(\eps)
			\]
			for all $(X,A)\in \mathcal{X}$, where $P$ is the packing number function (see Definition \ref{def:covering numbers and so on}).
			\item There exists $\nu\colon (0,\infty)\to (0,\infty)$ such that for all $\eps>0$,
			\[
			N(\eps, \BB_{1/\eps}(A))\leq \nu(\eps)
			\]
			for all $(X,A)\in \mathcal{X}$, where $N$ is the inner covering number function (see Definition \ref{def:covering numbers and so on}).
		\end{enumerate}
	\end{theorem}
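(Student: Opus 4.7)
The plan is to prove the cycle of implications $(1)\Rightarrow(2)$, $(2)\Leftrightarrow(3)$, $(3)\Rightarrow(1)$, which then closes up to the stated three-way equivalence. The flavour of the argument follows Gromov's classical compactness theorem in the pointed setting, with the extra bookkeeping forced by the distinguished subsets $A_i$ being handled through the uniform diameter bound $\diam(A_i)\leq C$.

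The equivalence $(2)\Leftrightarrow(3)$ is purely metric: the standard inequalities relating packing and inner covering numbers (of the form $N(2\varepsilon,S)\leq P(\varepsilon,S)\leq N(\varepsilon/2,S)$, with the precise form dictated by the conventions of Definition \ref{def:covering numbers and so on}) turn a uniform bound on one function, after an $\varepsilon$-reparametrisation, into a uniform bound on the other. So $\pi$ and $\nu$ can be obtained from each other by rescaling $\varepsilon$.

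For $(1)\Rightarrow(2)$, assume $\mathcal{X}$ is precompact, hence totally bounded. Fix $\varepsilon>0$ and pick a finite family $(Y_1,B_1),\ldots,(Y_k,B_k)$ of proper metric pairs with every $(X,A)\in\mathcal{X}$ at $d_{\GH}$-distance at most $\varepsilon/4$ from some $(Y_j,B_j)$. Each $Y_j$ is proper, so $\BB_{1/\varepsilon+\varepsilon/4}(B_j)$ has a finite $\varepsilon/2$-packing. Transferring such a packing through an approximation realising the $d_{\GH}$-proximity (whose existence is guaranteed by the equivalent definitions gathered in Section \ref{sec:GH for metric pairs}) yields an $\varepsilon$-packing of $\BB_{1/\varepsilon}(A)$ whose size is bounded by the maximum of the finitely many model packing numbers; taking $\pi(\varepsilon)$ to be this maximum gives $(2)$.

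The main content is $(3)\Rightarrow(1)$. Given a sequence $\{(X_i,A_i)\}\subset\mathcal{X}$, I extract a $d_{\GH}$-convergent subsequence. Pick $a_i\in A_i$; since $\diam(A_i)\leq C$, one has $\BB_R(A_i)\subset\BB_{R+C}(a_i)$ for all $R>0$. Hypothesis $(3)$ then produces, for each pair of positive integers $(n,k)$, a $1/k$-net $N_i^{n,k}\subset \BB_n(A_i)$ of cardinality bounded by a number $\nu(n,k)$ independent of $i$; I arrange these nets to be nested in both $n$ and $k$, to contain $a_i$, and to contain a $1/k$-net of $A_i$ as a distinguished subset. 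A Cantor diagonal argument applied to the collection of finite pointed metric spaces $\{(N_i^{n,k},a_i)\}$, whose cardinalities and diameters are uniformly controlled, extracts a subsequence along which every $(N_i^{n,k},a_i)$ converges to a finite pointed metric space $(N_\infty^{n,k},a_\infty)$, together with its distinguished $A_i$-sub-net converging to a subset $A_\infty^{n,k}$. The nested compatibility then glues the family $\{N_\infty^{n,k}\}$ into a proper pointed metric space $(X_\infty,a_\infty)$ containing a closed subset $A_\infty:=\overline{\bigcup_{n,k} A_\infty^{n,k}}$, and the construction of the nets exhibits $(X_i,A_i)\toGH(X_\infty,A_\infty)$.

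The main obstacle is the last assembly step: one has to verify that the object obtained by gluing the finite-net limits is a genuine metric pair (so that $A_\infty$ is closed and $(X_\infty,A_\infty)$ is proper) and that convergence holds in $d_{\GH}$ of pairs, not merely as pointed spaces. Here the uniform diameter bound $\diam(A_i)\leq C$ is essential: it lets one include honest $A_i$-sub-nets inside $N_i^{n,k}$ for all sufficiently large $n$, ensures that the $A_i$ do not escape to infinity, and promotes the Gromov--Hausdorff convergence of the finite nets into Hausdorff convergence of the subsets $A_i$, which is exactly what is needed to conclude convergence in $\GHPair$.
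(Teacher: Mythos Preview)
Your argument is essentially correct, and the implications $(2)\Leftrightarrow(3)$ and $(1)\Rightarrow(2)$ match the paper's treatment almost verbatim (the paper uses Lemma~\ref{lem:herron-3.9} for the packing transfer in $(1)\Rightarrow(2)$, which is exactly the ``transferring through an approximation'' step you sketch).

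The genuine divergence is in $(3)\Rightarrow(1)$. You build the limit pair by hand: choose basepoints $a_i\in A_i$, exploit $\diam(A_i)\leq C$ to reduce to the pointed picture, produce nested finite nets with tagged $A_i$-sub-nets, diagonalise, and assemble $(X_\infty,A_\infty)$. The paper instead avoids constructing the limit altogether: it shows that any sequence in $\mathcal{X}$ admits, for each fixed $\varepsilon>0$, a subsequence whose members are pairwise within $2\varepsilon$ in $d_{\GH}$, by extracting nets $\{x_{i1},\dots,x_{iN}\}$ in $\BB_{2/\varepsilon}(A_i)$ with the first $k$ of them $\varepsilon/2$-close to $A_i$, matching their mutual distances, and feeding this into Lemma~\ref{lem:GH-with-nets}. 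This yields total boundedness of $\mathcal{X}$, and precompactness then comes for free from the already-established completeness of $(\GHPair,d_{\GH})$ (Theorem~\ref{thm:completeness}). What the paper's route buys is brevity and the ability to outsource all the delicate ``assembly'' issues you flag (closedness of $A_\infty$, properness of $X_\infty$, upgrading pointed convergence to pair convergence) to the embedding and completeness theorems. What your route buys is self-containment: it does not rely on Theorems~\ref{thm:embedding} and~\ref{thm:completeness}, and it makes the limit object explicit.
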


	The extension of the theory of Gromov--Hausdorff convergence to the class of metric pairs allows for a finer study of classes of metric spaces where distinguished subsets arise naturally. In this direction, and as an application of our scheme, in section \ref{sec:applications} we obtain a formulation of the convergence of extremal sets of Alexandrov spaces under the usual Gromov--Hausdorff convergence \cite{petrunin} using the language of convergence of tuples.
	
	\begin{theorem}\label{thm:extremal}
		Let $\{X_i\}_{i\in\NN}$ and $X$ be elements in $\calA(n,k)$, the class of $n$-dimensional Alexandrov spaces with curvature bounded below by $k\in\RR$, such that $X_i\toGH X$. Let $E^m_i\supseteq  \dots \supseteq E^0_i$ be nested sequences of extremal sets in $X_i$ and $E^m\supseteq \dots \supseteq E^0$  be sets in $X$ such that $E_i^l\to E^l$ for each $l\in\{1,\dots, m\}$. Then each $E^l$ is an extremal set in $X$ and 
		\[
		(X_i,E^m_i,\dots, E^0_i) \toGH (X,E^m,\dots, E^0).
		\]
	\end{theorem}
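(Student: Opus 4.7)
The plan is to prove the two claims separately: (a) each limit set $E^l$ is extremal in $X$, and (b) the tuple Gromov--Hausdorff convergence $(X_i,E^m_i,\dots,E^0_i) \toGH (X,E^m,\dots,E^0)$ holds. Alexandrov-geometric stability results handle (a), while the characterisations of tuple convergence from section \ref{sec:tuples} handle (b).

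For (a), I would invoke the stability of extremal sets under noncollapsing Gromov--Hausdorff convergence of Alexandrov spaces with a uniform lower curvature bound, as developed by Petrunin in \cite{petrunin}. The hypothesis $\dim X_i = \dim X = n$ forces the convergence $X_i\toGH X$ to be noncollapsing, which is the essential geometric input. Petrunin characterises extremality via invariance of $E$ under the gradient flow of every semiconcave function defined in a neighbourhood of $E$, equivalently by a condition on the directions of the gradients of distance functions $d_q$ at points of $E$ for $q\notin E$. Both descriptions pass to noncollapsing GH limits, since the relevant distance functions, their upper gradients and their associated flows are stable under such limits. Applied to each $l$ separately, this yields the extremality of $E^l$ in $X$.

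For (b), the plan is to reconcile the hypothesis $E^l_i \to E^l$, interpreted in the standard Alexandrov-geometric sense, with the definitions of tuple GH convergence introduced in this paper. Unfolding the hypothesis produces a common sequence of $\varepsilon_i$-approximations $f_i \colon X_i \to X$ with $\varepsilon_i \to 0$ such that $d_\H(f_i(E^l_i), E^l) \to 0$ simultaneously for every $l$. By the equivalent characterisations of tuple GH convergence established in section \ref{sec:tuples} (the tuple analogues of Theorem \ref{thm:embedding}), this common-approximation property is precisely the criterion for the asserted convergence, so (b) is immediate once (a) is in hand and the common approximations are available.

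The main obstacle is the stability argument underlying (a), which relies on the delicate machinery of gradient flows and distance-function analysis in Alexandrov geometry near the limit space. A conceptually cleaner route would bypass these technicalities by appealing to Perelman's (equivalently, Kapovitch's) stability theorem, which in the noncollapsing regime promotes GH approximations to genuine homeomorphisms $X\to X_i$ for large $i$, along which the extremal condition can be transported. A secondary point in (b) is confirming that a single sequence of approximations serves for all $l$; the nested structure $E^m_i \supseteq \dots \supseteq E^0_i$, combined with a diagonal argument and the compactness of the Hausdorff topology on closed subsets of $X$ (or of the appropriate balls in the noncompact case), handles this without difficulty.
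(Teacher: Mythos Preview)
Your proposal is correct and follows essentially the same route as the paper: invoke Petrunin's \cite[Lemma~4.1.3]{petrunin} for part (a), and for part (b) observe that the hypothesis $E^l_i\to E^l$ is precisely the statement that a single fixed sequence of $\eps_i$-approximations $\phi_i\colon X_i\to X$ satisfies $d_\H(\phi_i(E^l_i),E^l)\to 0$ for every $l$, which is exactly what Definition~\ref{def:Gromov-Hausdorff-CGGGMS-tuples} requires (your parenthetical pointer to Theorem~\ref{thm:embedding} is a slip). Two small cautions: no diagonal argument is needed, since common approximations are already built into the meaning of ``$E^l_i\to E^l$ under $X_i\toGH X$''; and the Perelman-stability alternative does not really sidestep Petrunin's analysis, because extremality is a metric rather than a topological condition and so does not transport under a bare homeomorphism.
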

	
	We also prove a relative version of Fukaya's theorem on the convergence of quotient spaces under equivariant convergence \cite{fukaya1}.
	
	\begin{theorem}\label{thm:equivariant-convergence}
		Let $\{X_i\}_{i\in\NN}$ be a sequence in $\calM(n,k,D)$, the class of closed $n$-dimensional Riemannian manifolds with sectional curvature bounded below by $k\in\RR$ and diameter bounded above by $D>0$, and $X$ in $\calA(n,k,D)$, the class of closed $n$-dimensional Alexandrov spaces with curvature bounded below by $k\in\RR$ and diameter bounded above by $D>0$. Let $G$ be a Lie group acting by isometries on each $X_i$ and on $X$. Let $H_0 \leq \dots \leq H_m$ be a sequence of subgroups of $G$. If $(X_i,G) \xrightarrow[]{eGH} (X,G)$ then 
		\[
		(X_i/G,X_i^{H_0}/G,\dots, X_i^{H_m}/G)\toGH (X/G,X^{H_0}/G,\dots, X^{H_m}/G).
		\]
		In the case where $\{ X_i\}_{i\in\NN}$ is a sequence in $\calN(n,k,D,v)$, the class of closed $n$-dimensional Riemannian manifolds with the absolute value of the sectional curvature bounded above by  $k>0$ and diameter bounded above by $D>0$, and $X$ is also in $\calN(n,k,D,v)$ such that $(X_i,G) \xrightarrow[]{eGH} (X,G)$ then 
		\[
		(X_i,X_i^{H_0},\dots,X_i^{H_m})\toGH (X,X^{H_0},\dots,X^{H_m}).
		\]
	\end{theorem}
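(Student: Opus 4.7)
My plan is to reduce both parts of the theorem to tools already at hand: Fukaya's quotient theorem, Theorem \ref{thm:extremal}, and the characterisation of GH convergence of metric tuples developed in Section \ref{sec:tuples}. Throughout, the equivariant convergence $(X_i,G)\xrightarrow{eGH}(X,G)$ will supply GH approximations $f_i\colon X_i\to X$ and almost-homomorphisms $\phi_i\colon G\to G$ with $d(f_i(g\cdot x),\phi_i(g)\cdot f_i(x))<\eps_i$ uniformly in $g$ on compacta and $\eps_i\to 0$, together with an analogous reverse approximation $\psi_i\colon X\to X_i$.

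For the first statement (in $\calM(n,k,D)$), these approximations descend to GH approximations $\bar f_i\colon X_i/G\to X/G$, and Fukaya's theorem yields $X_i/G\toGH X/G$ inside some common Alexandrov class $\calA(n',k')$. The structural observation driving the argument is that for any subgroup $H\leq G$ the quotient $X_i^H/G$ is an extremal subset of $X_i/G$ (via the Perelman--Petrunin theory of isometric actions), and the chain $H_0\leq\dots\leq H_m$ produces the nested family of extremal subsets required as input for Theorem \ref{thm:extremal}. It therefore suffices to establish the Hausdorff convergence $X_i^{H_l}/G\xrightarrow{\H}X^{H_l}/G$ along $\bar f_i$. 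For the forward inclusion, almost-equivariance shows that if $x_i\in X_i^{H_l}$ then $f_i(x_i)$ is $O(\eps_i)$-fixed by each element of $H_l$; since $X^{H_l}/G$ is closed and extremal in the compact Alexandrov space $X/G$, such almost-fixed orbits must accumulate on it. The backward inclusion will follow symmetrically using $\psi_i$ together with a projection from the almost-fixed set to the true $H_l$-fixed set, which is available thanks to the local structure of extremal sets in Alexandrov spaces. Once this Hausdorff convergence is in place, Theorem \ref{thm:extremal} delivers the desired tuple convergence.

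For the second statement (in $\calN(n,k,D,v)$), the two-sided sectional curvature bound together with the volume and diameter bounds places the sequence in the setting of Cheeger's finiteness theorem. After passing to a subsequence, the convergence $X_i\toGH X$ will be realised by diffeomorphisms in the $C^{1,\alpha}$ topology, and the equi-Lipschitz isometric $G$-actions converge in $C^1$ along these diffeomorphisms. Consequently, each $X_i^{H_l}$ is a smooth closed submanifold of uniformly bounded geometry, and the sequence $X_i^{H_l}$ converges Hausdorff to $X^{H_l}$ inside the common ambient space. The tuple criterion of Section \ref{sec:tuples} then gives the desired convergence directly, without having to pass to quotients.

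The main obstacle I anticipate is the backward direction of the Hausdorff convergence in the quotient case: given $[y]\in X^{H_l}/G$, one must produce a nearby $[x_i]\in X_i^{H_l}/G$. The natural candidate coming from $\psi_i(y)$ is only almost-fixed by $\phi_i^{-1}(H_l)$, and upgrading this to an actual fixed orbit in $X_i$ requires a uniformly controlled projection onto $X_i^{H_l}/G$. Making this projection work uniformly in $i$ without relying on smooth tools, and checking that the resulting point has image close to $[y]$ under $\bar f_i$, is the technical crux; the local conical structure of extremal sets and the semicontinuity of orbit types under equivariant convergence should be the relevant ingredients.
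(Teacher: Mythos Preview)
Your plan diverges from the paper's much shorter argument. The paper does not establish the Hausdorff convergence of fixed-point sets by hand, nor does it invoke Theorem~\ref{thm:extremal}. For the first statement it applies Fukaya's quotient theorem \cite[Theorem~2-1]{fukaya1} twice: once to $(X_i,G)\xrightarrow{eGH}(X,G)$ to obtain $X_i/G\toGH X/G$, and once after observing that the actions restrict to each $X_i^{H_j}$ and $X^{H_j}$, yielding their convergence; the tuple convergence then follows directly from the definition. For the second statement it cites \cite[Theorem~6.9]{fukaya2}, which in the non-collapsing bounded-curvature setting directly produces equivariant homeomorphisms $f_i\colon(X_i,G)\to(X,G)$ that are also Gromov--Hausdorff approximations; restricting $f_i$ to each $X_i^{H_j}$ gives the tuple convergence immediately.

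Your route is not misguided---the paper itself remarks that the first part can be read as an instance of Theorem~\ref{thm:extremal}---but it is considerably more work, and the step you flag as the ``main obstacle'' is a genuine one that your sketch does not resolve: upgrading an almost-$H_l$-fixed point in $X_i$ to a true $H_l$-fixed point with control uniform in $i$ is exactly the kind of statement that can fail without additional structure, and the appeal to conical structure and orbit-type semicontinuity is only a heuristic. The paper avoids this difficulty rather than confronting it, by packaging everything into Fukaya's theorems. Likewise, your Cheeger/$C^{1,\alpha}$ approach to the second part is heavier than needed and would still require you to separately establish convergence of the $G$-actions along the diffeomorphisms, whereas Fukaya's equivariant result delivers the equivariant homeomorphisms in a single citation.
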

	
	Finally, we get a relative version of Grove--Petersen--Wu's finiteness theorem \cite{Grove1991} for tuples induced by stratified manifolds with extremal strata.
	
	\begin{theorem}\label{thm:stratified-spaces}
		The class $\calS(n,k,D,v)$ of $n$-dimensional stratified manifolds with curvature bounded below by $k\in\RR$ in the sense of Alexandrov, diameter bounded above by $D>0$, and volume bounded below by $v>0$ is precompact in $\GHTuple_n$. Moreover, the subclass $\calS^e(n,k,D,v)$ of stratified manifolds satisfying the same conditions as before, where additionally all the strata are extremal sets, contains only finitely many relative topological types.
	\end{theorem}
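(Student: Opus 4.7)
My plan is to separate the statement into a precompactness claim for $\calS(n,k,D,v)$ and a finiteness claim for the subclass $\calS^e(n,k,D,v)$, and to treat them in turn.

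For the precompactness I would apply the tuple analogue of Theorem~\ref{thm:precompactness} developed in Section~\ref{sec:tuples}, viewing a stratified space $X\in\calS(n,k,D,v)$ as the tuple formed by the ambient space together with its strata $X=X^n\supseteq X^{n-1}\supseteq\cdots\supseteq X^0$. Every stratum has diameter at most $\diam(X)\leq D$, so the resulting tuples are uniformly bounded in the tuple sense. Since the ambient space $X$ is an $n$-dimensional Alexandrov space with $\curv\geq k$, $\diam\leq D$ and $\vol\geq v$, the Bishop--Gromov inequality (in its Alexandrov form due to Burago--Gromov--Perelman) yields a packing function $\pi$ independent of $X$ with $P(\eps,X)\leq\pi(\eps)$ for all $\eps>0$. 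Because each stratum $X^j$ lies in $X$, the inclusion $\BB_{1/\eps}(X^j)\subseteq X$ gives $P(\eps,\BB_{1/\eps}(X^j))\leq\pi(\eps)$, verifying condition (2) of the tuple compactness theorem uniformly across $\calS(n,k,D,v)$.

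For the finiteness, the precompactness just established reduces the task to showing that every tuple-GH limit of a sequence in $\calS^e(n,k,D,v)$ has a neighborhood in $\GHTuple_n$ containing only finitely many relative homeomorphism types. Suppose $(X_i, E_i^{n-1},\dots,E_i^0)\toGH (X,E^{n-1},\dots,E^0)$ with each $X_i\in\calS^e(n,k,D,v)$. By Theorem~\ref{thm:extremal} each $E^j$ is extremal in $X$. The ambient limit $X$ inherits $\curv\geq k$ and $\diam\leq D$, and by continuity of volume along non-collapsed Alexandrov convergence one has $\vol(X)\geq v$, so $X$ is itself non-collapsed. I would then invoke Perelman's stability theorem in the version extended by Kapovitch to extremal subsets: this yields, for all sufficiently large $i$, a homeomorphism $X_i\to X$ that simultaneously sends each stratum $E_i^j$ onto $E^j$. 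Hence the relative topological type stabilises along the sequence, and covering the compact closure of $\calS^e(n,k,D,v)$ by finitely many such neighborhoods finishes the argument.

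The main obstacle is the relative stability step. Perelman's classical stability theorem produces homeomorphisms between non-collapsed GH-close Alexandrov spaces but does not, by itself, match prescribed extremal subsets. One therefore needs the strengthening ensuring that the stability homeomorphism can be chosen to send a fixed nested family of extremal sets in the limit onto the corresponding strata in the approximating space; this is the nontrivial geometric input. The tuple precompactness and the closedness of extremality under limits then follow directly from the machinery set up earlier in the paper together with standard facts from Alexandrov geometry.
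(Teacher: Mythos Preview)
Your proposal is correct and follows the same overall architecture as the paper: precompactness first, then finiteness via a relative stability theorem applied to a convergent subsequence. The finiteness half is essentially identical to the paper's argument---you correctly isolate as the nontrivial input a strengthening of Perelman's stability theorem that respects nested extremal subsets; the paper invokes precisely such a result (citing \cite{HS17}, Theorem 4.3), and your contradiction/covering formulation is equivalent to theirs.

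The one genuine methodological difference is in the precompactness step. You invoke the tuple compactness theorem of Section~\ref{sec:tuples} directly, verifying the uniform packing bound via Bishop--Gromov. The paper instead argues sequentially: it first extracts a convergent subsequence of the ambient spaces $X^n_i\toGH X^n$ using the classical precompactness of $\calA(n,k,D)$, then pushes the strata $X^j_i$ forward into $X^n$ via the approximations and applies Blaschke's selection theorem (compactness of closed subsets of a compact space under Hausdorff distance) to extract further subsequences so that each $X^j_i$ converges. Your route is arguably more in keeping with the machinery the paper has just built, while the paper's hands-on argument has the slight advantage of exhibiting the limit tuple concretely as a nested family of closed subsets of a single Alexandrov space $X^n$, which feeds directly into the finiteness step. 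Both approaches are short and yield the same conclusion.
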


	\section{Gromov--Hausdorff convergence of metric pairs}\label{sec:GH for metric pairs}
	Let us recall some basic definitions from the theory of metric spaces.
	\begin{definition}
		A map $d\colon X\times X \to [0,\infty]$ is an \emph{extended metric} on $X$ if it is symmetric and satisfies the triangle inequality, and if $d(x, y) = 0$ if and only if $x = y$. A pair $(X,d)$ where $X$ is a set and $d$ is an extended metric on $X$ is an \emph{extended metric space}. An extended metric $d$ on $X$ is a \emph{metric} if $\Im(d) \subset [0,\infty)$, in which case $(X,d)$ is a \emph{metric space}. We denote $B^{d}_{r}(x)=B_r(x):=\left\lbrace y\in X : d(x,y)<r \right\rbrace$ the \emph{open ball} of radius $r$ around $x$, whereas $\BB_r^d(x)$ denotes the corresponding \emph{closed ball}.
		
		A  metric space $(X,\delta)$ is a \emph{length space} if
		\[
		\delta(x,y)=\inf\left\{ \calL(\gamma)   : \gamma\text{ is a continuous curve from $x$ to $y$} \right\},
		\]
		where $\calL(\gamma)$ denotes the length of the curve $\gamma$.

		A metric space $(X,\delta)$ is \emph{proper} if every closed ball $\BB_{r}(p)$ is compact for every $r>0$ and $p\in X$.
		
	\end{definition}
	
	Let us also remind the definition of the Hausdorff distance between subsets of a metric space.
	\begin{definition}
		For subsets $A$ and $B$ of a metric space $(Z,\delta)$, the \emph{Hausdorff distance} of $A$ and $B$ is defined as 
		\[
		d^{\delta}_\H (A,B) := \inf\left\{\eps > 0 : A \subset B_{\eps}(B)\text{ and } B \subset B_{\eps}(A)\right\}
		\]
		where $B^{\delta}_{\eps}(B)=B_{\eps}(B) := \left\{x \in X :\text{exists }b \in B \text{ such that } \delta(x,b) < \eps\right\}$. Here, we have used the convention that the infimum of an empty set is $+\infty$.
	\end{definition}

	We now define metric pairs, the objects we will deal with in the remainder of the article.

	\begin{definition}
		A \emph{metric pair} consists of a metric space $X$ and a closed non-empty subset $A\subset X$. We will say that a metric pair $(X,A)$ is an \emph{extended metric pair} if $X$ is an extended metric space. We will denote such metric pair by $(X,A)$.
	\end{definition}

	In \cite{CGGGMS}, the authors introduced the following definition of convergence for metric pairs. It is a natural extension of the definition of Gromov--Hausdorff convergence for pointed metric spaces (cf. \cite{burago,gromov1999metric,herron}).
	\begin{definition}\label{def:Gromov-Hausdorff-CGGGMS} 
		A sequence $\{(X_i,A_i)\}_{i\in \NN}$ of metric pairs \emph{converges in the Gromov--Hausdorff topology to a metric pair} $(X,A)$ if there exist sequences
		$\{\eps_i\}_{i\in\NN}$ and $\{R_i\}_{i\in\NN}$ of positive numbers with $\eps_i\searrow 0$, $R_i \nearrow\infty$, and 
		maps $\phi_i \colon \BB_{R_i}(A_i)\to X$ satisfying the following three conditions:
		\begin{enumerate}
			\item $\left| d_{X_i}(x,y)-d_X(\phi_i(x),\phi_i(y) \right|\leq\eps_i$ for any $x,y\in \BB_{R_i}(A_i)$;
			\item $d^{d_{X}}_{\H}(\phi_i(A_i),A)\leq\eps_i$;
			\item $\BB_{R_i}(A)\subset \BB_{\eps_i}(\phi_i(\BB_{R_i}(A_i)))$.
		\end{enumerate}
		We will denote the Gromov--Hausdorff convergence of metric pairs by $(X_i,A_i)\toGH (X,A)$.
	\end{definition}

	In the setting of metric spaces, the definition of Gromov--Hausdorff convergence is usually introduced as the convergence corresponding to the Gromov--Hausdorff distance between metric spaces. Let us recall the definition of this distance.
	
	\begin{definition}
		Let $(X,d_X)$ and $(Y,d_Y)$ be two metric spaces. The \emph{Gromov--Hausdorff distance between $X$ and $Y$} is
		\[
		d_{\GH}(X,Y):=\inf\left\{ d^{\delta}_\H(X,Y):\delta \text{ is an admissible metric on }X\sqcup Y \right\},
		\]
		where a metric $\delta$ on the disjoint union $X\sqcup Y$ is \emph{admissible} if $\left.\delta\right|_{X\times X}=d_X$ and $\left.\delta\right|_{Y\times Y}=d_Y$.
	\end{definition}
	
	The previous definition allows for spaces with infinite Gromov--Hausdorff distance. However, when restricted to compact spaces, one obtains a finite distance function (cf. \cite{burago}). When dealing with non-compact spaces, though, it is customary to use the pointed Gromov--Hausdorff distance (cf. \cite{burago,herron}).
	
	One goal of this article is to prove that the convergence of metric pairs in Definition \ref{def:Gromov-Hausdorff-CGGGMS} can also be metrised. In order to do this, we consider two cases: when metric pairs are compact and when they are not.
	
	\subsection{Compact case}

	Let us first consider the case where the metric spaces are compact.
	\begin{definition}
		Let $(Z,\delta)$ be a metric space, $X,Y \subset Z$ subsets and $A \subset X$, $B \subset Y$ non-empty closed subsets. The \emph{Hausdorff distance} between $(X,A)$ and $(Y,B)$ is given by 
		\[
		d^\delta_{\H} ((X,A),(Y,B)) := d^\delta_\H (X,Y) + d^\delta_\H(A,B)
		\]
	\end{definition}
	
	\begin{definition}
		The \emph{Gromov--Hausdorff distance} between two compact metric pairs $(X,A)$ and $(Y,B)$ is defined as
		\[
		d_{\GH} ((X,A),(Y,B)) := \inf\{d^\delta_\H ((X,A),(Y,B)) : \delta\ \text{admissible on}\ X \sqcup Y\}.
		\]    
	\end{definition}
	
	One typically studies the Gromov--Hausdorff distance from a quantitative point of view through approximations. We now define the corresponding notion for metric pairs.
	\begin{definition}
		Let $X$ and $Y$ be metric spaces and $\eps > 0$. A pair of maps $f \colon X \to Y$ and $g \colon Y \to X$ (not necessarily continuous) is an \emph{$\eps$-(Gromov--Hausdorff) approximation} if for every $x,x_1,x_2 \in X$ and $y,y_1,y_2 \in Y$, 
		\begin{align*}
			|d_X (x_1,x_2) - d_Y(f(x_1),f(x_2))| &< \eps, & 
			d_X (g \circ f(x),x) < \eps,\\
			|d_Y (y_1,y_2) - d_X(g(y_1),g(y_2))| &< \eps, &
			d_Y(f \circ g(y),y) < \eps.
		\end{align*}
		The set of all such pairs is denoted by $\Appr_\eps (X,Y)$. In the case of metric pairs, one restricts to pair maps as follows: For metric pairs $(X,A)$ and $(Y,B)$, we let 
		\[
		\Appr_\eps((X,A),(Y,B)) := \{(f,g) \in \Appr_\eps(X,Y) : d_\H(f(A),B)< \eps\ \text{and}\ d_\H(g(B),A)<\eps\}.
		\]
	\end{definition}
	
	\begin{remark}
		In the literature, Gromov--Hausdorff approximations often are not defined as pairs of maps but as one map $f \colon X \to Y$ where $f$ has \emph{distortion} less than $\eps$, i.e. for all $x_1,x_2 \in X$ the map $f$ satisfies $|d_Y (f(x_1),f(x_2)) - d_X(x_1,x_2)| < \eps$, and $B_\eps(f(X)) = Y$ (compare with the maps $\phi_i$ in Definition \ref{def:Gromov-Hausdorff-CGGGMS}). Observe that $(f,g) \in \Appr_\eps(X,Y )$ already implies that $f$ has these properties (for the same $\eps$). In the following, we will see that Gromov--Hausdorff distance less than $\eps$ corresponds to the existence of $\eps$-approximations (up to a factor). The next proposition shows that (up to another factor) the definition of Gromov--Hausdorff approximations used here can be replaced by the one described in this remark.
	\end{remark}    
	
	\begin{proposition}\label{prop:key-prop}
		Let $f \colon (X,d_X) \to (Y,d_Y)$ be a map between metric spaces with distortion smaller than $\eps > 0$. Then there exists a map $g \colon f(X) \to X$ satisfying $(f,g) \in \Appr_\eps(X,f(X))$. Moreover, if $Y = B_\eps(f(X))$ and $d_\H(f(A),B)<\eps$, then there exists a map $h \colon Y \to X$ such that $(f,h) \in \Appr_{3\eps}((X,A),(Y,B))$.
	\end{proposition}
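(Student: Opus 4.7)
The plan is to build $g$ as a (set-theoretic) right inverse of $f$ on $f(X)$ for part (a), and then extend it to $h\colon Y\to X$ for part (b) by pushing points of $Y\setminus f(X)$ back to nearby points in $f(X)$.

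For part (a), for each $y\in f(X)$ use the axiom of choice to pick some $g(y)\in f^{-1}(y)\subset X$. Since $f\circ g=\Id_{f(X)}$, the identity $d_Y(f\circ g(y),y)=0<\eps$ is immediate. For $x\in X$, apply the distortion bound to the pair $x,g(f(x))$: as $f(g(f(x)))=f(x)$, we get $d_X(g\circ f(x),x)<\eps$. The distortion of $g$ reduces to the distortion of $f$ in the same way, namely $|d_X(g(y_1),g(y_2))-d_Y(y_1,y_2)|=|d_X(g(y_1),g(y_2))-d_Y(f(g(y_1)),f(g(y_2)))|<\eps$. This gives $(f,g)\in\Appr_\eps(X,f(X))$.

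For part (b), extend $g$ to $h\colon Y\to X$: set $h|_{f(X)}=g$, and for $y\in Y\setminus f(X)$ use the hypothesis $Y=B_\eps(f(X))$ to choose $\pi(y)\in f(X)$ with $d_Y(y,\pi(y))<\eps$, then set $h(y):=g(\pi(y))$. The conditions $d_X(h\circ f(x),x)<\eps<3\eps$ and $d_Y(f\circ h(y),y)=d_Y(\pi(y),y)<\eps<3\eps$ are then routine (with the latter trivial when $y\in f(X)$). For the distortion of $h$, given $y_1,y_2\in Y$, one compares $d_X(g(\pi(y_i)),g(\pi(y_j)))$ with $d_Y(\pi(y_1),\pi(y_2))$ via the distortion of $g$ (error $<\eps$), and then compares $d_Y(\pi(y_1),\pi(y_2))$ with $d_Y(y_1,y_2)$ via two triangle inequalities (error $<2\eps$), for a total error $<3\eps$.

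It remains to verify the pair condition $d_\H(h(B),A)<3\eps$; the other condition $d_\H(f(A),B)<\eps<3\eps$ is direct from the hypothesis. Given $b\in B$, choose $a\in A$ with $d_Y(f(a),b)<\eps$ (possible since $B\subset B_\eps(f(A))$), and let $b':=\pi(b)$ (or $b':=b$ if $b\in f(X)$), so $d_Y(f(a),b')<2\eps$; applying the distortion of $f$ to $a,g(b')$ yields $d_X(a,h(b))<3\eps$, so $h(B)\subset B_{3\eps}(A)$. The reverse inclusion is symmetric: given $a\in A$, pick $b\in B$ with $d_Y(f(a),b)<\eps$ and apply the same estimate.

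The bookkeeping is entirely elementary; the only subtlety is apportioning the $3\eps$ budget, with $\eps$ coming from the extension step $y\mapsto\pi(y)$, another $\eps$ from the Hausdorff proximity $d_\H(f(A),B)<\eps$, and a final $\eps$ from the distortion of $f$ applied after pushing everything back into $f(X)$. No step requires more than the triangle inequality once the extension $h$ has been correctly set up, so there is no genuine obstacle.
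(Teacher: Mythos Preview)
Your proposal is correct and follows essentially the same approach as the paper: define $g$ as a right inverse of $f$ on $f(X)$, extend to $h$ by projecting points of $Y\setminus f(X)$ to nearby points of $f(X)$, and then chase triangle inequalities. The only minor difference is in the verification of $A\subset B_{3\eps}(h(B))$: the paper uses $d_X(a,h\circ f(a))<\eps$ directly, whereas you (more carefully) pick $b\in B$ with $d_Y(f(a),b)<\eps$ and bound $d_X(a,h(b))$; your version is arguably cleaner since it makes explicit that the approximating point lies in $h(B)$.
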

	\begin{proof}
		We define $g$ choosing some $g(y) \in f^{-1}(y)$ for $y\in f(X)$. We note that $f \circ g = \left.\Id\right|_{f(X)}$. For $y_1,y_2 \in f(X)$, 
		\[\left|d_X(g(y_1),g(y_2)) - d_Y (y_1,y_2)\right| =\left|d_X(g(y_1),g(y_2)) - d_Y (f(g(y_1)),f(g(y_2)))\right| < \eps,
		\]
		and for $x \in X$, 
		\[
		d_X(x,g \circ f(x)) = \left|d_X(x,g \circ f(x)) - d_Y(f(x),f(g \circ f(x)))\right| < \eps.
		\]
		These two inequalities are satisfied because $f$ has distortion less than $\eps$. The remaining two inequalities are satisfied trivially. 
		Thus, $(f,g) \in \Appr_{\eps}(X,f(X))$. 
		
		If $Y = B_{\eps}(f(X))$, we define 
		\[
		h(y) := 
		\begin{cases} 
			g(y) & \text{if }\ y \in f(X),\\
			g(y') & \text{if }\ y\not\in f(X)\ \text{and}\ y'\in f(X)\ \text{is such that}\ d_Y(y,y') < \eps.
		\end{cases}
		\]We have that $h\circ f =g\circ f$ and then for all $x \in X$, 
		\[
		d_X(h \circ f(x),x) < \eps.
		\]
		Now for $y \in Y$, using $f \circ g = \left.\Id\right|_{f(X)}$ or $f \circ h(y) = f \circ g(y') = y'$ for $y' \in f(X)\cap B_\eps(y)$ as in the definition of $h$, we get 
		\[
		d_Y (f \circ h(y),y) = d_Y(y',y) < \eps.
		\]
		Regarding the distortion of $h$ for every $y_1,y_2 \in Y$, 
		\begin{eqnarray*}
			\left|d_X(h(y_1),h(y_2)) - d_Y (y_1,y_2)\right| &\leq&\left|d_X(h(y_1),h(y_2)) - d_Y (f(h(y_1)),f(h(y_2)))\right|\\ \nonumber
			&       &+\left|d_Y (f(h(y_1)),f(h(y_2))) - d_Y (y_1,y_2)\right|\\ &<&\eps+d_Y(f \circ h(y_1),y_1) +d_Y(f \circ h(y_2),y_2)\\ &<& 3\eps.
		\end{eqnarray*}
		Finally we can prove that $d_\H(h(B),A)<3\eps$ as follows: if $b\in B$ then we know there is some $a\in A$ such that $d(f(a),b)<\eps$ because $d_\H(f(A),B)<\eps$, therefore we get
		\[
		d(h(b),a)<\eps+d(f\circ h(b),f(a))\leq \eps+d(f\circ h(b),b)+d(f(a),b)<3\eps 
		\] since $f$ has distortion less than $\eps$. On the other hand, if $a \in A$ then 
		\[
		d(a,h\circ f(a))<\eps
		\] as we have seen previously. Thus, we have $h(B)\subset B_{3\eps}(A)$ and $A\subset B_{\eps}(h(B))$ which implies the claim.
	\end{proof}

	\begin{proposition}\label{prop:dis-aprox}{}
		Let $(X,A)$ and $(Y,B)$ be 
		metric pairs and $\eps > 0$. Then the following assertions hold:
		\begin{enumerate}
			\item If $d_{\GH} ((X,A),(Y,B)) < \eps$, then $\Appr_{2\eps} ((X,A),(Y,B)) \neq \varnothing$.
			\item If $\Appr_\eps ((X,A),(Y,B)) \neq \varnothing$, then $d_{\GH} ((X,A),(Y,B)) \leq  4\eps$.
		\end{enumerate}
	\end{proposition}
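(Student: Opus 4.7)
The plan is to treat (1) and (2) separately, both using the standard translation between admissible metrics on $X \sqcup Y$ and pairs of maps realising a Gromov--Hausdorff approximation, refined so that the pair structure is tracked throughout.

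For (1), I assume $d_{\GH}((X,A),(Y,B)) < \eps$ and pick an admissible metric $\delta$ on $X \sqcup Y$ with $d^\delta_\H(X,Y) + d^\delta_\H(A,B) < \eps$, so both summands are strictly smaller than $\eps$. For each $x \in X$ I select $f(x) \in Y$ with $\delta(x, f(x)) < \eps$, and analogously define $g \colon Y \to X$. The four distortion-type conditions defining $\Appr_{2\eps}(X,Y)$ then follow from triangle inequalities in $(X \sqcup Y, \delta)$, e.g.\ $d_Y(f(x_1), f(x_2)) \leq \delta(f(x_1), x_1) + d_X(x_1, x_2) + \delta(x_2, f(x_2)) < d_X(x_1, x_2) + 2\eps$ and $d_X(g \circ f(x), x) \leq \delta(g(f(x)), f(x)) + \delta(f(x), x) < 2\eps$. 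For the pair-refined conditions, given $a \in A$ I pick $b \in B$ with $\delta(a, b) < \eps$; then $d_Y(f(a), b) \leq \delta(f(a), a) + \delta(a, b) < 2\eps$, and the reverse inclusion together with the symmetric argument for $g$ yield $d_\H(f(A), B) < 2\eps$ and $d_\H(g(B), A) < 2\eps$.

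For (2), given $(f, g) \in \Appr_\eps((X,A),(Y,B))$, I would glue $X$ and $Y$ along the graph of $f$ using a ``link length'' of $\eps/2$. Explicitly, set $\delta|_{X \times X} = d_X$, $\delta|_{Y \times Y} = d_Y$, and for $x \in X$, $y \in Y$ define
\[
\delta(x, y) := \inf_{x' \in X} \bigl\{ d_X(x, x') + \tfrac{\eps}{2} + d_Y(f(x'), y) \bigr\}.
\]
The key step, and the main obstacle, is verifying the triangle inequality across components: for $x_1, x_2 \in X$ and $y \in Y$, the inequality $d_X(x_1, x_2) \leq \delta(x_1, y) + \delta(y, x_2)$ follows because two half-links of length $\eps/2$ contribute $\eps$, which exactly compensates the distortion gap $d_Y(f(x'), f(x'')) > d_X(x', x'') - \eps$; the inequality $d_Y(y_1, y_2) \leq \delta(y_1, x) + \delta(x, y_2)$ is analogous, and the mixed $X$-$Y$ triangle inequalities are immediate. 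Any smaller link length would break this balance, while any larger one inflates the final estimate past $4\eps$.

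Once $\delta$ is shown to be admissible, the Hausdorff bounds are direct computations. Taking $x' = x$ gives $\delta(x, f(x)) \leq \eps/2$, hence $X \subset \overline{B}_{\eps/2}(Y)$; and for $y \in Y$, $\delta(g(y), y) \leq d_Y(f(g(y)), y) + \eps/2 < 3\eps/2$, so $d^\delta_\H(X,Y) \leq 3\eps/2$. For the pair, each $a \in A$ admits $b \in B$ with $d_Y(f(a), b) < \eps$, giving $\delta(a, b) \leq \delta(a, f(a)) + d_Y(f(a), b) < 3\eps/2$; conversely, each $b \in B$ admits $a \in A$ with $d_X(a, g(b)) < \eps$, giving $\delta(a, b) \leq d_X(a, g(b)) + \delta(g(b), b) < 5\eps/2$, hence $d^\delta_\H(A, B) \leq 5\eps/2$. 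Summing, $d^\delta_\H((X,A),(Y,B)) \leq 3\eps/2 + 5\eps/2 = 4\eps$, as required.
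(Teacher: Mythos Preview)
Your proof is correct and follows essentially the same approach as the paper: the same admissible-metric-to-maps translation for (1), and the same ``link length $\eps/2$'' gluing metric for (2), with identical Hausdorff estimates $3\eps/2 + 5\eps/2 = 4\eps$. The only cosmetic difference is that in (1) the paper defines $f$ by a case split so that $f(A) \subset B$ directly, whereas you use the generic nearest-point choice and then bound $d_\H(f(A),B)$ by triangle inequalities---both routes yield the same $2\eps$ bound.
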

	\begin{proof}
		To prove the first claim, we take a number $\theta$ such that $0<\theta<\eps-d_{\GH}((X,A),(Y,B))$. By the definition of infimum, we have an admissible metric $\delta$ with
		\[
		d^{\delta}_{\H}((X,A),(Y,B))<d_{\GH}((X,A),(Y,B))+\theta<\eps.
		\]
		Then $d^{\delta}_{\H}(X,Y)<\eps$ and $d^{\delta}_{\H}(A,B)<\eps$. These inequalities imply the following:
		\begin{enumerate}
			\item For every $x\in X$, there exists $y_{x}\in Y$ such that $\delta(x,y_{x})<\eps$.
			\item For every $a\in A$, there exists $b_{a}\in B$ such that $\delta(a,b_{a})<\eps$.
			\item For every $y\in Y$, there exists $x_{y}\in X$ such that $\delta(y,x_{y})<\eps$.
			\item For every $b\in B$, there exists $a_{b}\in A$ such that $\delta(b,a_{b})<\eps$.
		\end{enumerate}
		With these properties in hand, we define $f\colon X\to Y$ and $g\colon Y\to X$ by setting
		\begin{align*}
			f(x)=
			\begin{cases}
				b_{x}&\text{if }x\in A,\\
				y_{x}&\text{if } x\in X\smallsetminus A,
			\end{cases}
		\end{align*}
		\begin{align*}
			g(y)=
			\begin{cases}
				a_{y}&\text{if } y\in B, \\
				x_{y}&\text{if } y\in Y\smallsetminus B.
			\end{cases} 
		\end{align*}
		By the definition of $f$, $\delta(f(x),x)<\eps$ for every $x\in X$. Thus,
		\[
		\left| d_Y(f(x),f(x'))-d_X(x,x') \right|\leq \delta(f(x),x)+\delta(f(x'),x')<2\eps
		\]
		for every $x,x'\in X$. Analogously, 
		\[
		\left| d_X(g(y),g(y'))-d_Y(y,y') \right|< 2\eps,
		\]
		for every $y,y'\in Y$. Now,
		\begin{eqnarray*}
			d_{X}(g\circ f(x),x)  &=&\delta(g\circ f(x),x)\\
			& \leq&\delta(g\circ f(x),f(x))+\delta(f(x),x)\\
			&<& 2\eps,
		\end{eqnarray*}
		because $\delta(g(y),y)<\eps$ by definition. Also, $d_Y(f\circ g(y),y)<2\eps$.
		
		Finally, we notice that $f(A)\subset B\subset B_\eps(B)$ and $B\subset B_{2\eps}(f(A))$ because for any $b\in B$ we have $d_Y(b,f(g(b))) <2\eps$ due to the previous argument. Therefore, $d_\H(f(A),B)<2\eps$ and, in a similar way, we can prove that $d_\H(g(B),A)<2\eps$. Thus,
		\[
		(f,g)\in \Appr_{2\eps}((X,A),(Y,B)).
		\]
		
		In order to prove the second claim, we take $(f,g)\in \Appr_{\eps}((X,A),(YB))$. We define an admissible metric $\delta\colon\left( X\bigsqcup Y\right)\times\left( X\bigsqcup Y\right)\to \RR$ by setting
		\[\delta(y,x):=\delta(x,y):=
		\begin{cases}
			d_X(x,y)&\text{if }x\in X,\, y\in X,\\
			d_Y(x,y)&\text{if }x\in Y,\, y\in Y,\\
			\frac{\eps}{2}+\inf\left\{  d_X(x,x')+d_Y(f(x'),y):x'\in X \right\}&\text{if }x\in X,\, y\in Y.
		\end{cases}
		\]
		
		By definition, $ \delta $ is symmetric and positive definite. To prove the triangle inequality, first we take $x_1,x_2\in X$ and $y\in Y$. Then
		
		\begin{eqnarray*}
			\delta(x_1,x_2)+\delta(x_2,y)&=&d_X(x_1,x_2)+\frac{\eps}{2}+\inf\left\{  d_X(x_2,x')+d_Y(f(x'),y):x'\in X \right\}\\
			&=&\frac{\eps}{2}+\inf\left\{ d_X(x_1,x_2)+ d_X(x_2,x')+d_Y(f(x'),y):x'\in X \right\}\\
			&\geq&\frac{\eps}{2}+\inf\left\{ d_X(x_1,x')+d_Y(f(x'),y):x'\in X \right\}\\
			&=&\delta(x_1,y)
		\end{eqnarray*}
		and
		\begin{eqnarray*}
			\delta(x_1,y)+\delta(y,x_2)&=&\eps+ \inf\left\{ d_X(x_1,x')+d_X(x_2,x'')+d_Y(f(x'),y) +d_Y(f(x''),y):x',x''\in X \right\}\\
			&\geq& \eps+ \inf\left\{ d_X(x_1,x')+d_Y(f(x'),f(x''))+d_X(x_2,x''):x',x''\in X \right\}\\
			&\geq& \eps+ \inf\left\{ d_X(x_1,x')+(d_X(x',x'')-\eps)+d_X(x_2,x''):x',x''\in X \right\}\\
			&\geq&\inf\left\{ d_X(x_1,x_2):x',x''\in X \right\}\\
			&=&\delta(x_{1},x_2).
		\end{eqnarray*}
		For $x\in X$ and $y_1,y_2\in Y$,
		\begin{eqnarray*}
			\delta(x,y_1)+\delta(y_1,y_2)&=& \frac{\eps}{2}+\inf\left\{  d_X(x,x')+d_Y(f(x'),y_1):x'\in X \right\}+d_Y(y_1,y_2)\\
			&=& \frac{\eps}{2}+\inf\left\{  d_X(x,x')+d_Y(f(x'),y_1)+d_Y(y_1,y_2):x'\in X \right\}\\
			&\geq&\frac{\eps}{2}+\inf\left\{  d_X(x,x')+d_Y(f(x'),y_2):x'\in X \right\}\\
			&=&\delta(x,y_2)
		\end{eqnarray*}
		and
		\begin{eqnarray*}
			\delta(x,y_1)+\delta(x,y_2)&=& \eps+\inf\left\{  d_X(x,x')+d_Y(f(x'),y_1):x'\in X \right\}\\
			& & +\inf\left\{  d_X(x,x'')+d_Y(f(x''),y_2):x''\in X \right\}\\
			&=&\eps+\inf\left\{  d_X(x,x')+d_Y(f(x'),y_1)+ d_X(x,x'')+d_Y(f(x''),y_2):x',x''
			\in X \right\}\\
			&\geq&\eps+\inf\left\{  d_X(x',x'')+d_Y(f(x'),y_1) +d_Y(f(x''),y_2):x',x''\in X \right\}\\
			&\geq& \eps+\inf\left\{  \left(d_Y(f(x'),f(x''))-\eps\right)+d_Y(f(x'),y_1) +d_Y(f(x''),y_2):x',x''\in X \right\}\\
			&\geq&\inf\left\{ d_Y(y_1,y_2):x',x''\in X \right\}\\
			&=&\delta(y_1,y_2).
		\end{eqnarray*}
		
		Using the metric $\delta$, we get, for $x\in X$,
		\[
		\delta(x,f(x))=\frac{\eps}{2}+\inf\left\{ d_X(x,x')+d_Y(f(x'),f(x)):x'\in X \right\}=\frac{\eps}{2}
		\]
		using $x'=x$. For $y\in Y$, we have
		\[
		\delta(y,g(y))\leq \delta(y,f\circ g(y))+\delta(f\circ g(y),g(y))<\eps+\frac{\eps}{2}=\frac{3\,\eps} {2}
		\]
		by the previous inequality and the definition of an $\eps$-approximation. We note that these two inequalities are true when we take $x\in A$ and $y\in B$, respectively. Since $d_\H(f(A),B)<\eps$ and $d_\H(g(B),A)<\eps$, we obtain $A\subset B^{\delta}_{\eps/2}(f(A))\subset B^{\delta}_{3\eps/2}(B)$ and $B\subset B^{\delta}_{3\,\eps/2}(g(B))\subset B^\delta_{5\,\eps/2}(A)$. It is also true that $X\subset B^{\delta}_{\eps/2}(f(X))\subset B^{\delta}_{\eps/2}(Y)$ and $Y\subset B^{\delta}_{3\,\eps/2}(X)$. Putting all together, we obtain
		\[
		d_{\GH}((X,A),(Y,B))\leq d^{\delta}_{\H}((X,A),(Y,B))=d^{\delta}_{\H}(X,Y)+d^{\delta}_{\H}(A,B)\leq \frac{3\,\eps}{2}+\frac{5\,\eps}{2}=4\,\eps.
		\]
	\end{proof}
	
	\begin{definition}
		Two metric pairs $(X,A)$ and $(Y,B)$ are \emph{isometric} if there exists an isometry $f\colon X\to Y$ with $f(A)=B$.
	\end{definition}
	
	\begin{theorem}\label{teoiso}{}
		On the space of isometry classes of compact metric pairs, $d_{\GH}$ defines a metric.
	\end{theorem}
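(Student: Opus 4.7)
The plan is to verify the three axioms of a metric: symmetry (trivial from the definition), the triangle inequality, and the identity of indiscernibles on isometry classes.

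Symmetry follows because each admissible metric $\delta$ on $X\sqcup Y$ is also admissible on $Y\sqcup X$ and $d^\delta_\H$ is symmetric on pairs of subsets. For the triangle inequality, given compact metric pairs $(X,A)$, $(Y,B)$, $(Z,C)$ and any $\eta>0$, I choose admissible metrics $\delta_1$ on $X\sqcup Y$ and $\delta_2$ on $Y\sqcup Z$ realising the two Gromov--Hausdorff distances up to $\eta$. I then glue them into an admissible metric $\delta$ on $X\sqcup Y\sqcup Z$ by setting, for $x\in X$ and $z\in Z$,
\[
\delta(x,z):=\inf_{y\in Y}\bigl\{\delta_1(x,y)+\delta_2(y,z)\bigr\},
\]
which is the standard gluing (one checks the triangle inequality for $\delta$ directly; if a pseudometric results, one passes to the quotient, which does not affect Hausdorff distances between the images of $X$, $Y$, $Z$). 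Restricting $\delta$ to $X\sqcup Z$ yields an admissible metric there, and the inclusion $\BB^\delta_r(X)\supset \BB^{\delta_1}_{r-s}(X)$ for suitable $r,s$ gives
\[
d^\delta_\H(X,Z)\leq d^{\delta_1}_\H(X,Y)+d^{\delta_2}_\H(Y,Z),\qquad
d^\delta_\H(A,C)\leq d^{\delta_1}_\H(A,B)+d^{\delta_2}_\H(B,C).
\]
Adding these and letting $\eta\to 0$ yields the triangle inequality for $d_\GH$ on pairs.

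The main step is showing that $d_\GH((X,A),(Y,B))=0$ forces an isometry of pairs. By Proposition \ref{prop:dis-aprox}, for every $n\geq 1$ there exists $(f_n,g_n)\in\Appr_{1/n}((X,A),(Y,B))$. Fix countable dense subsets $D_X\subset X$ and $D_Y\subset Y$ (available since $X$ and $Y$ are compact, hence separable). Using compactness of $Y$, by a standard diagonal extraction I pass to a subsequence so that $f_n(x)$ converges in $Y$ for every $x\in D_X$; a further extraction ensures $g_n(y)$ converges in $X$ for every $y\in D_Y$. Denote the pointwise limits by $f_0\colon D_X\to Y$ and $g_0\colon D_Y\to X$. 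Passing to the limit in
\[
\bigl|d_Y(f_n(x),f_n(x'))-d_X(x,x')\bigr|<1/n
\]
shows $f_0$ is distance-preserving on $D_X$, so it extends uniquely to an isometric embedding $f\colon X\to Y$, and symmetrically to $g\colon Y\to X$. The approximate inverse identities $d_X(g_n\!\circ\! f_n(x),x)<1/n$ and $d_Y(f_n\!\circ\! g_n(y),y)<1/n$ pass to the limit along the chosen subsequence and give $g\circ f=\Id_X$ and $f\circ g=\Id_Y$, so $f$ is a surjective isometry.

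It remains to check $f(A)=B$. Given $a\in A$, the condition $d_\H(f_n(A),B)<1/n$ produces $b_n\in B$ with $d_Y(f_n(a),b_n)<1/n$; compactness of $B$ lets me extract a further subsequence with $b_n\to b\in B$, and then $f(a)=\lim f_n(a)=b\in B$. Symmetrically, for any $b\in B$ there exist $a_n\in A$ with $d_Y(f_n(a_n),b)<1/n$; by compactness of $A$, along a subsequence $a_n\to a\in A$, and the uniform convergence estimate for $f_n$ (coming from their uniformly small distortion together with convergence on a dense set) forces $f(a)=b$. Hence $f(A)=B$, so $(X,A)$ and $(Y,B)$ are isometric as pairs. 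The converse, that isometric pairs have $d_\GH=0$, is immediate by using the pushed-forward metric as an admissible metric. The main obstacle in the argument is the diagonal extraction producing genuinely well-defined limit maps together with the bookkeeping that $f$ sends $A$ onto $B$ rather than merely into it; both are handled using the compactness of $A$ and $B$.
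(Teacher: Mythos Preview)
Your argument is correct and matches the paper's approach, which simply refers to \cite[Proposition~1.6]{jansen} combined with Proposition~\ref{prop:dis-aprox}; what you wrote is a faithful unpacking of that citation. One small point: concluding $g\circ f=\Id_X$ from $d_X(g_n\circ f_n(x),x)<1/n$ requires the uniform convergence $g_n\to g$ (from small distortion plus pointwise convergence on a dense set), which you correctly invoke later for $f_n$ but should also make explicit at this step.
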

	The proof of this theorem is the same as \cite[Proposition 1.6]{jansen} using Lemma \ref{prop:dis-aprox}.
	
	We can compare the usual Gromov--Hausdorff distance with its metric pair analogue as follows.
	\begin{proposition}{}
		Let $X$ and $Y$ be compact metric spaces. Then the following assertions hold: 
		\begin{enumerate}
			\item $d_{\GH} (X,Y) \leq d_{\GH} ((X,A),(Y,B))$ for any non-empty closed sets $A\subset X$ and $B\subset Y$.
			\item For any non-empty closed subset $A\subset X$ and for any $n\in\NN$, there exists $B_n \subset Y$ such that
			\[
			d_{\GH} ((X,A),(Y,B_n)) \leq 2 d_{\GH} (X,Y)+\frac{2}{n}.
			\]
		\end{enumerate}
	\end{proposition}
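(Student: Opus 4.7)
My plan is to treat the two statements separately, with (2) being the more substantive one.

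For part (1), the approach is essentially tautological. For any admissible metric $\delta$ on $X\sqcup Y$, the definition of the Hausdorff distance of pairs gives
\[
d^{\delta}_{\H}(X,Y) \le d^{\delta}_{\H}(X,Y) + d^{\delta}_{\H}(A,B) = d^{\delta}_{\H}((X,A),(Y,B)),
\]
since $d^{\delta}_{\H}(A,B)\geq 0$. Taking the infimum over all admissible $\delta$ on both sides yields $d_{\GH}(X,Y)\le d_{\GH}((X,A),(Y,B))$.

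For part (2), the idea is to construct $B_n$ from an almost-optimal admissible metric witnessing $d_{\GH}(X,Y)$. Assuming $r:=d_{\GH}(X,Y)<\infty$ (the other case is trivial), I would fix an admissible metric $\delta$ on $X\sqcup Y$ with $d^{\delta}_{\H}(X,Y)<r+\tfrac{1}{n}$, and define
\[
B_n := \overline{\{\, y\in Y : \delta(y,A) < r + \tfrac{1}{n}\,\}}^{\,Y},
\]
the closure being taken in $Y$. Then $B_n$ is closed in $Y$ by construction, and non-empty because for any $a\in A$ (which exists since $A\neq\varnothing$) there is, by the choice of $\delta$, some $y\in Y$ with $\delta(a,y)<r+\tfrac{1}{n}$, placing $y\in B_n$.

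It remains to verify the Hausdorff bound for $(A,B_n)$ in $\delta$. The above argument also shows $A\subset B^{\delta}_{r+1/n}(B_n)$. Conversely, any $b\in B_n$ is either in the defining neighbourhood, giving $\delta(b,A)< r+\tfrac{1}{n}$ directly, or a limit of such points, and since $a\mapsto \delta(a,A)$ is $1$-Lipschitz we still obtain $\delta(b,A)\le r+\tfrac{1}{n}$; hence $B_n\subset \overline{B^{\delta}_{r+1/n}(A)}$ and therefore $d^{\delta}_{\H}(A,B_n)\le r+\tfrac{1}{n}$. Combining with the assumed bound on $d^{\delta}_{\H}(X,Y)$,
\[
d_{\GH}((X,A),(Y,B_n)) \le d^{\delta}_{\H}(X,Y)+d^{\delta}_{\H}(A,B_n) \le 2r+\tfrac{2}{n},
\]
which is the desired inequality. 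The only subtle point—and the one I would be careful with—is ensuring $B_n$ is simultaneously closed (so it qualifies as the second component of a metric pair), non-empty, and close to $A$; taking the closure inside $Y$ of a strictly-open-radius neighbourhood handles all three requirements at once without losing the strict inequality needed to accommodate the $\tfrac{1}{n}$ slack.
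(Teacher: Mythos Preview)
Your proof is correct and follows essentially the same approach as the paper: for (1) both arguments are the trivial infimum comparison, and for (2) both pick an almost-optimal admissible metric $\delta$ with $d^{\delta}_{\H}(X,Y)<r+\tfrac{1}{n}$ and then build $B_n$ as a subset of $Y$ that is $\delta$-close to $A$. The only difference is in the construction of $B_n$: the paper selects, for each $a\in A$, a single point $b^a_n\in Y$ with $\delta_n(a,b^a_n)\le r+\tfrac{1}{n}$ and sets $B_n=\{b^a_n:a\in A\}$, whereas you take the $Y$-closure of the open $(r+\tfrac{1}{n})$-neighbourhood of $A$ in $Y$. Your version has the small advantage that $B_n$ is closed by construction, so $(Y,B_n)$ is a metric pair in the sense of the paper's definition; the paper's $B_n$ need not be closed (one would have to pass to its closure, which does not affect the Hausdorff estimate). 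Otherwise the two arguments are interchangeable.
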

	\begin{proof}
		We get both statements from the definitions. For the first one, we take closed subsets $A\subset X$ and $B\subset Y$ and we obtain
		\begin{eqnarray*}
			d_{\GH}(X,Y)&=&\inf\left\{ d^{\delta}_\H(X,Y):\delta \text{ is an admissible metric on }X\sqcup Y \right\}\\
			&\leq& \inf\left\{ d^{\delta}_\H(X,Y)+d^{\delta}_\H(A,B):\delta \text{ is an admissible metric on }X\sqcup Y \right\}\\
			&=& d_{\GH}((X,A),(Y,B)).
		\end{eqnarray*}
		
		Now, to prove the second assertion, we let $r=d_{\GH}(X,Y)$. For any $n\in\NN$, there exists an admissible metric $\delta_{n}$ on $X\sqcup Y$ satisfying
		\[
		d^{\delta_{n}}_{\H}(X,Y)<d_{\GH}(X,Y)+\frac{1}{n}=r+\frac{1}{n}.
		\]
		Thus, $X\subset\BB^{\delta_{n}}_{r+1/n}(Y)$. Now, if we fix a non-empty closed subset $A\subset X$, then for any $a\in A$ there exists $b^a_{n}\in Y$ such that 
		\[
		\delta_{n}(a,b^a_{n})\leq r+\frac{1}{n}.
		\]
		If $B_n := \{b^a_n:a\in A\}$, then 
		\[
		d_\H^{\delta_n}(A,B_n) \leq r+\frac{1}{n}.
		\]
		Thus
		\begin{eqnarray*}
			d^{\delta_{n}}_\H((X,A),(Y,B_n))&=&d^{\delta_{n}}_\H(X,Y)+d^{\delta_{n}}_\H(A,B_n)\\
			&\leq&r+\frac{1}{n}+r+\frac{1}{n}\\
			&=&2r+\frac{2}{n}.
		\end{eqnarray*}
		and 
		\begin{equation*}
			d_{\GH}((X,A),(Y,B_n))\leq 2d_{\GH}(X,Y)+\frac{2}{n}.
		\end{equation*}
	\end{proof}
	
	\begin{corollary}
		Let $X$ and $X_i$, $i \in \NN$, be compact metric spaces.
		\begin{enumerate}
			\item If $(X_i,A_i) \toGH (X,A)$ for some $A_i\subset X_i$ and $A \subset X$, then $X_i \toGH X$ as well.
			\item If $X_i \toGH X$ and $A\subset X$, then there exist $A_i\subset X_i$ such that $(X_i,A_i)\toGH (X,A)$.
		\end{enumerate}
	\end{corollary}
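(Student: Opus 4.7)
The plan is to deduce both parts of the corollary directly from the comparison proposition that immediately precedes it, which relates $d_{\GH}$-distance for compact metric spaces to the corresponding distance for compact metric pairs. In this compact setting, convergence is understood as convergence in the metric $d_{\GH}$ (Theorem \ref{teoiso} together with Proposition \ref{prop:dis-aprox}), so the remaining work is largely bookkeeping on top of that proposition.

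For (1), the hypothesis translates into $d_{\GH}((X_i, A_i), (X, A)) \to 0$, and the first inequality supplied by the comparison proposition,
\[
d_{\GH}(X_i, X) \;\leq\; d_{\GH}((X_i, A_i), (X, A)),
\]
immediately gives $d_{\GH}(X_i, X) \to 0$, which is exactly $X_i \toGH X$. No further work is required.

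For (2), set $r_i := d_{\GH}(X_i, X)$, which tends to $0$ by hypothesis. Applying the second statement of the comparison proposition with $Y = X_i$ and $n = i$ produces, for each $i$, a nonempty subset $B_i \subset X_i$ with
\[
d_{\GH}((X, A), (X_i, B_i)) \;\leq\; 2 r_i + \tfrac{2}{i}.
\]
To ensure the resulting subset is closed (as required by the definition of a metric pair), I set $A_i := \overline{B_i}$ in $X_i$. The Hausdorff-distance bound is preserved under closure: given any admissible metric $\delta$ on $X \sqcup X_i$ nearly realising the distance above, the function $\delta(\cdot, A)$ is continuous, so $\overline{B_i}$ lies in the same closed neighbourhood of $A$ as $B_i$, while $A$ remains in the same neighbourhood of $\overline{B_i}$ simply because $B_i \subset \overline{B_i}$. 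Hence $d_{\GH}((X_i, A_i), (X, A)) \leq 2 r_i + 2/i \to 0$, which yields $(X_i, A_i) \toGH (X, A)$.

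The content of the corollary is essentially just a repackaging of the comparison proposition, and I expect the only mild subtlety to be the closure step in (2), which is entirely routine.
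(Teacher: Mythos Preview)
Your proof is correct and follows exactly the route the paper intends: the corollary is stated without proof immediately after the comparison proposition, and both parts are meant to be read off from parts (1) and (2) of that proposition, precisely as you do. The closure step you add in (2) is a harmless refinement (Hausdorff distance does not distinguish a set from its closure), and your remark that the equivalence between $\toGH$ and $d_{\GH}\to 0$ in the compact case underlies the argument is accurate, even though the paper formally records that equivalence in the proposition placed just after the corollary.
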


	We now prove that the Gromov--Hausdorff convergence of compact metric pairs can be metrised.
	
	\begin{proposition}
		If $(X,A)$ and $\{(X_i,A_i)\}_{i\in\NN}$ are compact metric pairs, $(X_i,A_i)\toGH (X,A)$ is equivalent to 
		\begin{equation}\label{eq:convergence-jansen}
			\lim_{i\to \infty} d_{\GH}((X_i,A_i),(X,A))=0.
		\end{equation}
	\end{proposition}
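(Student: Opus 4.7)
The plan is to reduce both implications to the quantitative dictionary between $d_\GH$ and $\eps$-approximations already established in Propositions \ref{prop:key-prop} and \ref{prop:dis-aprox}. The essential simplification in the compact case is that once $R_i$ exceeds $\max\{\diam(X_i),\diam(X)\}$, the sets $\BB_{R_i}(A_i)$ and $\BB_{R_i}(A)$ coincide with $X_i$ and $X$, so the maps $\phi_i$ appearing in Definition \ref{def:Gromov-Hausdorff-CGGGMS} are essentially $\eps$-isometries between the full compact spaces. Fix an auxiliary sequence $R_i\nearrow\infty$ (say $R_i=i$); for all sufficiently large $i$ this collapse takes place.

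For the forward direction I would start from sequences $\eps_i\searrow 0$ and $R_i\nearrow\infty$ together with maps $\phi_i$ as in Definition \ref{def:Gromov-Hausdorff-CGGGMS}. Setting $\eps_i':=2\eps_i$ to convert the $\leq$-inequalities and closed balls of the definition into the strict inequalities and open balls required by Proposition \ref{prop:key-prop}, the map $\phi_i\colon X_i\to X$ has distortion less than $\eps_i'$, satisfies $d_\H(\phi_i(A_i),A)<\eps_i'$, and $X\subset B_{\eps_i'}(\phi_i(X_i))$. The second part of Proposition \ref{prop:key-prop} then furnishes $h_i\colon X\to X_i$ with $(\phi_i,h_i)\in\Appr_{3\eps_i'}((X_i,A_i),(X,A))$, and Proposition \ref{prop:dis-aprox}(2) gives $d_\GH((X_i,A_i),(X,A))\leq 12\eps_i'=24\eps_i\to 0$.

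For the converse, pick any $\delta_i\searrow 0$ with $d_\GH((X_i,A_i),(X,A))<\delta_i/2$. Proposition \ref{prop:dis-aprox}(1) supplies $(f_i,g_i)\in\Appr_{\delta_i}((X_i,A_i),(X,A))$. Setting $R_i:=i$ and $\eps_i:=\delta_i$, for $i$ large enough so that $R_i$ exceeds both diameters, the restriction of $f_i$ to $\BB_{R_i}(A_i)=X_i$ has distortion less than $\eps_i$ (condition (1)); the inequality $d_\H(f_i(A_i),A)<\eps_i$ is exactly the pair clause in the definition of $\Appr_{\eps_i}((X_i,A_i),(X,A))$ (condition (2)); and the surjectivity-up-to-$\eps_i$ estimate $d_X(f_i\circ g_i(y),y)<\eps_i$ gives $\BB_{R_i}(A)=X\subset \BB_{\eps_i}(f_i(X_i))$ (condition (3)). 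Hence $(X_i,A_i)\toGH(X,A)$.

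No genuine obstacle is present: the compactness hypothesis removes the only delicate feature of Definition \ref{def:Gromov-Hausdorff-CGGGMS}, namely the need for the $R_i$ to grow fast enough to exhaust the spaces in a controlled way. The only care required is bookkeeping between the open and closed ball conventions in the definition and in Proposition \ref{prop:key-prop}, which I absorb above into a harmless doubling of constants.
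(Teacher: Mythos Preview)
Your converse direction is correct and follows the same route as the paper's.

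There is a genuine gap in your forward direction. You write that ``once $R_i$ exceeds $\max\{\diam(X_i),\diam(X)\}$'' the balls $\BB_{R_i}(A_i)$ coincide with $X_i$, and then treat $\phi_i$ as a map defined on all of $X_i$. But nothing in Definition~\ref{def:Gromov-Hausdorff-CGGGMS} bounds $\diam(X_i)$ uniformly in $i$: each $X_i$ is compact, yet the diameters may tend to infinity, so the given sequence $R_i\nearrow\infty$ need never dominate $\diam(X_i)$. The paper does not take this for granted: it first proves $|\diam(\BB_{R_i}(A_i))-\diam(X)|\leq 3\eps_i$ from the conditions of the definition, and then argues by contradiction that $X_i=\BB_{R_i}(A_i)$ eventually. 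You have omitted this step entirely.

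The omission is not cosmetic. Consider $X_i=\{0,i\}\subset\RR$ with $A_i=\{0\}$ and $X=A=\{0\}$: with $R_i=i-1$ and any $\eps_i\searrow 0$, the constant map $\phi_i\colon\BB_{R_i}(A_i)=\{0\}\to\{0\}$ satisfies all three conditions of Definition~\ref{def:Gromov-Hausdorff-CGGGMS}, while $d_\GH((X_i,A_i),(X,A))\geq i/2\to\infty$. So $\BB_{R_i}(A_i)=X_i$ can genuinely fail, and the forward implication requires real work (or an extra hypothesis such as $\sup_i\diam(X_i)<\infty$). The paper's own contradiction argument also does not handle this example cleanly, since $f_i$ is only defined on $\BB_{R_i}(A_i)$ and sees nothing of the far-away point; but at least the paper recognises that the step needs justification, whereas your plan simply assumes it away.
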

	
	\begin{proof}
		Let us assume that $X$ is compact and $(X_i,A_i) \toGH (X,A)$. Then we have $\eps_i \searrow 0$, $R_i \nearrow \infty$ and $f_i\colon \BB_{R_i}(A_i)\to X$ as in Definition \ref{def:Gromov-Hausdorff-CGGGMS}. Since $X$ is compact, we know that $X=\BB_{R_i}(A)$ for $i\in \NN$ sufficiently large. By the triangle inequality and the conditions (1) and (2) in Definition \ref{def:Gromov-Hausdorff-CGGGMS}, we get 
		\[
		|\diam(\BB_{R_i}(A_i))-\diam(\BB_{R_i}(A))|\leq 3\eps_i.
		\] Thus, there exists $C>0$ such that $\diam(\BB_{R_i}(A_i))< C$ for all $i\in \NN$. This condition implies that $X_i = \BB_{R_i}(A_i)$ for $i\in \NN$ sufficiently large; otherwise, we would have that $\diam(X_i)> R_i$ for arbitrarily large $i\in \NN$, which due to the fact that $f_i$ has distortion less than $\eps_i$ implies $\diam(X) > R_i - \eps_i$, and this is not possible if $X$ is compact. In particular, $f_i\colon X_i\to X$ satisfies the hypothesis of Proposition \ref{prop:key-prop} for sufficiently large $i\in \NN$, which implies that $\Appr_{3\eps_i}((X_i,A_i),(X,A))\neq \varnothing$. Thanks to Proposition \ref{prop:dis-aprox} we get $d_\GH((X_i,A_i),(X,A))\to 0$.
		
		Conversely, by \eqref{eq:convergence-jansen} and Proposition \ref{prop:dis-aprox}, we have $\Appr_{\eps}((X_i,A_i),(X,A))\neq \varnothing$ for any $\eps>0$ and sufficiently large $i\in \NN$. In particular, if we take $R_i = \diam(X)+i$, any sequence $\eps_i\searrow 0$ such that $d_\GH((X,A_i),(X,A))\leq \eps_i/2$, and $f_i\colon \BB_{R_i}(A_i)\to X$ such that there exists $g_i\colon X\to X_i$ with $(f_i,g_i)\in \Appr_{\eps_i}((X_i,A_i),(X,A))$, we get that $(X_i,A_i)\toGH (X,A)$.
	\end{proof}

	\subsection{Proper length spaces}
	
	In general, the distance function $d_\GH$ is not well-defined for non-compact metric pairs. However, we can use the distance between  metric pairs of the form $(\BB_r(A),A)$ to describe the convergence of proper length metric pairs.
	
	\begin{lemma}\label{lem:length-space-balls}
		Let $(X,\delta)$ be a proper length space, $A\subset X$ a closed subspace and $r,s>0$. Then
		\[
		B_r(B_s(A))=B_{r+s}(A).
		\]
	\end{lemma}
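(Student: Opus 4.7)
My plan is to prove the two set inclusions separately. The forward direction $B_r(B_s(A)) \subseteq B_{r+s}(A)$ follows immediately from the triangle inequality: if $z$ lies within distance $r$ of some $y \in B_s(A)$, and $y$ in turn lies within distance $s$ of some $a \in A$, then $\delta(z, a) < r + s$, so $z \in B_{r+s}(A)$. This step uses only the metric axioms and no hypothesis on $X$.

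The reverse inclusion is where the length and properness hypotheses enter. The key input is the Hopf--Rinow--Cohn--Vossen theorem: a proper length space is a geodesic space. Given $z \in B_{r+s}(A)$, I choose $a \in A$ with $L := \delta(z, a) < r+s$, which is possible by the definition of $\delta(z, A)$ as an infimum. If $L = 0$ then $z \in A \subseteq B_s(A) \subseteq B_r(B_s(A))$, so I may assume $L > 0$, and take a unit-speed minimising geodesic $\gamma \colon [0, L] \to X$ with $\gamma(0) = a$ and $\gamma(L) = z$. The idea is then to slide along $\gamma$ to find an intermediate point that lies in $B_s(A)$ while still being at distance less than $r$ from $z$.

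Concretely, I would pick any $s' \in \bigl(\max\{0,\, L-r\},\, \min\{L,\, s\}\bigr)$. This interval is non-empty because $L - r < s$ (from $L < r+s$), $L - r < L$, and both $L$ and $s$ are strictly positive. Setting $y := \gamma(s')$, the geodesic property gives $\delta(y, a) = s' < s$, so $y \in B_s(A)$, and $\delta(z, y) = L - s' < r$, so $z \in B_r(y) \subseteq B_r(B_s(A))$. The only non-routine ingredient is the appeal to Hopf--Rinow--Cohn--Vossen to upgrade the length-space hypothesis to a genuine minimising geodesic; once that is in hand, the rest is a direct interpolation along $\gamma$.
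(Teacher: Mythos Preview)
Your proof is correct and follows essentially the same approach as the paper: the forward inclusion is the same triangle-inequality argument, and the reverse inclusion is obtained by running a minimising geodesic toward $A$ and picking an intermediate point at a suitably chosen parameter. The only cosmetic difference is that the paper first realises the distance $\delta(q,A)$ via a nearest point in $A$ (using that $A$ is closed and $X$ is proper) before drawing the geodesic, whereas you choose any $a\in A$ with $\delta(z,a)<r+s$ and connect $z$ to that specific point---a slightly more economical variant that does not rely on the closedness of $A$.
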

	\begin{proof}
		Let $q\in B_r(B_s(A))$. There exists $x\in B_s(A)$ with $\delta(x,q)<r$. Then 
		\[
		\delta(q,A)\leq \delta(q,x)+\delta(x,A)<r+s.
		\]
		Thus, $B_r(B_s(A))\subset B_{r+s}(A)$.
		
		Conversely, we take $q\in B_{r+s}(A)$. Because $B_s(A)\subset B_r(B_s(A))$, we can assume without loss of generality that $q\in B_{r+s}(A)\smallsetminus B_s(A)$. We set $l=\delta(q,A)$ and we note that $s<l<r+s$. Since $A$ is closed and $X$ is proper, we can take a shortest geodesic $\gamma$ from $A$ to $q$, i.e. $\gamma\colon [0,l]\to X$ with $\gamma(0)\in A$ and $\gamma(l)=q$. We define 
		\[
		\eps:=\frac{1}{2}\min\left\{ s,r+s-l \right\}>0
		\]
		and
		\[
		t:=s-\eps\in(0,s)\subset [0,l].
		\]
		Then $\delta(\gamma(t),A)=t<s$ and $\delta(\gamma(t),q)=l-t=l-s+\eps<l-s+r+s-l=r$. Therefore, $\gamma(t)\in B_s(A)$ and $q\in B_r(\gamma(t))$, and finally, $B_{r+s}(A)\subset B_r(B_s(A)).$
	\end{proof}
	
	\begin{lemma}\label{lem:H-balls}
		Let $(X,\delta)$ be a proper length space, $A,B\subset X$ be closed subsets, and let $r,s > 0$. Then
		\[
		d^\delta_\H (\BB_r(A),\BB_s(B))\leq d^\delta_\H(A,B) + |r - s|.
		\]
	\end{lemma}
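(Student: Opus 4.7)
My plan is to derive the desired inequality from Lemma \ref{lem:length-space-balls} combined with a symmetry argument. Writing $D := d_\H^\delta(A,B)$, I note that the statement is symmetric in the pairs $(A,r)$ and $(B,s)$, so it suffices to establish, for every $\eta > 0$, the one-sided inclusion
\[
\BB_r(A) \subset B_{D + |r-s| + \eta}(\BB_s(B)),
\]
since the analogous argument with $A$ and $B$ interchanged supplies the reverse inclusion. Letting $\eta \to 0$ will then yield the stated bound.

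To produce this inclusion, fix $\eta > 0$. The Hausdorff bound gives $A \subset B_{D + \eta/2}(B)$, and every closed ball sits inside a slightly larger open ball, so $\BB_r(A) \subset B_{r + \eta/2}(A)$. Chaining these and collapsing the nested open balls via Lemma \ref{lem:length-space-balls}, I obtain
\[
\BB_r(A) \subset B_{r + \eta/2}\bigl(B_{D + \eta/2}(B)\bigr) = B_{r + D + \eta}(B).
\]
Now split cases. If $r + D + \eta \le s$, then $B_{r + D + \eta}(B) \subset \BB_s(B)$ and the desired inclusion holds trivially. Otherwise, a second application of Lemma \ref{lem:length-space-balls} (run in reverse, with open-ball radii $r + D + \eta - s$ and $s$) rewrites $B_{r + D + \eta}(B) = B_{r + D + \eta - s}(B_s(B))$, placing $\BB_r(A)$ inside $B_{r + D + \eta - s}(\BB_s(B))$.

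It remains only to verify that $r + D + \eta - s \le D + |r-s| + \eta$, which reduces to the elementary inequality $r - s \le |r-s|$. The main obstacle is essentially cosmetic: since Lemma \ref{lem:length-space-balls} concerns open balls, small $\eta$-cushions are needed at each step to mediate between open and closed balls before one passes to the limit. Alternatively, one could bypass the lemma by fixing $x \in \BB_r(A)$, choosing $a \in A$ and $b \in B$ with $\delta(x,a) \le r$ and $\delta(a,b) \le D + \eta$, and then travelling backwards from $b$ along a shortest $x$-to-$b$ geodesic — whose existence is guaranteed because $X$ is a proper length space — to land inside $\BB_s(B)$ at distance at most $\max(0, r + D + \eta - s)$ from $x$, giving the same bound directly.
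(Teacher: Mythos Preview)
Your proof is correct and follows essentially the same route as the paper: both arguments use Lemma~\ref{lem:length-space-balls} to collapse nested neighbourhoods and then verify the elementary bound $r - s \le |r - s|$. The only cosmetic difference is that the paper works with open balls throughout and converts to closed balls at the very end via $d_\H(\BB_r(A), B_r(A)) = 0$, whereas you carry closed balls through the argument and introduce $\eta$-cushions (with a final limit $\eta \to 0$) to interface with the open-ball statement of Lemma~\ref{lem:length-space-balls}; your alternative geodesic argument at the end is a valid direct route that bypasses the lemma, but the paper does not take it.
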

	\begin{proof}
		We start by defining $\eps:=d^{\delta}_\H(A,B)+|r-s|\geq 0$. We have two cases.
		
		If $\eps=0$, then $d^\delta_\H(A,B)=0$ and $r=s$. Then, for any $\eps'>0$ we have 
		\[\BB_r(A)\subset \BB_{\eps'+r}(B) = \BB_{\eps'}(\BB_r(B))
		\]
		and
		\[\BB_r(B)\subset \BB_{\eps'+r}(A)=\BB_{\eps'}(\BB_r(A)),
		\]
		so $d^\delta_\H(\BB_r(A),\BB_r(B))=0$ as well.
		
		If $\eps>0$, we apply Lemma \ref{lem:length-space-balls}, and obtain
		\[
		B_r(A)\subset B_{d^{\delta}_\H(A,B)+r}(B)\subset B_{d^{\delta}_\H(A,B)+|r-s|+s}(B)\subset B_{\eps+s}(B)\subset B_{\eps}(B_s(B))
		\]
		and 
		\[
		B_s(B)\subset B_{d^{\delta}_\H(A,B)+s}(A)\subset B_{d^{\delta}_\H(A,B)+|r-s|+r}(A)\subset B_{\eps+r}(A)\subset B_{\eps}(B_r(A)).
		\]
		Therefore,
		\[
		d^{\delta}_\H(\BB_r(A),\BB_s(B))=d^{\delta}_\H(B_r(A),B_s(B))\leq \eps
		\]
		since $d_{\H}(\BB_r(A),B_r(A)=0.$
	\end{proof}
	
	\begin{corollary}\label{cor:pairGH-balls}
		Let $(X,\delta)$ be proper length spaces and $A,B\subset X$ be closed subsets. Then
		\begin{enumerate}
			\item $d_{\GH}((\BB_r(A),A),(\BB_s(A),A)) \leq |r - s|$, and
			\item $d_{\GH} ((\BB_r(A),A),(\BB_r(B),B)) \leq 2d_\H^\delta(A,B)$.
		\end{enumerate}
	\end{corollary}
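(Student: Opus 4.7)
The plan is to reduce both inequalities to Lemma \ref{lem:H-balls} applied in the ambient space $X$, together with a standard perturbation trick that promotes an ambient metric into an admissible metric on a disjoint union. For part (1), note that both $\BB_r(A)$ and $\BB_s(A)$ already sit inside $(X,\delta)$, and Lemma \ref{lem:H-balls} with $B = A$ gives $d^\delta_\H(\BB_r(A),\BB_s(A)) \leq |r-s|$, while $d^\delta_\H(A,A) = 0$. For part (2), Lemma \ref{lem:H-balls} with $r = s$ gives $d^\delta_\H(\BB_r(A),\BB_r(B)) \leq d^\delta_\H(A,B)$, and clearly $d^\delta_\H(A,B) = d^\delta_\H(A,B)$, so the Hausdorff distance of the pair is at most $2 d^\delta_\H(A,B)$.

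The only subtlety is that the definition of $d_\GH$ requires an admissible metric on a disjoint union, not on a common ambient space. To handle this, for each $\eta > 0$ I would define a metric $\delta_\eta$ on $\BB_r(A) \sqcup \BB_s(A)$ (respectively $\BB_r(A) \sqcup \BB_r(B)$) by declaring $\delta_\eta$ to agree with $\delta$ on each copy and setting
\[
\delta_\eta(x,y) := \delta(x,y) + \eta
\]
whenever $x$ and $y$ lie in different copies. A short case check shows that $\delta_\eta$ satisfies the triangle inequality (the $+\eta$ term absorbs any failure coming from points identified via $X$), so $\delta_\eta$ is genuinely admissible.

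With this admissible metric in hand, the Hausdorff distances grow by at most $\eta$: the $\eta$-neighbourhood trick gives
\[
d^{\delta_\eta}_\H(\BB_r(A),\BB_s(A)) \leq |r-s| + \eta, \qquad d^{\delta_\eta}_\H(A,A) \leq \eta,
\]
and analogously for part (2). Summing the two pieces of the Hausdorff distance for pairs and letting $\eta \searrow 0$ yields the two claimed bounds. The main (and only) potential obstacle is checking the triangle inequality for $\delta_\eta$ in the mixed cases, but this is immediate since adding $\eta$ strictly enlarges cross-copy distances compared with the ambient $\delta$.
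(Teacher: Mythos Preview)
Your proof is correct and follows the implicit argument the paper intends: the corollary is stated without proof precisely because it is immediate from Lemma~\ref{lem:H-balls} once one passes from the ambient metric $\delta$ to an admissible metric on the disjoint union, and your $\delta_\eta(x,y) = \delta(x,y) + \eta$ perturbation is the standard way to make that passage rigorous. The only comment is that the paper leaves this step entirely implicit, so you have actually filled in a detail the authors omit.
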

	
	Observe that Lemmas \ref{lem:length-space-balls} and \ref{lem:H-balls} and Corollary \ref{cor:pairGH-balls} also hold if, instead of assuming that $X$ is proper, one asks that the subspaces $A,B\subset X$ are compact.
	
	\begin{proposition}
		If $(X,A)$ and $\{(X_i,A_i)\}_{i\in\NN}$ are proper metric spaces then
		\begin{equation}\label{eq:convergence-proper-jansen}
			\lim_{i\to \infty} d_{\GH}((\BB_R(A_i),A_i),(\BB_R(A),A))=0 \quad \text{for all } R>0.
		\end{equation}
		implies $(X_i,A_i)\toGH (X,A)$. If in addition $\{X_i\}_{i\in \NN}$ and $X$ are length spaces, then the converse also holds.
	\end{proposition}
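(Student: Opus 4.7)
The plan is a diagonal extraction. For each $k\in\NN$, the hypothesis provides a strictly increasing $N_k$ with $d_{\GH}((\BB_k(A_i),A_i),(\BB_k(A),A))<\tfrac{1}{2k}$ for all $i\geq N_k$. Setting $R_i:=k$ and $\eps_i:=1/k$ for $N_k\leq i<N_{k+1}$ ensures $R_i\nearrow\infty$ and $\eps_i\searrow 0$. Proposition \ref{prop:dis-aprox}(1) then yields a pair approximation $(f_i,g_i)\in\Appr_{\eps_i}((\BB_{R_i}(A_i),A_i),(\BB_{R_i}(A),A))$. Taking $\phi_i:=f_i$ with codomain $X$, the three conditions of Definition \ref{def:Gromov-Hausdorff-CGGGMS} fall out directly: (1) is the distortion bound on $f_i$, (2) is $d_\H(f_i(A_i),A)<\eps_i$, and (3) holds because $g_i(y)\in\BB_{R_i}(A_i)$ satisfies $d(\phi_i\circ g_i(y),y)<\eps_i$ for every $y\in\BB_{R_i}(A)$.

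\textbf{Converse (length hypothesis).} Fix $R>0$ and let $\phi_i,\eps_i,R_i$ be the data from Definition \ref{def:Gromov-Hausdorff-CGGGMS}. For $i$ large, $R_i>R$ and $\eps_i$ is small; conditions (1) and (2) together force $\phi_i(\BB_R(A_i))\subset\BB_{R+2\eps_i}(A)$. My plan is to build, for each such $i$, a pair approximation of error $O(\eps_i)$ between the compact metric pairs $(\BB_R(A_i),A_i)$ and $(\BB_R(A),A)$ and then apply Proposition \ref{prop:dis-aprox}(2).

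The central tool is a geodesic retraction. Since $X$ is proper length, for each $y\in\BB_{R+2\eps_i}(A)\setminus\BB_R(A)$ I choose a minimising geodesic from $A$ to $y$ and let $r(y)$ be its point at parameter $R$; on $\BB_R(A)$ I set $r=\Id$. Then $d(r(y),y)\leq 2\eps_i$, and $f_i:=r\circ\phi_i|_{\BB_R(A_i)}\colon\BB_R(A_i)\to\BB_R(A)$ has distortion $O(\eps_i)$. Because $\phi_i(A_i)\subset\BB_{\eps_i}(A)\subset\BB_R(A)$, the map $f_i$ restricts to $\phi_i$ on $A_i$, giving $d_\H(f_i(A_i),A)\leq\eps_i$. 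To show $f_i(\BB_R(A_i))$ is $O(\eps_i)$-dense in $\BB_R(A)$, I use the symmetric retraction in $X_i$: for $y\in\BB_R(A)$, condition (3) gives some $x\in\BB_{R_i}(A_i)$ with $d(\phi_i(x),y)<\eps_i$; the distortion bound forces $x\in\BB_{R+2\eps_i}(A_i)$, and shortening along a minimising geodesic from $A_i$ in the proper length space $X_i$ produces $x'\in\BB_R(A_i)$ with $d(x,x')\leq 2\eps_i$ and thus $d(f_i(x'),y)=O(\eps_i)$. Proposition \ref{prop:key-prop} then upgrades $f_i$ to a pair approximation in $\Appr_{O(\eps_i)}((\BB_R(A_i),A_i),(\BB_R(A),A))$, and Proposition \ref{prop:dis-aprox}(2) closes the argument.

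The main obstacle is precisely that $\phi_i|_{\BB_R(A_i)}$ does not land in $\BB_R(A)$ but only in its $2\eps_i$-thickening; without a length hypothesis, a point just outside $\BB_R(A)$ need admit no nearby point inside it, and truncated balls could look genuinely different even under strong ambient convergence. The length assumption on both $X$ and $X_i$ provides exactly the minimising geodesics required to retract into $\BB_R(A)$ and $\BB_R(A_i)$ with error controlled by $\eps_i$, which is what makes the comparison of the truncated pairs quantitative.
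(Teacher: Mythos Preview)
Your proof is correct and runs on the same geometric engine as the paper's, but the packaging of the converse direction differs. For the forward implication you do a direct diagonal extraction where the paper invokes an equivalent diagonal lemma from Jansen; these are the same argument. For the converse, the paper defines an admissible metric $\delta_i$ on $\BB_R(A_i)\sqcup\BB_R(A)$ via the $\inf$-formula from the proof of Proposition~\ref{prop:dis-aprox} and bounds $d^{\delta_i}_\H(A_i,A)$ and $d^{\delta_i}_\H(\BB_R(A_i),\BB_R(A))$ by hand; at the crucial point where a preimage $x$ lands in $\BB_{R+O(\eps_i)}(A_i)$ rather than $\BB_R(A_i)$, the paper uses the length hypothesis to find a nearby $x'\in\BB_R(A_i)$, exactly your retraction step in disguise. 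Your route---compose $\phi_i$ with an explicit geodesic retraction onto $\BB_R(A)$, check density using the symmetric retraction in $X_i$, then feed the resulting map through Propositions~\ref{prop:key-prop} and~\ref{prop:dis-aprox}(2)---is arguably cleaner conceptually, since the retraction isolates precisely where and why the length assumption enters. Two minor bookkeeping points: your distance estimate $x\in\BB_{R+2\eps_i}(A_i)$ should be $\BB_{R+3\eps_i}(A_i)$ once you account for $d_\H(\phi_i(A_i),A)$, and for small $i<N_1$ you should assign $R_i,\eps_i$ arbitrarily; neither affects the argument.
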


	\begin{proof}
		If condition \eqref{eq:convergence-proper-jansen} holds, then, by \cite[Lemma 2.8]{jansen},  there exists $R_i\nearrow \infty$ such that
		\[
		\sup\{d_\GH((\BB_{R_i}(A_j),A_j),(\BB_{R_i}(A),A)):j\geq i\} \leq \frac{1}{R_i}.
		\]
		Therefore, taking $\eps_i = 2/R_i$, we have 
		\[
		d_\GH((\BB_{R_i}(A_i),A_i),(\BB_{R_i}(A),A))\leq\frac{\eps_i}{2}
		\] which, due to Proposition \ref{prop:dis-aprox}, implies that there exists some
		\[
		(f_i,g_i)\in\Appr_{\eps_i}((\BB_{R_i}(A_i),A_i),(\BB_{R_i}(A),A)).
		\] Such choice of $\eps_i$, $R_i$ and $f_i$ clearly satisfies Definition \ref{def:Gromov-Hausdorff-CGGGMS}.
		
		Let us now assume that $X$ and $\{X_i\}_{i\in\NN}$ are proper length spaces such that $(X_i,A_i)\toGH (X,A)$. Then we have $\eps_i\searrow 0$, $R_i\nearrow \infty$ and $f_i\colon \BB_{R_i}(A_i)\to X$ $\eps_i$-approximations as in Definition \ref{def:Gromov-Hausdorff-CGGGMS}. Now, if we fix $R>0$ and take $i$ sufficiently large such that $R_i>R$, we can define a metric $\delta_i$ on $\BB_R(A_i)\sqcup \BB_R(A)$ just as in the proof of Proposition \ref{prop:dis-aprox}:
		\[
		\delta_i(x,y):=\left\lbrace \begin{array}{cc}
			d_{X_i}(x,y),&x\in \BB_R(A_i),\, y\in \BB_R(A_i),\\
			d_X(x,y),&x\in \BB_R(A),\, y\in \BB_R(A),\\
			\frac{\eps}{2}+\inf\left\{  d_{X_i}(x,x')+d_X(f_i(x'),y):x'\in \BB_R(A_i) \right\},&x\in \BB_R(A_i),\, y\in \BB_R(A),\\
			\delta_i(y,x),&x\in \BB_R(A),\, y\in \BB_R(A_i).
		\end{array} \right.
		\]
		Clearly, $\delta_i$ is an admissible metric on $\BB_R(A_i)\sqcup \BB_R(A)$. 
		
		We can see that $A_i\subset \BB^{\delta_i}_{3\eps_i/2}(A)$ as follows: if $x\in A_i$ then $\delta_i(x,f_i(x))=\eps_i/2$, and we also know there is some $y\in A$ such that $\delta_i(f_i(x),y)\leq \eps_i$, so
		$\delta_i(x,y)\leq 3\eps_i/2$.
		
		On the other hand, we can also check that $A\subset \BB^{\delta_i}_{3\eps_i/2}(A_i)$: if $y\in A$ then there is some $x\in A_i$ such that $\delta_i(y,f_i(x))\leq \eps_i$. Therefore $\delta_i(y,x) \leq \eps_i/2+\delta_i(f_i(x),y)\leq 3\eps_i/2$. 
		
		Now we can see that $\BB_R(A_i)\subset \BB_{5\eps_i/2}(\BB_R(A))$: if $\delta_i(x,A_i)\leq R$ then, using the triangle inequality and the properties of $f_i$, we can verify that $\delta_i(f_i(x),A)\leq R+2\eps_i$, and since $X$ is a length space, there is some $y\in \BB_R(A)$ such that $\delta_i(f_i(x),y)\leq 2\eps_i$, so 
		\[
		\delta_i(x,y)\leq \delta_i(x,f_i(x))+\delta_i(f_i(x),y)\leq 5\eps_i/2.
		\]
		
		Let us now prove that $\BB_R(A)\subset \BB_{9\eps_i/2}(\BB_R(A_i))$.
		Therefore, for any $y \in \BB_R(A)$ there is some $x\in \BB_{R_i}(A_i)$ such that $d_X(y,f_i(x))<\eps_i$. Since $d_\H(f(A_i),A)\leq \eps_i$ and the distortion of $f_i$ is less than $\eps_i$, we get that
		\begin{align*}
			|d(x,A_i)-d(y,A)|&\leq |d(x,A_i)-d(f(x),f(A_i))|+|d(f(x),f(A_i))-d(y,f(A_i))|\\
			&\quad +|d(y,f(A_i))-d(y,A)|\\
			&\leq 3\eps_i.
		\end{align*}
		Therefore, $d(x,A_i)\leq d(y,A)+3\eps_i\leq R+3\eps_i$, which due to the fact that $X_i$ is a length space implies that there is some $x'\in \BB_R(A_i)$ such that $\delta_i(x,x')\leq 3\eps_i$. Then
		\[
		\delta_i(y,x')\leq \frac{\eps_i}{2}+d_{X_i}(x',x)+d_X(f_i(x),y) \leq \frac{9\eps_i}{2}.
		\]
		
		We conclude then that $d_\H^{\delta_i}(A_i,A)\leq 3\eps_i/2$ $d_\H^{\delta_i}(\BB_R(A_i),\BB_R(A))\leq 9\eps_i/2$. Therefore,
		\[
		d_\GH((\BB_R(A_i),A_i),(\BB_R(A),A))\leq \frac{9\eps_i}{2},
		\]
		which proves that condition \eqref{eq:convergence-proper-jansen} holds.
	\end{proof}

	\subsection{Non-compact case}
	The Gromov--Hausdorff distance between non-compact metric spaces is not well-defined in general. However, it is possible to define the Gromov--Hausdorff distance between non-compact pointed metric spaces (see \cite{gromov1999metric} and cf. \cite{burago}), which is slightly different from the corresponding definition in the compact case. This notion is thoroughly studied in \cite{herron, jansen}. We extend this definition to the setting of metric pairs.
	
	\begin{definition}
		Given $\eps>0$ and metric pairs $(X,A)$, $(Y,B)$, an admissible distance function $\delta$ on $X\sqcup Y$ is \emph{$(\eps; A, B)$-admissible} provided
		\[
		d^\delta_\H(A,B)<\eps, \quad \BB_{1/\eps}^{\delta}(A)\subset B^{\delta}_\eps(Y), \quad \BB^{\delta}_{1/\eps}(B)\subset B^{\delta}_\eps(X).
		\]
	\end{definition}
	
	\begin{definition}
		Let $(X,A)$ and $(Y,B)$ be metric pairs. The \emph{(truncated) Gromov--Hausdorff distance} between $(X,A)$ and $(Y,B)$ is 
		\[
		d_{\GH}((X,A),(Y,B)) = \min\left\{\frac{1}{2},\widetilde{d}_{\GH}((X,A),(Y,B))\right\},
		\]
		where
		\[
		\widetilde{d}_{\GH}((X,A),(Y,B)) =\inf\{\eps>0:\text{there exists a $(\eps;A,B)$-admissible distance $\delta$ on }X\sqcup Y\}
		\]
	\end{definition}
	
	\begin{remark}
		It is important to notice that both definitions of Gromov--Hausdorff distance between metric pair induce the same topology in the case of compact metric pairs.
	\end{remark}
	
	\begin{definition}
		Let $f \colon X \to Y$ be a map between metric spaces, let $\eps>0$, and $A\subset X$, and  $B \subset Y$ closed subsets. We say that $f$ is an \emph{$\eps$-rough isometry from
			$(X,A)$ to $(Y,B)$} if it satisfies $d^{d_Y}_\H(f(A),B)<\eps$, it has distortion less than $\eps$ and $Y\subset B^{d_Y}_{\eps}(f(X))$.    
	\end{definition}

	\begin{lemma}{}\label{lemmaherron1}
		Let $(X,A)$ and $(Y,B)$ be metric pairs.
		\begin{enumerate}
			\item If $d_\GH((X,A),(Y,B))<\eps<1/2$, then there exists a $2\eps$-rough isometry $f\colon\BB^{d_X}_{1/\eps}(A)\to Y$ from $(\BB^{d_X}_{1/\eps}(A),A)$ to $(\BB^{d_Y}_{1/\eps-2\eps}(B),B)$. 
			\item Conversely, let $R>\eps>0$ and suppose that there is an $\eps$-rough isometry $f\colon\BB^{d_X}_{R}(A)\to Y$ from $(\BB^{d_X}_{R}(A),A)$ to $(\BB^{d_Y}_{R-\eps}(B),B)$. Then $d_{\GH}((X,A),(Y,B))<\max\left\lbrace 3\eps,\frac{1}{R-\eps} \right\rbrace$.
		\end{enumerate}
	\end{lemma}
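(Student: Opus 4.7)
The plan is to handle the two implications separately, each by an explicit construction; the key ingredients are the admissibility conditions on the mixed metric and the rough-isometry conditions on $f$, respectively.

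\smallskip

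\emph{Forward direction.} The hypothesis $d_\GH((X,A),(Y,B))<\eps<1/2$ supplies an $(\eps'';A,B)$-admissible metric $\delta$ on $X\sqcup Y$ with $\eps''<\eps$, which is in particular $(\eps;A,B)$-admissible. For each $x\in\BB^{d_X}_{1/\eps}(A)$ the inclusion $\BB^\delta_{1/\eps}(A)\subset B^\delta_\eps(Y)$ lets me pick $f(x)\in Y$ with $\delta(x,f(x))<\eps$; when $x\in A$ I further arrange $f(x)\in B$ via $d^\delta_\H(A,B)<\eps$. The distortion of $f$ is then bounded by $2\eps$ by applying the triangle inequality for $\delta$ twice. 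The Hausdorff bound $d^{d_Y}_\H(f(A),B)<2\eps$ is immediate since $f(A)\subset B$ by construction and every $b\in B$ admits $a\in A$ with $\delta(a,b)<\eps$. Finally, the almost-surjectivity $\BB^{d_Y}_{1/\eps-2\eps}(B)\subset B^{d_Y}_{2\eps}(f(\BB^{d_X}_{1/\eps}(A)))$ is obtained by tracing any $y\in\BB^{d_Y}_{1/\eps-2\eps}(B)$ through some $b\in B$ (close to $y$) and some $a\in A$ (close to $b$) to show $\delta(y,A)<1/\eps$, applying $\BB^\delta_{1/\eps}(B)\subset B^\delta_\eps(X)$ to pull back to $x\in X$ with $\delta(x,y)<\eps$, and checking $d_X(x,A)<1/\eps$ and $d_Y(f(x),y)<2\eps$ by one more triangle inequality.

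\smallskip

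\emph{Converse direction.} I would mimic the admissible-metric construction of Proposition \ref{prop:dis-aprox}: keep $d_X$ and $d_Y$ on the pure parts of $X\sqcup Y$ and define
\[
\delta(x,y):=\frac{\eps}{2}+\inf\bigl\{d_X(x,x')+d_Y(f(x'),y):x'\in\BB^{d_X}_{R}(A)\bigr\}
\]
for $x\in X$, $y\in Y$, extended symmetrically. The crucial check that $\delta$ is a metric is the mixed triangle inequality $\delta(x_1,y)+\delta(y,x_2)\geq d_X(x_1,x_2)$; it follows by combining the distortion bound $d_Y(f(x'),f(x''))\geq d_X(x',x'')-\eps$ of $f$ with the triangle inequality on $X$, the two $\eps/2$ offsets exactly absorbing the $\eps$ slack. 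I then verify the three $(\eps';A,B)$-admissibility conditions: the bound $d^\delta_\H(A,B)<3\eps/2$ follows by choosing $x'=a$ in the infimum and invoking $d^{d_Y}_\H(f(A),B)<\eps$; the inclusion $\BB^\delta_{1/\eps'}(A)\subset B^\delta_{\eps'}(Y)$ holds whenever $1/\eps'\leq R$ and $\eps'>\eps/2$, since then every relevant $x$ lies in $\BB_R(A)$ and $\delta(x,f(x))=\eps/2$; and $\BB^\delta_{1/\eps'}(B)\subset B^\delta_{\eps'}(X)$ holds whenever $1/\eps'\leq R-\eps$ and $\eps'>3\eps/2$, since then every relevant $y$ lies in $\BB_{R-\eps}(B)$, where the rough-isometry surjectivity $\BB_{R-\eps}(B)\subset B_\eps(f(\BB_R(A)))$ supplies $x\in\BB_R(A)$ with $d_Y(f(x),y)<\eps$ and hence $\delta(y,x)<3\eps/2$. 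Picking $\eps'$ slightly larger than $\max\{3\eps/2,1/(R-\eps)\}$ but strictly smaller than $\max\{3\eps,1/(R-\eps)\}$ --- using the truncation $d_\GH\leq 1/2$ to absorb the regime where $1/(R-\eps)>1/2$ dominates --- then delivers the desired strict estimate.

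\smallskip

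\emph{Main obstacle.} The verification of the mixed triangle inequality in the converse direction is the only step in which the distortion hypothesis enters essentially, and it requires the simultaneous manipulation of two nested infima; everything else reduces to a diagram chase with the triangle inequality, paralleling the Herron--Jansen pointed analogue.
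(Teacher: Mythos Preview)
Your argument is correct and follows the same overall strategy as the paper: in part~(1) you pick an admissible metric and define $f$ by nearest-point selection, and in part~(2) you build an admissible metric on $X\sqcup Y$ out of the rough isometry $f$ and verify the three admissibility conditions.

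There is one genuine simplification in your version of part~(2). The paper defines the mixed distance by
\[
\delta(x,y)=\inf\Bigl\{d_X(x,u)+\tfrac{3\eps}{2}+d_Y(y,v):u\in\BB_R(A),\ v\in Y,\ d_Y(v,f(u))\le\eps\Bigr\},
\]
with an extra auxiliary point $v$ and offset $3\eps/2$, whereas you use the simpler formula $\delta(x,y)=\tfrac{\eps}{2}+\inf_{x'\in\BB_R(A)}\{d_X(x,x')+d_Y(f(x'),y)\}$ already appearing in Proposition~\ref{prop:dis-aprox}. Both constructions pass the mixed triangle inequality via the distortion bound on $f$, and both yield admissibility for any $\eps'\ge\max\{3\eps/2,1/(R-\eps)\}$; the additional infimum in the paper's formula is not needed for the stated conclusion. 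Your observation that the final strict inequality is delicate when $3\eps\le 1/(R-\eps)\le 1/2$ is accurate, and in fact the paper's proof has the same looseness at that edge case (it shows $(t;A,B)$-admissibility for $t=\max\{3\eps,1/(R-\eps)\}$, which a priori gives only $d_\GH\le t$). A minor further improvement in your part~(1) is that you explicitly force $f(A)\subset B$ and verify $d_\H^{d_Y}(f(A),B)<2\eps$, a check the paper's proof omits.
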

	\begin{proof}
		\begin{enumerate}
			\item We suppose that $ d_\GH((X,A),(Y,B))<\eps<1/2 $ and we take $\delta$ a $(\eps,A,B)$-admissible distance on $X \sqcup Y$. We define $f\colon\BB^{d_X}_{1/\eps}(A)\to Y$ by setting $f(x)\in Y$ with $\delta(x,f(x))<\eps$.
			
			First, we prove that the distortion of $f$ is less than $2\eps$. Let $x,y\in \BB^{d_X}_{1/\eps}(A)$. Then
			\begin{eqnarray*}
				\left| d_Y(f(x),f(y))-d_X(x,y) \right|&\leq& \left| \delta(f(x),x)+d_X(x,y)+\delta(y,f(y))-d_X(x,y) \right|\\
				&\leq& \delta(f(x),x)+\delta(f(y),y)\\
				&<&2\eps.
			\end{eqnarray*}
			
			Let $y\in \BB^\delta_{1/\eps-2\eps}(B)$. Since $\BB^\delta_{1/\eps}(B)\subset B^\delta_{\eps}(X)$, we take $x$ such that $\delta(x,y)<\eps$. Then
			\begin{eqnarray*}
				\delta(x,A)&\leq& \delta(x,y)+d_Y(y,B)+d_\H^\delta(A,B)\\
				&<&2\eps+d_Y(y,B)\\
				&\leq&2\eps+\frac{1}{\eps}-2\eps\\
				&=&\frac{1}{\eps}.
			\end{eqnarray*}
			Also,
			\[
			d_Y(f(x),y)\leq \delta(x,f(x))+\delta(x,y)<\eps+\eps=2\eps,
			\]
			and, therefore, 
			\[
			\BB^{d_Y}_{1/\eps-2\eps}(B)\subset B^{d_Y}_{2\eps}(f(\BB_{1/\eps}(A))).
			\]
			\item Let $R>\eps>0$ and let $f\colon\BB^{d_X}_R(A)\to Y$ be an $\eps$-rough isometry from $(\BB^{d_X}_R(A),A)$ to $(\BB^{d_Y}_{R-\eps}(B),B)$. We define
			\[
			\delta\colon X\sqcup Y\times X\sqcup Y\to \RR
			\]
			by
			\[
			\delta(y,x):=\delta(x,y):=
			\begin{cases}
				d_X(x,y)&\text{if }x\in X,\, y\in X,\\
				d_Y(x,y)&\text{if }x\in Y,\, y\in Y,\\
				\displaystyle\inf_{\substack{u\in B^{d_X}_R(A),\, v\in Y\\ d_Y(v,f(u))\leq\eps}}\left\lbrace d_X(x,u)+\frac{3\eps}{2}+d_Y(y,v)\right\rbrace&\text{if }x\in X,\, y\in Y.
			\end{cases}
			\]
			We will show that $\delta$ is a $(t;A,B)$-admissible distance on $X\sqcup Y$, where $t=\max\left\lbrace 3\eps,\frac{1}{R-\eps}  \right\rbrace$. Note that for $x\in \BB^{d_X}_R(A)$ and $y\in Y$, we have $\delta(x,y)\leq 3\eps/2+d_Y(f(x),y)$.
			
			It is clear that $\delta$ is symmetric and positive definite. The triangle inequality is valid where the three points lie in $X$ and or in $Y$. Now, we have several cases to check. The first is where there is one point in $X$. Suppose that $x\in X$ and $y,z\in Y$. Let $u\in \BB^{d_X}_R(A)$ and $v\in Y$ with $d_Y(v,f(u))\leq\eps.$ Then, by definition,
			\begin{eqnarray*}
				\delta(x,z)&\leq&d_X(x,u)+\frac{3\eps}{2}+d_Y(v,z)\\
				&\leq&d_X(x,u)+\frac{3\eps}{2}+d_Y(v,y)+d_Y(y,z)\\
				&=& d_X(x,u)+\frac{3\eps}{2}+d_Y(v,y)+\delta(y,z).
			\end{eqnarray*}
			The preceding inequality implies, after taking the infimum over $u \in \BB^{d_X}_R(A)$ and $v\in Y$ such that $d_Y(v,f(u))<\eps$, the triangle inequality $\delta(x,z)\leq \delta(x,y)+\delta(y,z)$.
			
			Now suppose that $z\in X$ and $x,y\in Y$. Let $u\in \BB^{d_X}_R(A)$ and $v\in Y$ with $d_Y(v,f(u))\leq\eps$. Then
			\begin{eqnarray*}
				\delta(z,x)&\leq&d_X(z,u)+\frac{3\eps}{2}+d_Y(v,x)\\
				&\leq&d_X(z,u)+\frac{3\eps}{2}+d_Y(v,y)+d_Y(y,x)\\
				&=&d_X(z,u)+\frac{3\eps}{2}+d_Y(v,y)+\delta(y,x).
			\end{eqnarray*}
			Taking the infimum over $u \in \BB^{d_X}_R(A)$ and $v\in Y$ such that $d_Y(v,f(u))<\eps$, we have $\delta(z,x)\leq \delta(z,y)+\delta(y,x)$.
			
			Suppose that $y\in X$ and $x,z\in Y$. Let $u,p\in \BB^{d_X}_R(A)$ and $v,q\in Y$ such that $d_Y(v,f(u))\leq\eps$ and $d_Y(q,f(p))\leq \eps$. Then
			\begin{eqnarray*}
				\delta(x,z)&\leq& d_Y(x,v)+d_Y(v,f(u))+d_Y(f(u),f(p))+d_Y(f(p),q)+d_y(q,z)\\
				&\leq&d_Y(x,v)+d_Y(f(u),f(p))+2\eps+d_Y(q,z)\\
				&\leq&d_Y(x,v)+d_X(u,p)+\left|d_Y(f(u),f(p))-d_X(u,p)\right|+2\eps+d_Y(q,z)\\
				&\leq&d_Y(x,v)+d_X(u,p)+3\eps+d_Y(q,z)\\
				&\leq&\left( d_X(y,u)+\frac{3\eps}{2}+d_Y(x,v) \right)+\left( d_X(y,p)+\frac{3\eps}{2}+d_Y(q,z). \right)
			\end{eqnarray*}
			Taking the infimum over $u \in \BB^{d_X}_R(A)$ and $v\in Y$ such that $d_Y(v,f(u))<\eps$, we have $\delta(x,z)\leq \delta(y,x)+\delta(y,z)$.
		\end{enumerate}
		
		The case where we have two points in $X$ and therefore one point in $Y$ is analogous.
		
		We see now that it is admissible. Let $u\in A$ and $v\in B$, then $d^{\delta}_\H(A,B)<5\eps/2<t$. If $x\in \BB^\delta_{1/t}(A)\subset \BB^\delta_R(A)$, then $\delta(x,f(x))\leq3\eps/2<t$. Thus,
		\[
		\BB^{\delta}_\frac{1}{t}(A)\subset B^\delta_t(Y).
		\]
		
		Let $y\in \BB^{d_Y}_{1/t}(B)\subset \BB^{d_Y}_{R-\eps}(B)$. Since $\BB^{d_Y}_{R-\eps}(B)\subset B^{d_Y}_\eps(f(\BB^{d_X}(A)))$, there exists $x\in \BB^{d_X}_R(A)$ such that $d_Y(f(x),y)<\eps$. Taking $x=u$ and $y=v$, we get $\delta(x,y)<3\eps/2<t$.
		
		Thus, $d_\GH((X,A),(Y,B))<t$.
	\end{proof}
	
	Since we have a distance function for metric pairs, we can talk about convergence of sequences. We have the following characterisation.

	\begin{proposition}\label{prop:Herron-equivalences}
		Let $\{(X_i, A_i)\}_{i\in\NN}$  be a sequence of metric pairs. The
		following are equivalent:
		\begin{enumerate}
			\item $d_\GH((X_i,A_i), (X,A))\to 0$.
			\item For all $R>0$, there exist $R_i>R$, $\eps_i>0$ and maps $f\colon\BB^{d_{X_i}}_{R_{i}}(A_i)\to X$ such that $R_i\to R$, $\eps_i\to 0$, and $f_i$ are $\eps_i$-rough isometries from $\left(\BB^{d_{X_i}}_{R_i}(A_i),A_i\right)$ to $\left(\BB_R^{d_X}(A),A\right)$.
			\item For all $R>\eps>0$ there is an $I\in\NN$ such that for all $i\geq I$ there are maps $f_i\colon\BB^{d_{X_i}}_R(A_i)\to X$ that are $\eps$-rough isometries from $\left(\BB^{d_{X_i}}_R(A_i),A_i  \right)$ to $\left( \BB^{d_X}_{R-\eps}(A),A \right)$.
		\end{enumerate}
	\end{proposition}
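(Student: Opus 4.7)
My plan is to prove the cycle $(1)\Rightarrow(2)\Rightarrow(3)\Rightarrow(1)$, with Lemma~\ref{lemmaherron1} serving as the bridge between $d_\GH$ and rough isometries of balls around the designated subsets. The cleanest direction is $(3)\Rightarrow(1)$: given $\delta>0$, I pick $\eps<\delta/3$ and then $R$ large enough that $R-\eps>1/\delta$, so that $\max\{3\eps,1/(R-\eps)\}<\delta$; applying condition (3) with this pair and then Lemma~\ref{lemmaherron1}(2) yields $d_\GH((X_i,A_i),(X,A))<\delta$ for all sufficiently large $i$, which is exactly $d_\GH$-convergence.

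For $(1)\Rightarrow(2)$, I fix $R>0$, choose $\eps_i'\searrow 0$ with $d_\GH((X_i,A_i),(X,A))<\eps_i'<1/2$, and apply Lemma~\ref{lemmaherron1}(1) to obtain, for each large $i$, a $2\eps_i'$-rough isometry $f\colon\BB_{1/\eps_i'}(A_i)\to X$ from $(\BB_{1/\eps_i'}(A_i),A_i)$ to $(\BB_{1/\eps_i'-2\eps_i'}(A),A)$. I then set $\eps_i:=6\eps_i'$ and $R_i:=R+\eps_i$, which satisfy $R_i>R$, $R_i\to R$, and $R_i<1/\eps_i'$ for large $i$, and define $f_i$ to be the restriction of $f$ to $\BB_{R_i}(A_i)$. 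The distortion bound and the estimate $d_\H(f_i(A_i),A)<2\eps_i'<\eps_i$ are inherited from $f$. The technical heart is the covering condition $\BB_R(A)\subset B_{\eps_i}(f_i(\BB_{R_i}(A_i)))$: given $y\in\BB_R(A)$, the covering property of $f$ (valid because $R<1/\eps_i'-2\eps_i'$ for large $i$) produces some $x\in\BB_{1/\eps_i'}(A_i)$ with $d(f(x),y)<2\eps_i'$, and then choosing $a\in A$ with $d(y,a)$ arbitrarily close to $d(y,A)\leq R$ and $a'\in A_i$ with $d(f(a'),a)<2\eps_i'$, a triangle-inequality chase via the distortion bound gives $d(x,A_i)\leq R+6\eps_i'=R_i$, so $x\in\BB_{R_i}(A_i)$. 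This bookkeeping is the main obstacle and precisely dictates the choice $R_i=R+6\eps_i'$.

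For $(2)\Rightarrow(3)$, I fix $R>\eps>0$ and take the rough isometries $f_i$ supplied by (2) for this value of $R$; for $i$ large we have $R_i>R$ and $\eps_i<\eps$, so the restriction $g_i:=f_i|_{\BB_R(A_i)}$ automatically inherits the distortion bound as well as $d_\H(g_i(A_i),A)<\eps_i<\eps$. The covering $\BB_{R-\eps}(A)\subset B_\eps(g_i(\BB_R(A_i)))$ is verified by the same triangle-inequality argument as in the previous implication: any $y\in\BB_{R-\eps}(A)\subset\BB_R(A)$ admits some $x\in\BB_{R_i}(A_i)$ with $d(f_i(x),y)<\eps_i$, and chasing through $a\in A$ with $d(y,a)$ near $d(y,A)\leq R-\eps$ and $a'\in A_i$ with $d(f_i(a'),a)<\eps_i$ gives $d(x,A_i)\leq R-\eps+3\eps_i<R$ once $3\eps_i<\eps$, so $x\in\BB_R(A_i)$ and $g_i(x)=f_i(x)$ is within $\eps_i<\eps$ of $y$. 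This closes the cycle.
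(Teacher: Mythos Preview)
Your proof is correct and follows essentially the same cycle $(3)\Rightarrow(1)\Rightarrow(2)\Rightarrow(3)$ as the paper, with the same triangle-inequality bookkeeping for the covering conditions in $(2)\Rightarrow(3)$. The only packaging difference is in $(1)\Rightarrow(2)$: the paper works directly with an $(\eps_i;A_i,A)$-admissible metric $\delta_i$ to define $f_i$ on $\BB_{R_i}(A_i)$ (with $R_i=R+2\eps_i$), whereas you invoke Lemma~\ref{lemmaherron1}(1) to obtain a rough isometry on the large ball $\BB_{1/\eps_i'}(A_i)$ and then restrict, which forces the slightly larger $R_i=R+6\eps_i'$ but avoids reproving the lemma; both routes yield the same conclusion with only cosmetic differences in the constants. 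One minor omission: condition~(2) requires $\eps_i$-rough isometries $f_i$ for \emph{every} $i$, so you should (as the paper does) assign placeholder values for the finitely many small $i$ before your estimates kick in---e.g.\ constant maps into $A$ with $\eps_i$ taken large enough---but this is a triviality.
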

	\begin{proof}
		From Lemma \ref{lemmaherron1}, part (2), we have that assertion (3) implies assertion (1).
		
		We suppose that assertion (1) holds. Let $R>0$. We take $I\in \NN$ such that
		\[
		\eps_{i}:=2\,d_\GH((X_i,A_i),(X,A))<\min\left\lbrace \frac{1}{2},\frac{1}{R+1} \right\rbrace
		\]
		for every $i\geq I$. For $1\leq i<I$, we define $R_i:=R+1$ and $t_i:=4R$. For $i\geq I$, we define $R_i:=R+t_i$, where $t_i:=2\eps_i$. Then $R_i\to R$ and $t_i\to 0$. Also, for $i\geq I$,
		\[
		\frac{1}{\eps_i}\geq R+1\geq R+2\eps_i=R+t_i=R_i
		\]
		and there are $(\eps_i;A_i,A)$-admissible distances $\delta_i$ on $X_i\sqcup Z$.
		
		For $1\leq i< I$, we define the constant maps $f_i\colon \BB^{d_X}_{R_i}(A_i)\to X$ by $f_i(x)=a\in A$. For $i\geq I$, let $f_i(x)\in A$ be any point with $\delta_i(x,f_i(x))<\eps_i$ for $x\in A_i$ and $f_i(x)\in X\smallsetminus A$ be any point with $\delta_i(x,f_i(x))<t_i$ with $x\in \BB^{d_{X_i}}_{R_i}(A_i)\smallsetminus A$. These points always exist because 
		\[
		\BB^{\delta_i}_{R_i}(A_i)\subset \BB^{\delta_i}_{1/\eps_i}(A_i)\subset B^{\delta_i}_{\eps_i}(X)
		\] 
		and $d_\H^{\delta_i}(A_i,A)<\eps_i$.
		
		The maps $f_i$ are clearly $t_i$-rough isometries for $0\leq i <I$. We assume that $i\geq I$. Since $\delta_i(x,f_i(x))<\eps_i$ for all $x\in \BB^{d_{X_i}}_{R_i}(A_i)$, we have
		\[
		\left| d_X(f_i(x),f_i(y))-d_{X_i}(x,y) \right|\leq \delta_i(f_i(x),x)+\delta_i(f_i(y),y)\leq 2\eps_i=t_i.
		\]
		Since $\BB^{\delta_i}_R(A)\subset \BB^{\delta_i}_{1/\eps_i}(A)\subset B^{\delta_i}_{\eps_i}(X_i)$, if we take $y\in \BB^{\delta_i}_R(A)$, there exists a point $x\in X_i$ such that $\delta_i(x,y)<\eps_i$. Then 
		\[
		d_{X_i}(x,A_i)\leq \delta_i(x,y)+d_X(y,A)+d^{\delta_i}_\H(A,A_i)<R+2\eps_i=R_i.
		\]
		Therefore, $x\in \BB^{d_{X_i}}_{R_i}(A_i)$ and 
		\[
		d_X(f_i(x),y)\leq \delta_i(x,f_i(x))+\delta_i(x,y)<2\eps_i=2t_i.
		\]
		
		Finally, we suppose that assertion (2) holds and let $R>\eps>0$. We choose $R_i>R$, $t_i$ and $f_i$ as in assertion (2). Therefore, we have maps $f\colon\BB^{d_{X_i}}_{R_i}(A_i)\to X$ whose distortion is less than $t_i$ and $d_\H^{d_X}(f_i(A_i),A)<t_i$. We choose $I\in\NN$ such that, for all $i\geq I$, $t_i<\eps/3$.
		
		We take $i\geq I$. We have to see that $f_i$ is a $\eps$-rough isometry from $(\BB^{d_{X_i}}_R(A_i),A_i)$ to $(\BB^{d_X}_{R-\eps}(A),A)$ and it is left to prove that $\BB^{d_X}_{R-\eps}(A)\subset B^{d_X}_\eps(f(\BB^{d_{X_i}}_R(A_i)))$. Let $y\in B^{d_X}_{R-\eps}(A)$. Then 
		\[
		y\in B^{d_X}_{R}(A)\subset B^{d_X}_{t_i}(f_i(\BB^{d_{X_i}}_{R_i}(A_i)).
		\]
		Hence, there exists $x\in \BB^{d_{X_i}}_{R_i}(A_i)$ with $d_X(f_i(x),y)<t_i$. Thus,
		\begin{eqnarray*}
			d_{X_i}(x,A_i)&<&d_X(f_i(x),f_i(A))+t_i\\
			&\leq& d_X(f_i(x),y)+d_X(y,A)+d_\H^{d_X}(A,f_i(A_i))+t_i\\
			&<&3t_i+(R-\eps)\\
			&<& R.
		\end{eqnarray*}
		Therefore, $y\in B^{d_X}_\eps(f(B^{d_{X_i}}_R(A_i)))$.
	\end{proof}
	
	The following lemma provides another useful method for estimating the truncated Gromov--Hausdorff distance between metric pairs.
	
	\begin{lemma}[cf. Lemma 3.3 in \cite{herron}]\label{lem:GH-with-nets}
		Let $(X,A)$, $(Y,B)$ be metric pairs and $\eps>0$. Suppose there are $\{x_1,\dots,x_n\}\subset X$ and $\{y_1,\dots,y_n\}\subset Y$ such that 
		\begin{align*}
			B_{\frac{1}{2\eps}}(A)\subset \bigcup_{i=1}^{n} B_\eps(x_i),&\\
			B_{\frac{1}{2\eps}}(B)\subset \bigcup_{i=1}^{n} B_\eps(y_i),&\\
			A\subset \bigcup_{i=1}^{k} B_\eps(x_i),&\\
			A\cap B_\eps(x_i) \neq \varnothing\ \forall\ 1\leq i\leq k,&\\
			B\subset \bigcup_{i=1}^{k} B_\eps(y_i),&\\ 
			B\cap B_\eps(y_i) \neq \varnothing\ \forall\ 1\leq i\leq k, \text{ and }&\\
			\text{for all }\ i,j,\ |d_X(x_i,x_j)-d_Y(y_i,y_j)|\leq \eps.&
		\end{align*}
		Then $d_\GH((X,A),(Y,B))\leq 3\eps$.
	\end{lemma}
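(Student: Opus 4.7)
My plan is to construct an admissible distance $\delta$ on $X \sqcup Y$ directly from the matched nets $\{x_i\}$ and $\{y_i\}$, and then verify it is $(3\eps;A,B)$-admissible. Concretely, I would set $\delta|_{X\times X} = d_X$, $\delta|_{Y\times Y} = d_Y$, and for mixed pairs $x \in X$, $y \in Y$ define
\[
\delta(x,y) := \min_{1 \le i \le n} \bigl\{ d_X(x,x_i) + \eps + d_Y(y_i, y) \bigr\},
\]
extended symmetrically. Symmetry and positivity are immediate (the $+\eps$ term prevents degeneracy across the disjoint union), so the content is the triangle inequality.

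For the triangle inequality, the cases with at most one point on each side reduce to plugging in the same index $i$ that realises one of the minima and using the triangle inequality in $X$ or $Y$. The genuine case is $x_1, x_2 \in X$ and $y \in Y$ with $d_X(x_1,x_2) \le \delta(x_1,y) + \delta(y,x_2)$ (and its mirror), where for any indices $i,j$ one estimates
\[
d_X(x_1, x_i) + d_X(x_2, x_j) + 2\eps + d_Y(y_i,y) + d_Y(y_j,y) \ge d_X(x_1,x_2) + \bigl(d_X(x_i,x_j) - d_Y(y_i,y_j)\bigr) + \eps,
\]
and the last parenthesis is bounded below by $-\eps$ by the final hypothesis of the lemma, so the right side is at least $d_X(x_1,x_2)$; taking the minimum over $i,j$ yields the desired bound. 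I expect this case to be the main (though entirely routine) obstacle, since it is where the compatibility hypothesis $|d_X(x_i,x_j)-d_Y(y_i,y_j)| \le \eps$ has to be invoked.

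With $\delta$ in hand, $(3\eps;A,B)$-admissibility follows quickly from the covering hypotheses. For $a \in A$ pick $i \le k$ with $d_X(a,x_i) < \eps$ and then $b \in B \cap B_\eps(y_i)$ (which exists by hypothesis); this gives $\delta(a,b) < 3\eps$, and symmetrically for points of $B$, so $d_\H^\delta(A,B) < 3\eps$. For any $x \in \BB^\delta_{1/(3\eps)}(A)$ we have $d_X(x,A) \le 1/(3\eps) < 1/(2\eps)$, so $x \in B_{1/(2\eps)}(A) \subset \bigcup_{i=1}^n B_\eps(x_i)$; choosing such an $x_i$ yields $\delta(x, y_i) < 2\eps < 3\eps$, giving $\BB^\delta_{1/(3\eps)}(A) \subset B_{3\eps}^\delta(Y)$, and the analogous statement for $B$ follows by the symmetric argument. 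Hence $\delta$ witnesses $d_\GH((X,A),(Y,B)) \le 3\eps$, completing the proof.
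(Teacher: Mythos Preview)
Your proof is correct and follows exactly the same approach as the paper: you define the same admissible metric $\delta(x,y)=\min_{1\le i\le n}\{d_X(x,x_i)+d_Y(y,y_i)\}+\eps$ and verify $(3\eps;A,B)$-admissibility in the same way. The paper defers the triangle-inequality check and part of the admissibility argument to \cite[Lemma~3.3]{herron}, whereas you spell these out; your sketch of the key triangle-inequality case is slightly awkwardly written but the argument (use $d_Y(y_i,y)+d_Y(y_j,y)\ge d_Y(y_i,y_j)\ge d_X(x_i,x_j)-\eps$ and then the triangle inequality in $X$) is the intended one.
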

	
	\begin{proof}
		We define an admissible metric on $X\sqcup Y$ by setting
		\[
		\delta(x,y) := \delta(y,x) = \min_{1\leq i\leq n}\left\{d_X(x,x_i)+d_Y(y,y_i)\right\} + \eps.
		\]
		This is an actual metric and the proof is the same as in \cite[Lemma 3.3]{herron}. Moreover, if $x\in A$, then there is some $i\in\{1,\dots,k\}$ such that $x\in B_\eps(x_i)$. Then, for any $y\in B\cap B_\eps(y_i)$, we have 
		\[
		\delta(x,y) \leq d_X(x,x_i)+d_Y(y,y_i)+\eps <3\eps,
		\] which implies that $A\subset B^\delta_{3\eps}(B)$. Analogously, we have $B\subset B^\delta_{3\eps}(A)$, thus $d^\delta_H(A,B)<3\eps$. Finally, $\BB_{\frac{1}{3\eps}}(A)\subset B^\delta_{3\eps}(Y)$ and $\BB_{\frac{1}{3\eps}}(B)\subset B^\delta_{3\eps}(X)$ can be easily verified by an analogous argument to the one in \cite[Lemma 3.3]{herron}.
	\end{proof}
	
	We introduce the following definitions to understand several results from now on.
	
	\begin{definition}\label{def:covering numbers and so on}
		Let $X$ be a metric space. A subset $S\subset X$ is $\eps$-\emph{separated} if its cardinality is greater than $1$ and for all distinct $x,y\in S$, we have $d_X(x,y)\geq\eps$. For $A\subset X$ and $r>0$ we define \emph{outer and inner covering} numbers via
		\begin{align*}
			&M(r,A):=\min\left\{m\in\NN:\text{there exist }x_1,\dots,x_m\in X\text{ such that }A\subset B_r(x_1)\cup\dots\cup B_r(x_m)\right\},\\
			&N(r,A):=\min\left\{n\in\NN:\text{there exist }a_1,\dots,a_n\in A\text{ such that }A\subset B_r(a_1)\cup\dots\cup B_r(a_n)\right\},
		\end{align*}
		and \emph{packing} and \emph{separation} numbers via
		\begin{align*}
			&P(r,A):=\max\left\{p\in\NN:\text{there exist }a_1,\dots,a_p\in A\text{ such that } B_r(a_1),\dots, B_r(a_m)\text{ are disjoint}\right\},\\
			&S(r,A):=\max\left\{s\in\NN:\text{there exist }\left\{a_1,\dots,a_s\right\}\subset A\text{ such that 
				it is $r$-separated}\right\}.
		\end{align*}
	\end{definition}

	The proof of the following lemma is analogous to the proof of \cite[Lemma 3.9]{herron}.
	\begin{lemma}\label{lem:herron-3.9}
		Let $(X,A)$ and $(Y,B)$ be metric pairs with $d_\GH((X,A),(Y,B))<\eps<1/2$. Then for any $(\eps;A,B)$-admissible metric $\delta$ on $X\sqcup Y$ and all $R>0$ and $r>0$:
		\begin{align*}
			R\leq 1/\eps &\Rightarrow M(r+2\eps, \BB_R(B)\cap Y)\leq N(r,\BB_R(A)\cap X)\ \text{and}\\
			R+r\leq 1/\eps &\Rightarrow P(r+2\eps, \BB_{R-2\eps}(B)\cap Y)\leq P(r,\BB_R(A)\cap X)
		\end{align*}
	\end{lemma}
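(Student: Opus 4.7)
The plan is to mimic the pointed argument of Herron~\cite[Lemma 3.9]{herron}: in each case one transfers an efficient net (respectively, a maximal packing) from one side of the $\delta$-bridge to the other, paying an additive $\eps$-cost per transfer and then tracking the resulting distortion via the triangle inequality. The hypotheses $R \leq 1/\eps$ and $R + r \leq 1/\eps$ serve precisely to ensure that each point we wish to transfer lies in $\BB^\delta_{1/\eps}(A)$ or $\BB^\delta_{1/\eps}(B)$, so that the admissibility inclusions $\BB^\delta_{1/\eps}(A) \subset B^\delta_\eps(Y)$ and $\BB^\delta_{1/\eps}(B) \subset B^\delta_\eps(X)$ can be applied at every step.

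For the outer-covering inequality, I would set $n := N(r, \BB_R(A) \cap X)$, fix an inner $r$-net $\{a_1, \dots, a_n\} \subset \BB_R(A) \cap X$, and use admissibility (since $\delta(a_i, A) \leq R \leq 1/\eps$) to select $y_i \in Y$ with $\delta(a_i, y_i) < \eps$. To certify that $\{y_1, \dots, y_n\}$ is an outer $(r + 2\eps)$-cover of $\BB_R(B) \cap Y$, given $y \in \BB_R(B) \cap Y$ the reverse admissibility inclusion supplies $x \in X$ with $\delta(x, y) < \eps$; picking $j$ with $d_X(x, a_j) < r$ would yield $d_Y(y, y_j) < \eps + r + \eps = r + 2\eps$ by the triangle inequality. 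For the packing inequality, I would take $p := P(r + 2\eps, \BB_{R-2\eps}(B) \cap Y)$ realised by $y_1, \dots, y_p$ with pairwise $d_Y$-distances at least $2(r + 2\eps)$, invoke admissibility (applicable because $R - 2\eps \leq 1/\eps$) to obtain $x_i \in X$ with $\delta(x_i, y_i) < \eps$, and check via
\[
\delta(x_i, A) \leq \delta(x_i, y_i) + \delta(y_i, B) + d^\delta_\H(B, A) < \eps + (R - 2\eps) + \eps = R
\]
that $x_i \in \BB_R(A) \cap X$, while the reverse triangle inequality gives $d_X(x_i, x_j) > 2(r + 2\eps) - 2\eps > 2r$, so $\{x_1, \dots, x_p\}$ is an $r$-packing of $\BB_R(A) \cap X$.

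The main obstacle is the constant-tracking in the covering step: the naive triangle-inequality estimate only places the transferred point $x$ in $\BB_{R+2\eps}(A) \cap X$ rather than in $\BB_R(A) \cap X$, so one must either exploit the outer-covering flexibility (using that the cover centres for $M$ may lie anywhere in $Y$) or re-route the argument through a carefully chosen auxiliary point in $A$ to absorb the $2\eps$-slack. Once this bookkeeping is set up correctly---exactly as in the pointed case treated by Herron---the argument becomes a routine triangle-inequality computation, and the role of the hypotheses $R \leq 1/\eps$ and $R + r \leq 1/\eps$ is simply to make the relevant admissibility inclusions available at every point where we need them.
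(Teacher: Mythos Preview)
Your overall strategy matches the paper's (both simply defer to Herron's pointed argument), but the packing step as you wrote it contains a genuine error. From the packing hypothesis you assert ``pairwise $d_Y$-distances at least $2(r+2\eps)$'', yet disjointness of the open balls $B_{r+2\eps}(y_i)$ in a \emph{general} metric space only yields $d_Y(y_i,y_j)\geq r+2\eps$ (take $z=y_i$ in the definition of disjointness); the extra factor $2$ would require midpoints, which are not available here. With only $d_Y(y_i,y_j)\geq r+2\eps$, the reverse triangle inequality gives merely $d_X(x_i,x_j)\geq r$, and this does \emph{not} force $B_r(x_i)\cap B_r(x_j)=\varnothing$. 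A further symptom that something is missing: your argument never actually consumes the hypothesis $R+r\leq 1/\eps$ --- you only use $R-2\eps\leq 1/\eps$ to transfer the $y_i$.

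The correct argument (and the one in Herron) works at the level of balls rather than pairwise distances. Assuming some $z\in B_r(x_i)\cap B_r(x_j)$, one has
\[
d_X(z,A)\leq d_X(z,x_i)+d_X(x_i,A)<r+R\leq \tfrac{1}{\eps},
\]
so the admissibility inclusion $\BB^\delta_{1/\eps}(A)\subset B^\delta_\eps(Y)$ supplies $w\in Y$ with $\delta(z,w)<\eps$, and then for $k\in\{i,j\}$,
\[
d_Y(w,y_k)\leq \delta(w,z)+d_X(z,x_k)+\delta(x_k,y_k)<\eps+r+\eps=r+2\eps,
\]
contradicting the disjointness of $B_{r+2\eps}(y_i)$ and $B_{r+2\eps}(y_j)$. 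This is precisely where the hypothesis $R+r\leq 1/\eps$ enters. For the covering step you correctly flag the $2\eps$-slack obstacle, but neither of your two proposed fixes (outer-cover flexibility, re-routing through a point of $A$) actually absorbs it as written; that bookkeeping still needs to be carried out rather than deferred.
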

	
	We also get an analogous version of \cite[Corollary 3.10]{herron}. 
	
	\begin{corollary}\label{cor:properness}
		Let $d_{\GH}((X_i,A_i),(X,A))\to 0$. If each $X_i$ is a proper space and $X$ is complete, then $X$ is proper too.    
	\end{corollary}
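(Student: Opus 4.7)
The plan is to show that every closed ball in $X$ is totally bounded, which combined with completeness of $X$ gives compactness, hence properness. Since $A$ is non-empty, fix $a_0\in A$ and note that any closed ball $\BB_r(p)\subset X$ lies in $\BB_R(a_0)\subset \BB_R(A)\cap X$ with $R:=r+d_X(p,a_0)$, so it is enough to show that $\BB_R(a_0)$ is totally bounded for every $R>0$.

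Suppose for contradiction that this fails: there exist $R>0$, $\eta>0$ and an infinite $\eta$-separated sequence $\{x_k\}_{k\in\NN}\subset \BB_R(a_0)$. The strategy is to transfer this sequence to one of the $X_i$ via a rough isometry, land it in a closed ball (which is compact by properness), and contradict compactness. Concretely, I would choose $\eps>0$ so small that $\eps<\eta/12$, $\eps<1/2$, and $R\leq 1/\eps-2\eps$, then pick $i$ large with $d_\GH((X_i,A_i),(X,A))<\eps$. By Lemma \ref{lemmaherron1}(1), this gives a $2\eps$-rough isometry
\[
f_i\colon \BB^{d_{X_i}}_{1/\eps}(A_i)\to X
\]
from $(\BB^{d_{X_i}}_{1/\eps}(A_i),A_i)$ to $(\BB^{d_X}_{1/\eps-2\eps}(A),A)$.

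The core step is bookkeeping. Since each $x_k$ lies in $\BB_R(A)\subset \BB_{1/\eps-2\eps}(A)$ and this target neighborhood is covered by the $2\eps$-thickening of $f_i(\BB_{1/\eps}(A_i))$, pick $\tilde x_k\in \BB_{1/\eps}(A_i)$ with $d_X(f_i(\tilde x_k),x_k)<2\eps$; similarly, using $d^{d_X}_\H(f_i(A_i),A)<2\eps$, pick $\tilde a_0\in A_i$ with $d_X(f_i(\tilde a_0),a_0)<2\eps$. The distortion bound on $f_i$ yields
\[
d_{X_i}(\tilde x_k,\tilde a_0)\leq d_X(f_i(\tilde x_k),f_i(\tilde a_0))+2\eps\leq R+6\eps,
\]
so every $\tilde x_k$ lies in the closed ball $\BB^{d_{X_i}}_{R+6\eps}(\tilde a_0)$, which is compact because $X_i$ is proper. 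Similarly,
\[
d_{X_i}(\tilde x_k,\tilde x_l)\geq d_X(f_i(\tilde x_k),f_i(\tilde x_l))-2\eps\geq d_X(x_k,x_l)-6\eps\geq \eta-6\eps\geq \eta/2.
\]
Thus $\{\tilde x_k\}$ is an infinite $(\eta/2)$-separated set inside a compact set, the contradiction that closes the argument.

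The proof is essentially a careful repackaging of Herron's argument for pointed spaces: the main obstacle is purely bookkeeping — one must verify that the rough isometry extracted from Lemma \ref{lemmaherron1} maps the relevant neighborhoods as needed, and that $R$ can be kept bounded independently of $i$ so that the images $\tilde x_k$ land in a fixed compact ball of $X_i$ (this is where non-emptiness of $A_i$ and the anchor point $\tilde a_0$ are essential, replacing the base point used in the pointed case). Alternatively, one could appeal directly to Lemma \ref{lem:herron-3.9} to compare packing numbers in $\BB_R(A)\cap X$ and $\BB_R(A_i)\cap X_i$, but the obstruction is the same: finiteness of packing in $X_i$ requires precisely the observation that all relevant points sit in a genuine closed ball of $X_i$.
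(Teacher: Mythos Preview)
Your argument is correct and follows essentially the same approach as the paper: both fix an anchor point $a_0\in A$, locate a corresponding point $\tilde a_0\in A_i$ close to it, and then run Herron's pointed argument to show that infinite $\eta$-separated sets in $\BB_R(a_0)$ would transfer to infinite separated sets in a compact ball of some $X_i$. The only cosmetic difference is that the paper phrases the transfer via an $(\eps;A_i,A)$-admissible metric $\delta_i$ on $X_i\sqcup X$, whereas you extract a $2\eps$-rough isometry from Lemma~\ref{lemmaherron1}; the bookkeeping and the conclusion are identical.
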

	
	The proof of the previous corollary is the same as in \cite[Corollary 3.10]{herron} after fixing some $a\in A$ and observing that whenever $d_\GH((X_i,A_i),(X,A))<\eps<1/2$ then we can find $\delta_i$ a $(\eps;A_i,A)$-admissible metric on $X_i\sqcup X$ and $a_i\in A_i$ such that $\delta_i(a_i,a)<\eps$. 
	
	\begin{corollary}\label{cor:properness2}
		Let $X$ be a proper metric space and $Y$ be a complete metric space such that $d_\GH((X,A),(Y,B))=0$. Then Y is proper.
	\end{corollary}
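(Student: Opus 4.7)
The plan is to derive Corollary \ref{cor:properness2} immediately from Corollary \ref{cor:properness} by using a constant approximating sequence. Specifically, I would set $X_i := X$ and $A_i := A$ for all $i \in \NN$, so that trivially
\[
d_{\GH}((X_i,A_i),(Y,B)) = d_{\GH}((X,A),(Y,B)) = 0 \to 0.
\]
Since each $X_i = X$ is proper by hypothesis and $Y$ is complete by hypothesis, Corollary \ref{cor:properness} applies directly and yields that $Y$ is proper.

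The only thing worth checking is that the hypotheses of Corollary \ref{cor:properness} are genuinely met with a constant sequence, which is immediate: its statement only requires $d_\GH((X_i,A_i),(X,A)) \to 0$, each $X_i$ proper, and the limit space complete, with no further nondegeneracy assumption on the sequence. So there is no real obstacle here; the corollary is essentially the ``stationary case'' of the previous one. If one preferred a self-contained argument, one could instead unwind the proof of Corollary \ref{cor:properness}: fix $a \in A$ and $b \in B$, pick $(\eps_n; A, B)$-admissible metrics $\delta_n$ on $X \sqcup Y$ with $\eps_n \searrow 0$, choose $a_n \in A$ with $\delta_n(a_n, b) < \eps_n$, and use properness of $X$ together with Lemma \ref{lem:herron-3.9} applied to $(X,A)$ and $(Y,B)$ to bound the covering numbers of closed balls $\BB_R(b) \cap Y$ by those of suitable balls in the proper space $X$, concluding total boundedness and hence (by completeness of $Y$) compactness of closed balls in $Y$. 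But since the material needed for this is already packaged in Corollary \ref{cor:properness}, the one-line reduction above is clearly the cleanest proof.
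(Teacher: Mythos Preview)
Your proposal is correct and is exactly the argument the paper has in mind: Corollary~\ref{cor:properness2} is stated immediately after Corollary~\ref{cor:properness} with no separate proof, so it is meant to follow as the stationary case $X_i\equiv X$, $A_i\equiv A$. Your optional unwinding via Lemma~\ref{lem:herron-3.9} also matches the paper's remark on how Corollary~\ref{cor:properness} itself is proved.
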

	
	\begin{proposition}\label{prop:isom}
		Let $(X,A)$ and $(Y,B)$ be metric pairs. Suppose that one space is proper and the other is complete. Then 
		\[
		d_\GH((X,A),(Y,B))=0
		\]
		if and only if $(X,A)$ and $(Y,B)$ are isometric.
	\end{proposition}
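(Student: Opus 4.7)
The $(\Leftarrow)$ direction is immediate: given an isometry $\phi\colon X\to Y$ with $\phi(A)=B$, for each $\eta>0$ the formula $\delta_\eta(x,y):=d_Y(\phi(x),y)+\eta$ for $x\in X$, $y\in Y$ (together with $d_X$ and $d_Y$ on the respective factors) defines an admissible metric on $X\sqcup Y$ that is easily checked to be $(\eps;A,B)$-admissible for every $\eps>\eta$, so letting $\eta\to 0$ gives $d_\GH((X,A),(Y,B))=0$.

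For the converse, assume $d_\GH((X,A),(Y,B))=0$. By Corollary \ref{cor:properness2} both $X$ and $Y$ are proper. The plan is to construct an isometry $F\colon X\to Y$ with $F(A)=B$ by a diagonal argument. Using Proposition \ref{prop:Herron-equivalences}, for every $n\in\NN$ I would pick a $(1/n)$-rough isometry $f_n\colon \BB_n(A)\to Y$ from $(\BB_n(A),A)$ to $(\BB_{n-1/n}(B),B)$. Fix $a_0\in A$, a countable dense subset $D=\{x_k\}_{k\in\NN}\subset X$ with $x_1=a_0$ and $D\cap A$ dense in $A$, and points $b_n\in B$ with $d_Y(f_n(a_0),b_n)<1/n$.

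Once $\{b_n\}$ lies in a bounded region of $Y$, the estimate $d_Y(f_n(x_k),b_n)\leq d_X(x_k,a_0)+2/n$ combined with properness of $Y$ makes each $\{f_n(x_k)\}_n$ relatively compact, so a standard diagonal extraction produces a subsequence $\{n_j\}$ and a map $F\colon D\to Y$ with $F(x_k):=\lim_j f_{n_j}(x_k)$. The vanishing distortion makes $F$ distance-preserving on $D$, hence it extends uniquely to an isometric embedding $F\colon X\to Y$. The bound $d^{d_Y}_\H(f_n(A),B)<1/n$ and closedness of $B$ give $F(A)\subseteq B$, while the density condition $\BB_{n-1/n}(B)\subseteq B_{1/n}(f_n(\BB_n(A)))$ combined with properness of $X$ furnishes, for every $y\in Y$, preimages $x_n\in \BB_n(A)$ with $f_n(x_n)\to y$, hence some $x_\infty\in X$ with $F(x_\infty)=y$; the same argument applied to $b\in B$ keeps the preimages in $A$ (by $d^{d_Y}_\H(f_n(A),B)<1/n$) and yields $B\subseteq F(A)$.

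The main obstacle is arranging that $\{b_n\}$ is bounded in $Y$: for arbitrary choices of $f_n$ this can fail, as illustrated by $X=Y=\RR$, $A=B=\ZZ$ with $f_n(x):=x+n$, which give $b_n=n$. To handle this I would use the freedom in choosing the admissible metrics witnessing $d_\GH=0$ together with a pointed Gromov--Hausdorff precompactness argument: the rough isometries transfer uniform packing estimates from $(X,a_0)$ to the pointed proper spaces $(Y,b_n)$, so by Gromov's pointed precompactness a subsequence converges in pointed GH to a proper pointed space isometric to $(X,a_0)$; properness of $Y$ combined with the global condition $d_\GH=0$ (which controls $f_n$ on all of $\BB_n(A)$, not only near $a_0$) then forces this limit to be realised at an actual point $b_0\in B\subset Y$, and the diagonal extraction above can be carried out along the corresponding subsequence where $b_n\to b_0$.
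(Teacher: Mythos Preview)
Your overall strategy---reduce to both spaces proper via Corollary~\ref{cor:properness2} and then run a diagonal extraction on a countable dense subset using the rough isometries supplied by Lemma~\ref{lemmaherron1}/Proposition~\ref{prop:Herron-equivalences}---is exactly the route the paper intends when it defers to Herron's Proposition~3.12, and the separability remark the paper makes is precisely what you need to set up the countable set $D$. You have also correctly isolated the one point where the pair version genuinely differs from the pointed one: in Herron's situation $A=\{a\}$, $B=\{b\}$ and admissibility forces $\delta_n(a,b)<1/n$, so the anchor is automatic, whereas here the condition $d_\H^{\delta_n}(A,B)<1/n$ does not pin down $f_n(a_0)$, and your example $f_n(x)=x+n$ on $(\RR,\ZZ)$ shows the images $b_n$ can escape to infinity.

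The gap is that your proposed remedy does not close this hole. Passing to a pointed Gromov--Hausdorff limit of $(Y,b_{n_k})$ produces an abstract proper pointed space $(Z,z)$, and the $f_{n_k}$ do show $(Z,z)$ is pointed-isometric to $(X,a_0)$; but nothing you have written forces $(Z,z)$ to be realised as $(Y,b_0)$ for an actual $b_0\in Y$. For a fixed proper $Y$ and a sequence of basepoints tending to infinity, pointed limits can be strictly different from $Y$ itself (for instance, a surface of revolution that is cylindrical outside a compact set has the cylinder, not itself, as its pointed limits at infinity). Your appeal to ``the global condition $d_\GH=0$'' is not an argument here: that hypothesis was already consumed in producing the $f_n$, and it gives no further mechanism linking the geometry of $Y$ near the drifting $b_n$ to the geometry near any fixed point of $Y$. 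As written, what your argument actually yields is an isometry from $X$ onto \emph{some pointed limit of $Y$}, not onto $Y$; so the proof is incomplete at precisely the spot you flagged. The paper, for its part, does not spell this step out either---it simply invokes Herron---but in any case your sketch of the patch is not a proof.
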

	
	The proof of the preceding proposition is the same as that of \cite[Proposition 3.12]{herron}. We only notice that the balls $\BB_r(A)$ are separable since $\BB_r(A)$ is proper and is the countable union of compact balls $\BB_s(p)$. This fact allows us to construct the isometry between $(X,A)$ and $(Y,B)$ along the lines of the construction in \cite{herron}.

	\begin{corollary}
		Let $\GHPair$ denote the collection of all isometry classes of proper metric pairs $(X,A)$. Then $(\GHPair,d_{\GH})$ is a metric space.
	\end{corollary}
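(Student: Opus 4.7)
The plan is to verify the four metric axioms for $d_\GH$ on isometry classes of proper metric pairs, as all the substantial work has been done in the preceding results. Well-definedness on isometry classes is immediate: the set of $(\eps;A,B)$-admissible metrics on $X \sqcup Y$ transports bijectively under isometries of the pairs. Symmetry is built into the definition, since $(\eps;A,B)$-admissibility on $X \sqcup Y$ coincides with $(\eps;B,A)$-admissibility on $Y \sqcup X$. Finiteness holds trivially from the truncation $d_\GH \leq 1/2$.

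For the identity of indiscernibles, if $(X,A)$ and $(Y,B)$ are isometric via $\phi \colon X \to Y$ with $\phi(A)=B$, gluing $X \sqcup Y$ via $\delta(x,y) := d_Y(\phi(x),y)$ (or the analogous symmetric construction) produces admissible metrics realising arbitrarily small Hausdorff distances, hence $d_\GH = 0$. The converse is exactly Proposition \ref{prop:isom}, which applies because both pairs are proper (in particular complete).

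The main content is the triangle inequality. Given proper pairs $(X,A)$, $(Y,B)$, $(Z,C)$ with $d_\GH((X,A),(Y,B)) + d_\GH((Y,B),(Z,C)) < 1/2$ (otherwise the inequality is trivial by truncation), I would fix $\mu > 0$ and choose an $(\eps_1;A,B)$-admissible metric $\delta_1$ on $X \sqcup Y$ and an $(\eps_2;B,C)$-admissible metric $\delta_2$ on $Y \sqcup Z$ with $\eps_i$ within $\mu/4$ of the respective distances. Then I would glue them into a candidate admissible metric $\delta$ on $X \sqcup Z$ by
\[
\delta(x,z) := \inf_{y \in Y}\bigl\{\delta_1(x,y) + \delta_2(y,z)\bigr\}, \qquad x \in X,\ z \in Z,
\]
extending by $d_X$ and $d_Z$ on $X \times X$ and $Z \times Z$ respectively. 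Symmetry and the triangle inequality for $\delta$ follow by the standard concatenation argument. Positive definiteness uses properness of $Y$: if $\delta(x,z)=0$, a minimising sequence $\{y_n\}\subset Y$ stays in a bounded ball around some point and hence subconverges. The Hausdorff bound $d_\H^\delta(A,C) < \eps_1+\eps_2$ is obtained by chasing through $B$.

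The only technical step is verifying the two ball conditions $\BB_{1/\eps}^\delta(A) \subset B_\eps^\delta(Z)$ and its mirror for $\eps$ slightly larger than $\eps_1+\eps_2$. For $x \in X$ with $d_X(x,A) \leq 1/\eps$, admissibility of $\delta_1$ gives $y \in Y$ with $\delta_1(x,y) < \eps_1$, and a direct estimate using $d_\H^{\delta_1}(A,B) < \eps_1$ yields $d_Y(y,B) < 2\eps_1 + 1/\eps$; this is at most $1/\eps_2$ provided $\eps \geq \eps_2/(1 - 2\eps_1 \eps_2)$, a condition satisfied once $\eps$ exceeds $\eps_1 + \eps_2$ by any positive amount (for small $\eps_1,\eps_2$). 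Admissibility of $\delta_2$ then produces $z \in Z$ with $\delta_2(y,z)<\eps_2$, so $\delta(x,z) < \eps_1+\eps_2 < \eps$. Letting $\mu \to 0$ yields the triangle inequality. The main (entirely modest) obstacle is this bookkeeping of radii, which parallels the pointed case treated in \cite{herron} and involves no new idea.
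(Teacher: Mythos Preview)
Your proposal is correct and follows the same route as the paper: invoke Proposition~\ref{prop:isom} for positive definiteness and treat the remaining axioms as routine. The paper's proof is terser---it simply asserts that symmetry, non-negativity, and the triangle inequality are ``clear'' (the pointed analogue being in \cite{herron})---whereas you spell out the gluing construction $\delta(x,z)=\inf_{y\in Y}\{\delta_1(x,y)+\delta_2(y,z)\}$ and the radius bookkeeping for the ball conditions; this added detail is accurate and fills what the paper leaves implicit.
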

	\begin{proof}
		Clearly, $d_\GH$ is symmetric and non-negative, and satisfies the triangle inequality. From Proposition \ref{prop:isom}, $d_\GH$ is positive definite. Therefore, $(\GHPair,d_\GH)$ is a metric space.
	\end{proof}

	\begin{proposition}
		Let $\{(X_i,A_i)\}_{i\in\NN}$ and $(X,A)$ proper metric pairs. Then $(X_i,A_i)\toGH (X,A)$ is equivalent to \[\lim_{i\to \infty}d_{\GH}((X_i,A_i),(X,A))=0.\]
	\end{proposition}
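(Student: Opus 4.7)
The plan is to use Proposition \ref{prop:Herron-equivalences} as a bridge: it converts $d_{\GH}$-convergence into the existence of rough isometries on nested balls $\BB_R(A_i)$, which is morally the same data as the CGGGMS Definition \ref{def:Gromov-Hausdorff-CGGGMS}, up to a careful bookkeeping of source and target radii.

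For the direction $(X_i,A_i)\toGH(X,A) \implies d_{\GH} \to 0$, I would mirror the construction in the second half of Proposition \ref{prop:dis-aprox}. Given CGGGMS data $\phi_i \colon \BB_{R_i}(A_i) \to X$ with $\eps_i \searrow 0$ and $R_i \nearrow \infty$, define an admissible metric on $X_i \sqcup X$ by
\[
\delta_i(x, y) := \tfrac{3\eps_i}{2} + \inf\bigl\{d_{X_i}(x, x') + d_X(\phi_i(x'), y) : x' \in \BB_{R_i}(A_i)\bigr\}
\]
for $x \in X_i$, $y \in X$, extended symmetrically and coinciding with the given metrics on each factor. The triangle inequality follows verbatim from the computations in Proposition \ref{prop:dis-aprox} using condition (1). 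Condition (2) yields $d^{\delta_i}_{\H}(A_i, A) \leq C\eps_i$; the trivial bound $\delta_i(x, \phi_i(x)) \leq 3\eps_i/2$ gives $\BB^{\delta_i}_{1/(C\eps_i)}(A_i) \subset B^{\delta_i}_{C\eps_i}(X)$; and condition (3) combined with this bound yields the symmetric inclusion $\BB^{\delta_i}_{1/(C\eps_i)}(A) \subset B^{\delta_i}_{C\eps_i}(X_i)$, all for a fixed constant $C$ and for $i$ large enough that $1/(C\eps_i) \leq R_i$. Therefore $\delta_i$ is $(C\eps_i;A_i,A)$-admissible, and $d_{\GH}((X_i, A_i), (X, A)) \leq C\eps_i \to 0$.

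For the direction $d_{\GH} \to 0 \implies (X_i,A_i)\toGH(X,A)$, I would apply Proposition \ref{prop:Herron-equivalences}(3) with parameters $R = n$ and $\eps = 1/n$: for each $n$ there is an index $I_n$ such that for every $i \geq I_n$ there is a $(1/n)$-rough isometry $g_{n,i} \colon \BB_n(A_i) \to X$ from $(\BB_n(A_i),A_i)$ to $(\BB_{n-1/n}(A),A)$. A diagonal argument then produces a single sequence: taking the $I_n$ strictly increasing and setting, for $i \in [I_n, I_{n+1})$, the CGGGMS parameters $R^c_i := n-C/n$, $\eps^c_i := C/n$ for a suitable constant $C$, and $\phi_i := g_{n,i}|_{\BB_{R^c_i}(A_i)}$, one obtains $R^c_i \nearrow \infty$ and $\eps^c_i \searrow 0$. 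Conditions (1) and (2) of Definition \ref{def:Gromov-Hausdorff-CGGGMS} are inherited directly from the rough-isometry properties of $g_{n,i}$, and condition (3) follows by combining the covering $\BB_{n-1/n}(A) \subset B_{1/n}(g_{n,i}(\BB_n(A_i)))$ with the distortion and $d_{\H}$ bounds, using the diagonal shrinkage of $R^c_i$ to force the preimages produced by the covering property to lie in $\BB_{R^c_i}(A_i)$.

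The main obstacle is in this last step: the CGGGMS definition requires the same radius $R^c_i$ on both the domain of $\phi_i$ and the covered ball $\BB_{R^c_i}(A)$, while the Herron rough-isometry formulation naturally carries a small $O(\eps)$ discrepancy between source and target radii coming from the back-and-forth between $(X_i,A_i)$- and $(X,A)$-balls. This is absorbed by the diagonal choice of $R^c_i$ slightly below $n$ and a correspondingly inflated $\eps^c_i$, both of which still satisfy $R^c_i \nearrow \infty$ and $\eps^c_i \searrow 0$.
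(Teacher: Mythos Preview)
Your proposal is correct and close in spirit to the paper's argument, with one packaging difference worth noting.

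For the forward direction, the paper does not build the admissible metric by hand; instead it verifies that the CGGGMS maps $f_i$, restricted to $\BB_R(A_i)$, are $\eps$-rough isometries from $(\BB_R(A_i),A_i)$ to $(\BB_{R-\eps}(A),A)$ for large $i$, and then invokes the implication $(3)\Rightarrow(1)$ of Proposition~\ref{prop:Herron-equivalences}. Since that implication is itself proved via Lemma~\ref{lemmaherron1}(2), which constructs precisely the admissible metric you write down, your version simply inlines that lemma. The paper's route is more modular; yours is more self-contained and makes the constant $C$ explicit.

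For the backward direction, both arguments use Proposition~\ref{prop:Herron-equivalences}(3). The paper is quite terse: it just picks sequences $\eps_i\searrow 0$, $R_i\nearrow\infty$ and asserts that the resulting $\eps_i$-rough isometries from $(\BB_{R_i}(A_i),A_i)$ to $(\BB_{R_i-\eps_i}(A),A)$ ``are exactly'' Definition~\ref{def:Gromov-Hausdorff-CGGGMS}. You are more explicit about the diagonal extraction and, in particular, about the radius discrepancy you flag as the main obstacle --- a point the paper leaves implicit in its identification of the two conditions. Your extra care here is justified and does no harm.
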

	\begin{proof}
		Let us assume that $(X_i,A_i)\toGH (X,A)$, that is, we have $\eps_i\searrow 0$, $R_i\nearrow \infty$ and maps $f_i\colon \BB_{R_i}(A_i)\to X$ as in Definition \ref{def:Gromov-Hausdorff-CGGGMS}. If $R>\eps>0$, take $i\in \NN$ sufficiently large such that $R_i>R>\eps>3\eps_i$. It is clear that the restriction of $f_i$ gives an $\eps_i$-rough isometry from $(\BB_{R_i}(A_i),A_i)$ to $(\BB_{R_i}(A),A)$. Now, this implies that $f_i$ restricted to $\BB_R(A_i)$ still has distortion less than $\eps_i$ and $d_\H(f_i(A_i),A)\leq \eps_i$. Moreover, for any $y\in \BB_{R-\eps}(A)\subset \BB_{R_i}(A)$, we know there is some $x\in \BB_{R_i}(A_i)$ such that $d_X(y,f_i(x))<\eps_i$, and by the triangle inequality and the fact that $f_i$ has distortion less than $\eps_i$ and $d^{d_X}_\H(f_i(A_i),A)\leq \eps_i$, we have
		\[
		|d_{X_i}(x,A_i)-d_X(f_i(x),A)|\leq 2\eps_i,
		\]
		which in turn implies 
		\[
		d_{X_i}(x,A_i) \leq d_X(f_i(x),A)+2\eps_i \leq d_X(f_i(x),y)+d_X(y,A)+2\eps_i \leq 3\eps_i+R-\eps<R.
		\]
		Thus, $\BB_{R-\eps}(A)\subset B_\eps(f_i(\BB_{R}(A)))$. This means that $f_i$ induces an $\eps$-rough isometry from $(\BB_{R}(A_i),A_i)$ to $(\BB_{R-\eps}(A),A)$ for sufficiently large $i\in \NN$. Using Proposition \ref{prop:Herron-equivalences}, we conclude that \[d_\GH((X_i,A_i),(X,A))\to 0.\]
		
		Conversely, if we assume that 
		\[d_\GH((X_i,A_i),(X,A))\to 0\] and we consider sequences $\eps_i\searrow 0$ and $R_i \nearrow\infty$ with $R_i>\eps_i$, then we have $\eps_i$-rough isometries $f_i$ from $(\BB^{d_{X_i}}_{R_i}(A_i),A_i)$ to $(\BB^{d_X}_{R_i-\eps_i}(A),A)$, by assertion (3) of Proposition \ref{prop:Herron-equivalences}. This is exactly Definition \ref{def:Gromov-Hausdorff-CGGGMS}.
	\end{proof}

	\section{Main results}
	\label{sec:main results}
	This section is devoted to the embedding, completeness and compactness theorems for the Gromov--Hausdorff distance of metric pairs. These results are the counterparts of the main results in \cite{herron}. The proofs are natural generalisations of the arguments given in \cite{herron} but we include most of the details for the sake of completeness.

	\begin{proof}[Proof of Theorem \ref{thm:embedding}]
		We can construct the space $Y$ and prove it is a non-complete and locally complete metric space satisfying item 1 just as in the proof of the Embedding Theorem in \cite{herron}.
		Moreover, we define $\eps_n$, $R_n$ and $\delta_n$ in the same way as in the proof of the Embedding Theorem in \cite{herron}. Namely, we choose $\eps_n>d_{\GH}((X_n,A_n),(X_{n+1},A_{n+1}))$ such that $\sum_{n=1}^{\infty} \eps_n < \infty$. We also set $R_n = 1/\eps_n$ and choose $\delta_n$ a $(\eps_n;A_n,A_{n+1})$-admissible metric on $X_n\sqcup X_{n+1}$.
		
		It is clear that any sequence $\{a_i\}_{i\in \NN}\subset Y$ such that $a_i\in A_i$ and $d(a_i,a_{i+1})< \eps_i$ for sufficiently large $i\in\NN$ is a Cauchy sequence, therefore it converges to some $a\in \overline{Y}$. We can then define
		\[
		W = \left\{\lim_{i\to \infty}a_i\in \overline{Y} : \{a_i\}_{i\in \NN}\subset Y,\ a_i\in A_i\ \text{and}\ d(a_i,a_{i+1})< \eps_i\right\}.
		\]
		This set is non-empty, since each $A_i$ is non-empty and we can construct at least one limit of a sequence as in the definition of $W$. It is also a closed subset of $\overline{Y}$ by a standard diagonal argument.
		
		We will now prove that $(X_i,A_i)\toGH (Z,W)$. The argument is very similar to the one used in \cite{herron}. We give the details for the convenience of the reader.
		
		Fix $\eps \in (0,1/2)$ and set $R=1/\eps$. Take $N\in \NN$ such that $\sum_{i=n}^\infty \eps_i<\eps/4$ for $n\geq N$. 
		
		\begin{claim} 
			$d^d_\H(A_n,W)<\eps$ for $n\geq N$. 
		\end{claim}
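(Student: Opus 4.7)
The plan is to prove both inclusions $A_n\subset B_\eps(W)$ and $W\subset B_\eps(A_n)$ in $\overline{Y}$, each by chaining along the admissible metrics $\delta_i$. The engine is the $(\eps_i;A_i,A_{i+1})$-admissibility of $\delta_i$: the bound $d^{\delta_i}_\H(A_i,A_{i+1})<\eps_i$ lets one hop, from any $a_i\in A_i$, to some $a_{i+1}\in A_{i+1}$ with $\delta_i(a_i,a_{i+1})<\eps_i$ (and symmetrically, from $A_{i+1}$ back to $A_i$). By the construction of $Y$, the distance $d$ restricted to $A_i\cup A_{i+1}$ is bounded above by $\delta_i$, so each hop costs at most $\eps_i$ in $Y$. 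The choice of $N$ ensures $\sum_{i\geq n}\eps_i<\eps/4$ for $n\geq N$, which will keep all telescoped errors well within $\eps$.

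For $A_n\subset B_\eps(W)$, I would fix $a_n\in A_n$ and iterate the forward hopping step to produce a sequence $\{a_i\}_{i\geq n}$ with $a_i\in A_i$ and $d(a_i,a_{i+1})<\eps_i$. By summability this is Cauchy in $Y$, so it converges in $\overline{Y}$ to some $w$; extending the sequence to all of $\NN$ by picking any point in $A_i$ for $i<n$ (only the tail controls membership in $W$) exhibits $w\in W$. A telescoping estimate then yields $d(a_n,w)\leq\sum_{i=n}^\infty \eps_i<\eps/4<\eps$.

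For $W\subset B_\eps(A_n)$, given $w\in W$ as $\lim a'_i$ for a defining sequence whose tail satisfies $d(a'_i,a'_{i+1})<\eps_i$ for $i\geq M$, pick $m\geq\max(M,n)$ large enough that $d(a'_m,w)<\eps/4$. If $m=n$ there is nothing more to do; otherwise I would run the hopping construction \emph{backwards} from $b_m:=a'_m$: by admissibility of $\delta_{i-1}$ pick $b_{i-1}\in A_{i-1}$ with $d(b_{i-1},b_i)<\eps_{i-1}$ for $i=m,m-1,\ldots,n+1$, landing at some $b_n\in A_n$. Telescoping together with the tail bound gives
\[
d(b_n,w)\leq\sum_{i=n}^{m-1}\eps_i+d(a'_m,w)<\eps/4+\eps/4=\eps/2<\eps.
\]

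The one place requiring care is precisely this backward extension: a generic defining sequence for a point of $W$ is only controlled on its tail, so it need not place a useful representative in $A_n$ itself. The symmetry of the Hausdorff-distance bound $d^{\delta_i}_\H(A_i,A_{i+1})<\eps_i$ in its two factors is what legitimises the reverse chaining, and after that both inclusions are routine telescopings; I do not foresee a genuine obstacle beyond this bookkeeping.
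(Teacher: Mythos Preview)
Your proof is correct. The forward inclusion $A_n\subset B_\eps(W)$ matches the paper exactly. For $W\subset B_\eps(A_n)$, however, you work harder than necessary: by the paper's definition of $W$, a point $w\in W$ comes equipped with a sequence $\{a_i\}_{i\in\NN}$ satisfying $a_i\in A_i$ and $d(a_i,a_{i+1})<\eps_i$ for \emph{every} $i\in\NN$, not merely for a tail. Hence $a_n\in A_n$ is already available as the $n$-th term of the defining sequence, and the telescoping estimate $d(a_n,w)\leq\sum_{k=n}^\infty\eps_k<\eps/4$ finishes the argument immediately --- this is what the paper does. Your backward-chaining construction is valid (the Hausdorff bound $d_\H^{\delta_i}(A_i,A_{i+1})<\eps_i$ is indeed symmetric), and it would be the right move had $W$ been defined via sequences controlled only on a tail; but the definition of $W$ was chosen precisely so that this extra step is unnecessary.
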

		
		If $a\in W$ then choose any sequence $\{a_i\}_{i\in \NN} \subset Y$ such that $a_i\in A_i$, $d(a_i,a_{i+1})<\eps_i$ and $\lim_{i\to \infty} a_i = a$. In particular, if $n\geq N$ then
		\[
		d(a_n,a) = \lim_{i\to \infty} d(a_n,a_i) \leq \lim_{i\to \infty} \sum_{k=n}^{i-1}d(a_k,a_{k+1})\leq \sum_{k=n}^\infty \eps_k <\frac{\eps}{4}<\eps.
		\] Therefore $W\subset B^d_\eps(A_n)$.
		
		On the other hand, if $a_n\in A_n$ then we can inductively construct a sequence $\{a_i\}_{i=n}^{\infty}\subset Y$ such that $a_i\in A_i$ and $d(a_i,a_{i+1}) < \eps_i$ for all $i\geq n$, therefore this sequence is convergent in $\overline{Y}$, with some limit $a\in W$ and clearly $d(a_n,a) <\eps$. Thus $A_n\subset B^d_\eps(W)$, which proves the claim.
		
		\begin{claim} 
			$\BB^d_R(A_n)\cap X_n\subset B^d_\eps(Z)$ for $n\geq N$.
		\end{claim}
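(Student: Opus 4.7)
The plan is to mimic the construction that placed $W$ inside $Z$: given any point $x_n\in\BB^d_R(A_n)\cap X_n$, I would construct a Cauchy sequence $\{x_i\}_{i\geq n}\subset Y$ with $x_i\in X_i$ and $d(x_i,x_{i+1})<\eps_i$, and then show that its limit in $\overline{Y}$ lies in $Z$ and sits within distance $\sum_{k\geq n}\eps_k<\eps/4$ of $x_n$.

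First I would enlarge $N$, if necessary, so that $R+\eps/2\leq R_i=1/\eps_i$ for every $i\geq N$; this is possible because $R=1/\eps$ is fixed and $\eps_i\to 0$. Starting from the given $x_n$, I would inductively select $x_{i+1}\in X_{i+1}$ as follows. Suppose $x_i\in X_i$ has been chosen with
\[
d(x_i,A_i)\leq R+2\sum_{k=n}^{i-1}\eps_k.
\]
By the choice of $N$, the right-hand side is at most $1/\eps_i$, so $x_i\in\BB^{\delta_i}_{1/\eps_i}(A_i)$. The $(\eps_i;A_i,A_{i+1})$-admissibility of $\delta_i$ then yields $x_{i+1}\in X_{i+1}$ with $\delta_i(x_i,x_{i+1})<\eps_i$, and hence $d(x_i,x_{i+1})<\eps_i$. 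A triangle inequality combined with $d^{\delta_i}_{\H}(A_i,A_{i+1})<\eps_i$ upgrades the inductive bound to step $i+1$:
\[
d(x_{i+1},A_{i+1})\leq d(x_i,A_i)+2\eps_i\leq R+2\sum_{k=n}^{i}\eps_k,
\]
which is again below $1/\eps_{i+1}$, so the induction continues indefinitely.

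Since $\sum_i\eps_i<\infty$, the resulting sequence $\{x_i\}$ is Cauchy in $Y$, so it converges to some $z\in\overline{Y}$. The same reasoning that places $W$ inside $Z$ in the construction of $Y$ (a Cauchy sequence with exactly one representative in each $X_i$, $i\geq n$, cannot converge to a point of any single $X_k$) shows $z\in Z$. Finally, $d(x_n,z)\leq\sum_{k=n}^{\infty}\eps_k<\eps/4<\eps$, so $x_n\in B^d_\eps(Z)$, which proves the claim.

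The main obstacle is maintaining the inductive hypothesis $d(x_i,A_i)\leq 1/\eps_i$ at each step so that admissibility of $\delta_i$ can be applied: the cumulative error $2\sum_{k=n}^{i-1}\eps_k$ must stay below $1/\eps_i-R$. This works because the tail is bounded by $\eps/4$ while $1/\eps_i\to\infty$, provided $N$ is chosen large enough. The second delicate point, that the limit of such a sequence lies in $Z$ rather than in some $X_k$, is inherited directly from the structural properties of $Y$ established in the proof of the Embedding Theorem in \cite{herron}, exactly as used earlier for the inclusion $W\subset Z$.
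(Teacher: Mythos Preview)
Your proposal is correct and follows essentially the same approach as the paper: starting from $x_n$, inductively build $x_{i+1}\in X_{i+1}$ with $d(x_i,x_{i+1})<\eps_i$ using the $(\eps_i;A_i,A_{i+1})$-admissibility of $\delta_i$, then take the limit in $Z$. The paper records the engulfing inclusion $\BB^{\delta_n}_{R}(A_n)\cap X_n \subset B^{\delta_n}_{\eps_n}(\BB^{\delta_n}_{R+2\eps_n}(A_{n+1})\cap X_{n+1})$ and defers the remaining details to \cite{herron}, whereas you spell out the inductive bookkeeping explicitly; note also that enlarging $N$ is in fact unnecessary, since $\eps_i<\eps/4$ already forces $1/\eps_i>4/\eps>1/\eps+\eps/2=R+\eps/2$.
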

		
		The following is a simple consequence of the definition of $\eps_n$:
		\begin{align*}
			\BB^{\delta_n}_{R}(A_n)\cap X_n \subset B^{\delta_n}_{\eps_n}(\BB^{\delta_n}_{R+2\eps_n}(A_{n+1})\cap X_{n+1}),\\
			\BB^{\delta_n}_{R}(A_{n+1})\cap X_{n+1} \subset B^{\delta_n}_{\eps_n}(\BB^{\delta_n}_{R+2\eps_n}(A_{n})\cap X_{n})
		\end{align*}
		for any $R\in (0,R_n]$.
		
		Given any $x_n\in \BB^d_R(A_n)\cap X_n$, we can then construct a sequence $\{x_i\}_{i=n}^\infty$ with $x_i\in X_i$ and $d(x_i,x_{i+1})<\eps_i$ just as in \cite{herron}. Such a sequence is Cauchy and converges to some $x\in Z$ with $d(x_n,x)<\eps$. This implies the claim.
		
		\begin{claim} 
			$\BB^d_R(W)\cap Z\subset B^d_\eps(X_n)$ for $n\geq N$.
		\end{claim}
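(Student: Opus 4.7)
Fix $z\in\BB^d_R(W)\cap Z$ and $n\geq N$; the goal is to exhibit $x_n\in X_n$ with $d(z,x_n)<\eps$. The plan is to approximate $z$ by points from the $X_i$'s, which must live in $X_{i_k}$ with $i_k\to\infty$, since $z\notin Y$ and each $X_m$ is closed in $\overline{Y}$ (being isometrically embedded and proper, hence complete, so no Cauchy subsequence trapped in a fixed $X_m$ can limit outside $Y$). Pass to such an approximating sequence $y_k\in X_{i_k}$ with $d(y_k,z)<\eta$ for some fixed small $\eta<\eps/8$, and pick $w\in W$ with $d(z,w)<R+\eta$ together with a defining sequence $a_j\in A_j$, $d(a_j,a_{j+1})<\eps_j$, $a_j\to w$. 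For $k$ large, $d(w,a_{i_k})<\eta$, and because $y_k,a_{i_k}\in X_{i_k}$ the distance $d(y_k,a_{i_k})$ is computed inside $X_{i_k}$, giving
\[
d_{X_{i_k}}(y_k,A_{i_k})\leq d(y_k,a_{i_k})<R+3\eta.
\]

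The next step is to descend from $X_{i_k}$ to $X_n$ using the admissible metrics $\delta_j$. Set $x_{i_k}:=y_k$. The $(\eps_j;A_j,A_{j+1})$-admissibility of $\delta_j$ yields
\[
\BB^{\delta_j}_{\rho}(A_{j+1})\cap X_{j+1}\subset B^{\delta_j}_{\eps_j}\bigl(\BB^{\delta_j}_{\rho+2\eps_j}(A_j)\cap X_j\bigr)
\]
whenever $\rho+2\eps_j\leq 1/\eps_j$. Iterating from $j=i_k-1$ down to $j=n$, I choose $x_j\in X_j$ with $d(x_{j+1},x_j)<\eps_j$ and $d_{X_j}(x_j,A_j)<R+3\eta+2\sum_{\ell=j}^{i_k-1}\eps_\ell$. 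Telescoping,
\[
d(y_k,x_n)\leq\sum_{j=n}^{i_k-1}\eps_j<\tfrac{\eps}{4},
\]
and therefore $d(z,x_n)\leq d(z,y_k)+d(y_k,x_n)<\eta+\eps/4<\eps$, as desired.

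The only delicate point is bookkeeping: the cumulative radius $R+3\eta+2\sum_{\ell=j}^{i_k-1}\eps_\ell$ must remain within the admissibility range $1/\eps_j$ at every $j\geq n$, so that the inclusion applies at each step of the descent. Since the running budget is uniformly bounded above by $R+3\eta+2\sum_{\ell=N}^{\infty}\eps_\ell<R+\eps$ while $1/\eps_j\to\infty$, it suffices that $1/\eps_j\geq R+\eps$ for all $j\geq n$; this follows from the choice of $N$ (which forces $\eps_j<\eps/4$, hence $1/\eps_j>4R\geq R+\eps$ since $\eps<1/2$), possibly after enlarging $N$. In spirit this is exactly the estimate used for Claim 2, but now run in reverse, from $X_{i_k}$ back to $X_n$, with the summability $\sum\eps_i<\infty$ keeping the total drift below $\eps/4$.
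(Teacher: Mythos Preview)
Your argument is correct and follows essentially the same strategy as the paper. Both proofs iterate the admissibility inclusion $\BB_\rho(A_{j+1})\cap X_{j+1}\subset B_{\eps_j}(\BB_{\rho+2\eps_j}(A_j)\cap X_j)$ to step from some distant $X_m$ back to $X_n$, the total drift being controlled by $\sum_{j\geq N}\eps_j<\eps/4$; the only difference is organizational: the paper first states this descent abstractly as ``Engulfing Conditions'' on the sets $\BB^d_T(A_m)\cap X_m$ and then passes to the closure $Z$, whereas you fix an approximant $y_k\in X_{i_k}$ of $z$ first and carry out the descent pointwise. Your bookkeeping at the end (that $\eps_j<\eps/4$ forces $1/\eps_j>4R\geq R+\eps$) is exactly the range check the paper makes when verifying $T+2\sum_{k\geq N}\eps_k<R_n$, and no enlargement of $N$ is actually needed.
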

		
		By an analogous argument to the one in \cite[Section 4.1.3]{herron}, we can prove the following Engulfing Conditions: for any $T>0$ and $N\in\NN$ such that
		\[
		T+2\sum_{k=N}^\infty \eps_k < R_n
		\] for any $n\geq N$, we have
		\[
		\BB^d_T(A_m)\cap X_m \subset \begin{cases}
			B^d_{\sum_{k=m}^{n-1}\eps_k}(X_n) & \text{if } n>m\geq N,\\
			B^d_{\sum_{k=n}^{m-1}\eps_k}(X_n) & \text{if } m>n\geq N.
		\end{cases}
		\]
		In particular, if $T=R+2\eps$ and $N\in \NN$ is such that $\sum_{k=N}^\infty \eps_k<\eps/4$ then
		\[
		T+ 2\sum_{k=N}^\infty \eps_k < \frac{1}{\eps} + 2\eps + \frac{\eps}{2} < \frac{7}{2\eps} < R_n
		\] for any $n\geq N$. Therefore we have 
		\[
		\BB^d_{R+2\eps}(A_m)\cap X_m \subset B^d_{\eps/2}(X_n)
		\] for any $m,n\geq N$. In particular, since $\BB^d_{R+\eps}(A_N) \subset \BB^d_{R+2\eps}(A_m)$ for $m\geq N$, which can be easily verified by the definition of $N$, we get that
		\[
		\BB^d_{R+\eps}(A_N)\cap X_m \subset B^d_{\eps/2}(X_n)
		\] for $m,n\geq N$, therefore
		\[
		\BB^d_{R+\eps}(A_N)\cap \bigsqcup_{m=N}^\infty X_m \subset B^d_{\eps/2}(X_n)
		\] for $n\geq N$. This implies
		\[
		B^d_{R+\eps}(A_N)\cap Z \subset B^d_{R+\eps}(A_N)\cap \overline{\left(\bigsqcup_{m=N}^\infty X_m\right)} \subset B^d_{\eps}(X_n).
		\]
		
		Now if we fix $z\in \BB^d_R(W)\cap Z$ then there is some $w\in W$ such that $d(z,w)\leq R$, and since $w\in W$ then $w=\lim_{n\to \infty} a_n$ for some sequence $\{a_n\}_{n\in \NN}$ with $a_n\in A_n$ and $d(a_n,a_{n+1})<\eps_n$. In particular,
		\[
		d(z,a_N)\leq d(z,w) + d(w,a_N) \leq R + \lim_{m\to \infty} \sum_{k=N}^{m-1}d(a_k,a_{k+1}) \leq R+ \sum_{k=N}^\infty \eps_k < R+ \eps.
		\] Therefore $\BB_R^d(W)\cap Z\subset B^d_{R+\eps}(A_N)\cap Z\subset B^d_\eps(X_n)$ and the claim follows.
		
		Combining claims 1, 2 and 3 we can conclude that $(X_i,A_i) \toGH (Z,W)$. We prove the properness of $\overline{Y}$ by applying the same argument as in the proof of the Embedding Theorem in \cite{herron} after fixing some $w\in W$ and some sequence $\{a_i\}_{i\in \NN}$ such that $a_i\in A_i$, $d(a_i,a_{i+1})<\eps_i$ and $w=\lim_{i\to\infty} a_n$, and observing that $(X_i,a_i)\toGH (Z,w)$ in the sense of \cite{herron}.
		
		For part 4, let $R>0$ and, for each $i\in\NN$, set $R_i := R+2h_i$ where
		\[
		h_i := 2d_\GH((X_i,A_i),(Z,W)).
		\]
		Then $R_i>R$ and $R_i\to R$. Moreover, for $i$ sufficiently large, we have $h_i<1/R$ and
		\[
		\BB_R(W)\cap Z \subset B_{h_i} (B_{R+2h_i}(A_i)\cap X_i).
		\]
		In particular, for each point $w\in \BB_R(W)\cap Z$ there are points $x_i\in B_{R_i}(A_i)\cap X_i\subset \BB_{R_i}(W)\cap X_i$ with $d(x_i,w)<h_i$.
		
		On the other hand, if $\{x_i\}_{i\in\NN}$ is any sequence such that $x_i\in\BB_{R_i}(W)\cap X_i$ for all $i\in\NN$ and $\{x_{i_k}\}_{k\in \NN}$ is a subsequence that converges to $x\in Z$, we can prove that $d(x,W) \leq R$ as follows: if $\eps>0$, pick $N\in \NN$ such that whenever $k\geq N$ then
		\[
		R_{i_k}<R+\eps/3, \quad d^d_\H(A_{i_k},W)<\eps/3, \quad \text{and}\quad d(x_{i_k},x)<\eps/3,
		\] which implies that there exist $a_{i_k}\in A_{i_k}$ and $w_{i_k}\in W$ such that
		\[
		d(x,W)\leq d(x,w_{i_k})\leq d(x,x_{i_k})+d(x_{i_k},a_{i_k})+d(a_{i_k},w_{i_k})<R+\eps.
		\] Taking $\eps \to 0$, we get the claim.
		
		By the same argument as in \cite{herron}, we conclude that $\BB_{R_i}(W)\cap X_i$ Kuratowski converges to $\BB_R(W)\cap Z$, which implies that it also Hausdorff convergence since the spaces are proper.
		
		Finally, for part 5, we get that $Z$ is geodesic whenever the spaces $X_i$ are length spaces by the same argument as in \cite{herron}, and regarding the convergence of the closed sets $\BB_R(A_i)$ in $\overline{Y}$, we observe that they eventually lie in $\BB_{2R}(A)$, and since $\overline{Y}$ is proper, it is enough to prove that $\BB_R(A_i)$ Kuratowski converge to $\BB_R(A)$. The details are analogous to those in \cite{herron}.
	\end{proof}

	\begin{proof}[Proof of Theorem \ref{thm:completeness}]    
		Let $\{(X_i,A_i)\}_{i\in\NN}$ be a Cauchy sequence in $\GHPair$ an take a subsequence $\{(X_{i_k},A_{i_k})\}_{k\in\NN}$ such that
		\[
		\sum_{k=1}^{\infty} d_{\GH}((X_{i_k},A_{i_k}),(X_{i_{k+1}},A_{i_{k+1}})) < \infty.
		\] Then Theorem \ref{thm:embedding} implies that $(X_{i_k},A_{i_k})\toGH (Z,W)$ for some proper metric pair $(Z,W)$. Thus, the isometry class of $(Z,W)$ belongs to $\GHPair$ and $(X_i,A_i)\toGH (Z,W)$.
	\end{proof}

	\begin{proof}[Proof of Theorem \ref{thm:precompactness}]
		As in the preceding theorems in this section, the proof is analogous to the one in the pointed case (see \cite{herron}). We give the details for convenience of the reader. 
		
		The implication $(2)\Rightarrow (3)$ follows directly from the fact that 
		\[
		N(\eps, \BB_{1/\eps}(A))\leq P(\eps/2, \BB_{1/\eps}(A))\leq P(\eps/2, \BB_{2/\eps}(A))\leq \pi(\eps/2),
		\]
		so we can define $\nu(\eps) = \pi(\eps/2)$.
		
		On the other hand, if we assume $(1)$ and $\mathcal{X}$ is precompact with respect to $d_\GH$ and uniformly bounded in the sense of pairs, then in particular $\mathcal{X}$ is totally bounded. Therefore for a fixed $\eps\in(0,1)$ there is a minimal $N\in \NN$ (which only depends on $\eps$) such that there exist $(X_1,A_1),\dots,(X_N,A_N)\in\mathcal{X}$ that make up a $(\eps/5)$-net for $\mathcal{X}$. We then define
		\[
		\pi(\eps) = \max_{1\leq n\leq N}\{P(\eps/2,\BB_{2/\eps}(A_n)\cap X_n)\},
		\] which is finite since each $A_n$ is compact. We can verify that $\pi(\eps)$ satisfies item $(2)$ by applying Lemma \ref{lem:herron-3.9} and an analogous argument to the one used in \cite{herron}.
		
		Finally, we prove that $(3)\Rightarrow (1)$. Let us assume there is some function $\nu$ as in $(3)$ and fix $\eps>0$ and a sequence $\{(X_n,A_n)\}_{n \in\NN} \subset \mathcal{X}$. We will prove that there is a subsequence $\{(X_{i},A_{i})\}_{i \in\NN}$ such that $d_\GH((X_{i},A_{i}),(X_{k},A_{k}))<2\eps$ for all $i,k\in \NN$ and the conclusion follows as in \cite{herron}.
		
		As in \cite{herron}, we first get some $N\in \NN\cap (0,\nu(\eps/2)]$ and a subsequence $\{(X_{i},A_{i})\}_{i \in\NN}$ of $\{(X_{n},A_{n})\}_{n \in\NN}$ such that
		\[N(\eps/2,\BB_{2/\eps}(A_{i})) = N\] for all $i\in \NN$. In particular, there exist distinct points $\{x_{i1},\dots,x_{iN}\}\subset \BB_{2/\eps}(A_{i})$ such that 
		\[
		\BB_{2/\eps}(A_{i}) \subset \bigcup_{j=1}^{N}B_{\eps/2}(x_{ij})
		\] for all $i\in \NN$. Moreover, up to taking a subsequence and relabelling, we may assume there is some $1\leq k\leq N$ such that
		\[
		A_i \subset \bigcup_{j=1}^{k}B_{\eps/2}(x_{ij})
		\] and $A_i\cap B_{\eps/2}(x_{ij}) \neq \varnothing$ for all $1\leq j\leq k$ and for all $i\in \NN$.
		
		Now, by observing that 
		\[
		d_{X_i}(x_{im},x_{in}) \leq \diam(\BB_{2\eps}(A_i)) \leq C + \frac{4}{\eps}
		\] for all $i\in\NN$ and $1\leq m<n\leq N$ (where $C>0$ comes from the fact that $\mathcal{X}$ is uniformly bounded in the sense of pairs) we can apply the same argument as in \cite[Lemma 3.3]{herron} to extract a subsequence $\{(X_j,A_j)\}_{j\in \NN}$ of $\{(X_i,A_i)\}_{i\in \NN}$ (therefore a subsequence of $\{(X_n,A_n)\}_{n\in\NN}$) such that, for all $j,k\in \NN$ and all $1\leq m<n\leq N$,
		\[
		|d_{X_j}(x_{jm},x_{jn})-d_{X_k}(x_{km},x_{kn})|<\eps/2.
		\] Applying Lemma \ref{lem:GH-with-nets}, we get that $d_{\GH}((X_j,A_j),(X_k,A_k))<2\eps$.
	\end{proof}

	\section{Convergence of tuples}
	\label{sec:tuples}
	It is easy to extend the definitions and results in the previous section to the more general setting of tuples of metric spaces. 
	
	\begin{definition}
		A \emph{metric tuple} consists of a metric space $X$ and a finite nested sequence of subsets 
		\[
		X \supseteq X^N \supseteq X^{N-1} \supseteq \dots \supseteq X^1\supsetneq \varnothing, 
		\] 
		where each $X^i$ is a closed subset of $X$. We say that $(X,X^N,\dots, X^1)$ is an \emph{extended metric tuple} if the metric space $X$ is an extended metric space. We denote such metric tuple by $(X,X^N,\dots,X^1)$.
	\end{definition}
	
	First, we have the analogous notion of Gromov--Hausdorff convergence for tuples.
	
	\begin{definition}\label{def:Gromov-Hausdorff-CGGGMS-tuples} 
		A sequence $\{(X_i,X^N_i,\dots,X^1_i)\}_{i\in \NN}$ of metric tuples \emph{converges in the Gromov--Hausdorff topology} to a metric tuple $(X,X^N,\dots, X^1)$ if there exist sequences $\{\eps_i\}_{i\in\NN}$ and $\{R_i\}_{i\in\NN}$ of positive numbers with $\eps_i\searrow 0$, $R_i \nearrow\infty$, and $\eps_i$-approximations from $\BB_{R_i}(X^{N}_i)$ to
		$\BB_{R_i}(X^{N})$ for each $i\in\NN$, i.e. maps $\phi_i \colon \BB_{R_i}(X^N_i)\to X$ satisfying the following three conditions:
		\begin{enumerate}
			\item $\left| d_{X_i}(x,y)-d_X(\phi_i(x),\phi_i(y) \right|\leq\eps_i$ for any $x,y\in \BB_{R_i}(X^{N}_i)$;
			\item $d^{d_{X}}_{\H}(\phi_i(X^k_i),X^k)\leq\eps_i$ for $k\in \{1,\dots, N\}$;
			\item $\BB_{R_i}(X^k)\subset \BB_{\eps_i}(\phi_i(\BB_{R_i}(X^k_i)))$.
		\end{enumerate}
		We will denote the Gromov--Hausdorff convergence of metric tuples by 
		\[
		(X_i,X_i^N,\dots,X_i^1)\toGH (X,X^N,\dots,X^1).
		\]
	\end{definition}
	
	We can also define the Hausdorff distance between metric tuples that lie in the same metric space. 
	\begin{definition}
		Let $(Z,\delta)$ be a metric space, $X,Y \subset Z$ bounded subsets and tuples $(X,X^N,\dots,X^1)$, $(Y,Y^N,\dots,Y^1)$. The \emph{tuple Hausdorff distance} of $(X,X^N,\dots,X^1)$ and $(Y,Y^N,\dots,Y^1)$ is given by 
		\[
		d^\delta_{\H} ((X,X^N,\dots,X^1),(Y,Y^N,\dots,Y^1)) := d^\delta_\H (X,Y) + \sum_{k=1}^{N} d^\delta_\H(X^{k},Y^{k})
		\]
	\end{definition}
	We can now define the Gromov--Hausdorff distance of metric tuples in the compact case, just as we did for metric pairs.
	\begin{definition}\label{def:Gromov-Hausdorff tuples-Jansen}
		The \emph{tuple Gromov--Hausdorff distance} between two compact metric tuples $(X,X^N,\dots,X^1)$ and $(Y,Y^N,\dots,Y^1)$ is defined as
		\[
		d_{\GH} ((X,X^N,\dots,X^1),(Y,Y^N,\dots,Y^1)) := \inf\{d^\delta_\H ((X,X^N,\dots,X^1),(Y,Y^N,\dots,Y^1))\},
		\] where the infimum is taken over all admissible metrics $\delta$ on $X \sqcup Y$.
	\end{definition}
	
	For the non-compact case, we proceed as in the case of metric pairs by considering a suitable notion of admissible metrics in the disjoint union of metric tuples.
	\begin{definition}
		Given $\eps>0$ and metric tuples $(X,X^N,\dots,X^1)$, $(Y,Y^N,\dots,Y^1)$, an admissible distance function $\delta$ on $X\sqcup Y$ is \emph{$(\eps; (X^N,\dots,X^1),(Y^N,\dots, Y^1)$)-admissible} provided
		\[
		d^\delta_\H(X^i,Y^i)<\eps, \quad \BB_{1/\eps}^{\delta}(X^N)\subset B^{\delta}_\eps(Y), \quad \BB^{\delta}_{1/\eps}(Y^N)\subset B^{\delta}_\eps(X)
		\]
		for $i\in\{1,\dots,N\}.$
	\end{definition}
	
	\begin{definition}
		The \emph{tuple Gromov--Hausdorff distance} between not necessarily compact tuples $(X,X^N,\dots,X^1)$, $(Y,Y^N,\dots,Y^1)$ is given by
		\[
		d_{\GH}((X,X^N,\dots,X^1),(Y,Y^N,\dots,Y^1)) = \min\left\{\frac{1}{2},\widetilde{d}_{\GH}((X,X^N,\dots,X^1),(Y,Y^N,\dots,Y^1))\right\},
		\]
		where $\widetilde{d}_{\GH}((X,X^N,\dots,X^1),(Y,Y^N,\dots,Y^1))$ is the infimum of the set
		\[\{\eps>0:\text{there exists}\ \text{a $(\eps;(X^N,\dots,X^1),(Y^N,\dots,Y^1))$-admissible distance $\delta$ on }X\sqcup Y\}.\]
	\end{definition}
	
	\begin{remark}
		Also in this case, both definitions of tuple Gromov--Hausdorff distance induce the same topology on the space of compact tuples.
	\end{remark}

	Just as in the case of metric pairs, we get some equivalences between the previous definitions.
	
	\begin{proposition}
		Let $\{(X_i,X^N_i,\dots,X^1_i)\}_{i\in\NN}$ and $(X,X^N,\dots,X^1)$ proper metric tuples. Then:
		\begin{enumerate}
			\item The convergence $(X_i,X^N_i,\dots,X^1_i)\toGH (X,X^N,\dots,X^1)$ is equivalent to the condition  
			\[
			d_\GH((X_i,X^N_i,\dots,X^1_i),(X,X^N,\dots,X^1))\to 0.
			\]
			\item If $(X,X^N,\dots,X^1)$ and $\{(X_i,X^N_i,\dots,X^1_i)\}_{i\in \NN}$ are compact, \[(X_i,X^N_i,\dots,X^1_i)\toGH (X,X^N,\dots,X^1)\] is equivalent to the condition
			\begin{equation}\label{eq:convergence-jansen-tuples}
				d_\GH(X_i,X^N_i,\dots,X^1_i),(X,X^N,\dots,X^1))\to 0.
			\end{equation}
			In general, 
			\begin{equation}\label{eq:convergence-jansen-proper-tuples}
				d_\GH(\BB_r(X^N_i),X^N_i,\dots,X^1_i),(\BB_r(X^N),X^N,\dots,X^1))\to 0 \quad \text{for all }r>0  
			\end{equation}
			implies $(X_i,X^N_i,\dots,X^1_i)\toGH  (X,X^N,\dots,X^1)$. If in addition $\{X_i\}_{i\in \NN}$ and $X$ are length spaces, then the converse also holds.
		\end{enumerate}
	\end{proposition}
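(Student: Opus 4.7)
The plan is to carry the entire pair argument through verbatim, treating the $N$ distinguished subsets in parallel and keeping track of the small multiplicative factor $N$ that appears whenever a Hausdorff contribution $d_\H^\delta(X^k,Y^k)$ is summed across levels. All the auxiliary notions used in Section~\ref{sec:GH for metric pairs} extend to tuples in the obvious way: an $\eps$-rough isometry from $(\BB_R(X^N),X^N,\dots,X^1)$ to $(\BB_{R-\eps}(Y^N),Y^N,\dots,Y^1)$ is a map $f$ of distortion less than $\eps$ with $\BB_{R-\eps}(Y^N)\subset B_\eps(f(\BB_R(X^N)))$ and $d_\H^{d_Y}(f(X^k),Y^k)<\eps$ for every $k\in\{1,\dots,N\}$.

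For item~(1), I would first upgrade Proposition~\ref{prop:dis-aprox} and Lemma~\ref{lemmaherron1} to tuples. In one direction, given a $(\eps;(X^N,\dots,X^1),(Y^N,\dots,Y^1))$-admissible metric $\delta$, I choose $f\colon \BB^{d_X}_{1/\eps}(X^N)\to Y$ by picking $f(x)\in Y^k$ with $\delta(x,f(x))<\eps$ whenever $x$ lies in $X^k\setminus X^{k-1}$, starting from the innermost level; this is possible because each $d_\H^\delta(X^k,Y^k)<\eps$. Distortion and the covering condition then follow exactly as in the pair case. In the other direction, given an $\eps$-rough isometry $f$ from the tuple on $\BB_R(X^N)$ to the tuple on $\BB_{R-\eps}(Y^N)$, I patch together the admissible metric $\delta$ from the proof of Lemma~\ref{lemmaherron1} using $f$ (the definition of $\delta$ only sees the outermost subset $X^N$ and the map $f$, so no change is needed in the triangle-inequality checks). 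Admissibility at each intermediate level $X^k$ is then guaranteed by $d_\H^{d_Y}(f(X^k),Y^k)<\eps$ exactly as for the single-level case. Then I adapt Proposition~\ref{prop:Herron-equivalences} to characterise $d_\GH\to 0$ via the existence of $\eps_i$-rough isometries at each scale $R$, and the equivalence with Definition~\ref{def:Gromov-Hausdorff-CGGGMS-tuples} is a direct transcription of the final proposition in Section~\ref{sec:GH for metric pairs}, simply replacing the single Hausdorff bound $d_\H(f_i(A_i),A)\leq\eps_i$ by the $N$ bounds $d_\H(f_i(X_i^k),X^k)\leq\eps_i$.

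For item~(2), the compact case repeats the proof in Section~\ref{sec:GH for metric pairs} with the only change being that Hausdorff contributions are now summed over $k$; since $N$ is fixed, one still obtains $d_\GH((X_i,X_i^N,\dots,X_i^1),(X,X^N,\dots,X^1))\leq C_N\eps$ from an $\eps$-approximation and conversely. For the non-compact implication, convergence of the ball tuples gives, via the approximation characterisation from item~(1) applied level by level, the required maps $\phi_i$ with $R_i\nearrow\infty$ and $\eps_i\searrow 0$. For the converse under the length hypothesis, I combine Lemma~\ref{lem:length-space-balls} and Lemma~\ref{lem:H-balls} with the construction of $\delta_i$ from Section~\ref{sec:GH for metric pairs}: after building an admissible metric on $\BB_R(X_i^N)\sqcup\BB_R(X^N)$ using the rough isometry $f_i$, the length-space estimate ensures that each intermediate closed subset $X_i^k\subset\BB_R(X_i^N)$ sits within a small $\delta_i$-neighbourhood of $X^k$ and vice versa, so summing $N$ such controlled bounds yields $d_\GH\leq C_N\eps_i$ for the ball tuples.

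The main obstacle I expect is purely bookkeeping: several constants in the pair proofs (notably the factors $3/2$, $5/2$, $9/2$ in the admissible-metric construction) are sharpened by the specific structure of one distinguished subset, and when passing to tuples one must verify that the analogous construction yields simultaneous control of all $d_\H^{\delta_i}(X_i^k,X^k)$ using only $f_i$ and the innermost subset $X_i^N$. This is handled by noting that each $X_i^k$ is contained in $\BB_R(X_i^N)$ for sufficiently large $R$, so the same map $f_i$ certifies the rough-isometry condition for every level at once, and the factor $N$ is absorbed into a uniform constant $C_N$ independent of $i$.
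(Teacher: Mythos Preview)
Your proposal is correct and follows exactly the route the paper intends: the paper provides no proof for this proposition, stating only that the tuple results are obtained ``just as in the case of metric pairs'' and omitting the arguments as ``completely analogous''. Your sketch of carrying the pair proofs through level by level, with the nested choice $f(x)\in Y^k$ for $x\in X^k\setminus X^{k-1}$ and the harmless constant $C_N$, is precisely that analogous argument.
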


	We can also have versions of the Theorems \ref{thm:embedding}, \ref{thm:completeness} and \ref{thm:precompactness} in the setting of metric tuples. We omit the proofs of these results since they are completely analogous to those in the case of metric pairs.
	\begin{theorem}
		Let $\{(X_i,X^N_i,\dots X^1_i)\}_{i\in\NN}$ be a sequence of proper metric tuples. Suppose that
		\[
		\sum_{i=1}^{\infty} d_{\GH}((X_i,X^N_i,\dots,X^1_i),(X_{i+1},X^N_{i+1},\dots,X^1_{i+1})) < \infty.
		\]
		Then there exists a non-complete locally complete metric space $Y$ and a metric tuple $(Z,Z^N,\dots,Z^1)$, where $Z=\partial Y$, with the following properties:
		\begin{enumerate}
			\item for each $i$ the space $X_i$ naturally isometrically embeds into $Y$.
			\item the space $\overline{Y}=Y\cup Z$ is proper.
			\item $(X_i,X^N_i,\dots,X^1_i)\toGH (Z,Z^N,\dots,Z^1)$.
			\item For all $R>0$ there are $R_i>R$ such that $R_i\to R$ and, for all $j\in\{1,\dots,N\}$,  
			\[
			\BB_{R_i}(W^j)\cap X_i \xrightarrow[]{\H} \BB_R(W^j)\cap Z.
			\]
			Moreover, if all $X_i$ are length, then $Z$ is a geodesic, and in this setting,
			\item For all $R>0$ and all $j\in\{1,\dots,N\}$, 
			\[
			\BB_R(X^j_i)\cap X_i\xrightarrow{\H} \BB_R(W^j)\cap Z.
			\]
		\end{enumerate}
	\end{theorem}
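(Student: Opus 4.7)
The plan is to adapt the proof of Theorem \ref{thm:embedding} stratum by stratum, exploiting the fact that the tuple-admissible metrics control the Hausdorff distance of each of the $N$ nested subsets simultaneously. First I would choose $\eps_n > d_\GH((X_n,X_n^N,\dots,X_n^1),(X_{n+1},X_{n+1}^N,\dots,X_{n+1}^1))$ with $\sum_n \eps_n < \infty$, and then pick a $(\eps_n;(X_n^N,\dots,X_n^1),(X_{n+1}^N,\dots,X_{n+1}^1))$-admissible metric $\delta_n$ on $X_n \sqcup X_{n+1}$. Setting $R_n := 1/\eps_n$, the space $Y$ is built by iteratively gluing the $X_n$'s via the $\delta_n$'s exactly as in Theorem \ref{thm:embedding}; items (1) and (2) then transfer verbatim, since the construction of $Y$ and the properness of $\overline{Y}$ depend only on the ambient metric, not on which distinguished subsets are being tracked.

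For the limit tuple, I would define, for each $j \in \{1,\dots,N\}$,
\[
Z^j := \left\{\lim_{i\to\infty} a_i \in \overline{Y} : a_i \in X_i^j,\ d(a_i,a_{i+1}) < \eps_i \text{ for all sufficiently large } i\right\},
\]
and set $Z := \overline{Y}\setminus Y$ as before. Each $Z^j$ is non-empty by the usual diagonal construction, closed in $\overline{Y}$ by a standard diagonal argument, and contained in $Z$. Crucially, the inclusions $X_i^1 \subseteq \dots \subseteq X_i^N$ are preserved under taking limits, so $Z^1 \subseteq \dots \subseteq Z^N$, giving a genuine metric tuple $(Z,Z^N,\dots,Z^1)$.

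Next I would verify convergence by proving the three analogous claims for each stratum. Fixing $\eps \in (0,1/2)$, $R=1/\eps$ and $N_0 \in \NN$ so that $\sum_{i\geq N_0}\eps_i < \eps/4$: the first claim, $d_\H^d(X_n^j,Z^j)<\eps$ for each $j$ and each $n\geq N_0$, follows by the same telescoping estimate used in the pair case, applied separately to each stratum using that $\delta_n$ controls the Hausdorff distance $d_\H^{\delta_n}(X_n^j,X_{n+1}^j)$ for every $j$. The second and third claims, $\BB_R^d(X_n^N)\cap X_n \subset B_\eps^d(Z)$ and $\BB_R^d(Z^N)\cap Z \subset B_\eps^d(X_n)$, depend only on the top stratum $X_n^N$ and on the admissibility of $\delta_n$ with respect to $X_n^N,X_{n+1}^N$, hence are literally identical to the pair case with $A_i$ replaced by $X_i^N$. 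Combining these gives the desired convergence in the sense of Definition \ref{def:Gromov-Hausdorff-CGGGMS-tuples}.

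Finally, parts (4) and (5) reduce, stratum by stratum, to the corresponding parts of Theorem \ref{thm:embedding}: for each $j$, the Kuratowski convergence $\BB_{R_i}(X_i^j)\cap X_i \to \BB_R(Z^j)\cap Z$ uses only Hausdorff-closeness of $X_i^j$ to $Z^j$ (claim 1 for $j$) together with the ambient properness of $\overline{Y}$, and the length-space argument for geodesic-ness of $Z$ is unchanged. The main technical point to be careful about is the simultaneous handling of all $N$ strata: one must verify that the $\eps$-approximation $\phi_i$ witnessing tuple convergence can be chosen to send each $X_i^j$ close to $Z^j$ in Hausdorff distance for every $j$ at once, which is precisely what the tuple-admissibility of the $\delta_n$'s guarantees. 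This is the only step where the tuple setting adds any genuine content beyond the pair case, and it is handled by the additive definition of $d_\H^\delta$ for tuples.
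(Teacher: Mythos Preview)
Your proposal is correct and matches the paper's approach exactly: the paper omits the proof entirely, stating only that it is ``completely analogous'' to that of Theorem~\ref{thm:embedding}, and your adaptation---choosing tuple-admissible metrics $\delta_n$, defining each $Z^j$ as the set of limits of Cauchy sequences through the $X_i^j$, and verifying Claim~1 stratum by stratum while Claims~2 and~3 involve only the top stratum $X_i^N$---is precisely the intended argument. One minor remark: your final sentence attributes the simultaneous control of all strata to ``the additive definition of $d_\H^\delta$ for tuples'', but in the non-compact (truncated) setting the relevant mechanism is the tuple-admissibility condition $d_\H^{\delta_n}(X_n^j,X_{n+1}^j)<\eps_n$ for each $j$ individually, not additivity; this does not affect the correctness of the argument.
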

	
	\begin{theorem}
		Let $\GHTuple_N$ denote the collection of all isometry classes of proper metric metric tuples $(X,X^N,\dots,X^1)$. Then $(\GHTuple_N,d_{\GH})$ is a complete metric space.
	\end{theorem}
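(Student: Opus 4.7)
The plan is to adapt the proof of Theorem \ref{thm:completeness} to the tuple setting, using the tuple version of the embedding theorem stated just above. First, I would verify that $d_{\GH}$ is in fact a metric on $\GHTuple_N$. Symmetry and the triangle inequality follow directly from the definition of admissible metrics and infima (gluing admissible metrics along a common factor, exactly as in the metric-pair case). The nontrivial point is positive-definiteness, which is the tuple analogue of Proposition \ref{prop:isom}: if two proper metric tuples satisfy $d_{\GH} = 0$, then there exists an isometry $f \colon X \to Y$ carrying each $X^j$ onto $Y^j$. The proof mirrors that of Proposition \ref{prop:isom}, working on the separable balls $\BB_r(X^N)$ (which contain every stratum $X^j$ since the sequence is nested) and building $f$ as a limit of almost-isometries obtained from a sequence of $(1/n;(X^N,\dots,X^1),(Y^N,\dots,Y^1))$-admissible metrics $\delta_n$. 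The preservation of each stratum follows from the condition $d_{\H}^{\delta_n}(X^j,Y^j) < 1/n$, which forces any diagonal construction sending a point of $X^j$ into $Y$ to land within distance $1/n$ of $Y^j$, and hence into $Y^j$ in the limit since $Y^j$ is closed.

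Once $d_{\GH}$ is known to be a metric, completeness proceeds exactly as in the proof of Theorem \ref{thm:completeness}. Given a Cauchy sequence $\{(X_i,X_i^N,\dots,X_i^1)\}_{i\in\NN}$ in $\GHTuple_N$, I would extract a subsequence $\{(X_{i_k},X_{i_k}^N,\dots,X_{i_k}^1)\}_{k\in\NN}$ whose consecutive Gromov--Hausdorff distances are summable, which is possible by the usual Cauchy argument. Applying the tuple embedding theorem to this subsequence produces a proper metric tuple $(Z,Z^N,\dots,Z^1)$: properness of $\overline{Y}$ (and hence of the closed subset $Z$) is item 2, each $Z^j$ is closed and nonempty by the analogue of the $W$-construction used in the proof of Theorem \ref{thm:embedding} (build Cauchy sequences $\{a_i\}$ with $a_i \in X_i^j$ and $d(a_i,a_{i+1}) < \eps_i$), and the inclusions $Z^N \supseteq \cdots \supseteq Z^1$ are inherited in the limit. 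Item 3 then yields
\[
(X_{i_k},X_{i_k}^N,\dots,X_{i_k}^1) \toGH (Z,Z^N,\dots,Z^1),
\]
so the isometry class of the limit lies in $\GHTuple_N$, and the standard fact that a Cauchy sequence with a convergent subsequence converges to the same limit upgrades this to convergence of the entire original sequence.

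The main obstacle, essentially the only nonroutine point, is the tuple version of Proposition \ref{prop:isom} used in the positive-definiteness step. The construction in \cite{herron} builds the limit isometry from a countable dense sequence in a fixed proper space by extracting diagonal subsequences of images in successively larger balls; the new ingredient here is preserving the entire nested sequence $X^N \supseteq \cdots \supseteq X^1$ simultaneously. I would handle this by fixing countable dense subsets $D^j \subset X^j$ for each $j$, assembling them into a single countable dense sequence in $\BB_r(X^N)$ (exhausting $X$ by such balls), and performing the diagonal extraction so that each $D^j$-subsequence converges under the admissible metrics $\delta_n$; the stratum-by-stratum estimate $d_{\H}^{\delta_n}(X^j,Y^j) < 1/n$ then pins down the image of each $D^j$ inside $Y^j$. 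Once this stratified version of Proposition \ref{prop:isom} is in place, the rest of the proof is a direct transcription from the metric-pair case.
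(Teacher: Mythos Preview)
Your proposal is correct and follows exactly the approach the paper intends: the paper omits the proof entirely, stating that it is ``completely analogous'' to the metric-pairs case (Theorem~\ref{thm:completeness} via the embedding theorem and Proposition~\ref{prop:isom}), and your outline is precisely that analogy, with the appropriate stratum-by-stratum bookkeeping for positive-definiteness.
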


	\begin{theorem}
		For any collection $\mathcal{X}$ of (isometry classes of) proper metric tuples such that $\{(X,X^N)\}_{X\in \mathcal{X}}$ is uniformly bounded in the sense of pairs, the following are equivalent:
		\begin{enumerate}
			\item $\mathcal{X}$ is precompact with respect to $d_\GH$.
			\item There exists $\pi\colon (0,\infty)\to (0,\infty)$ such that for all $\eps>0$,
			\[
			P(\eps, \BB_{1/\eps}(X^N))\leq \pi(\eps)
			\]
			for all $(X,X^N,\dots, X^1)\in \mathcal{X}$.
			\item There exists $\nu\colon (0,\infty)\to (0,\infty)$ such that for all $\eps>0$,
			\[
			N(\eps, \BB_{1/\eps}(X^N))\leq \nu(\eps)
			\]
			for all $(X,X^N\dots, X^1)\in \mathcal{X}$.
		\end{enumerate}
	\end{theorem}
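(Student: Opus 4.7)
The plan is to mirror the proof of Theorem \ref{thm:precompactness} with the additional bookkeeping demanded by the nested family $X^N\supseteq\dots\supseteq X^1$. The implication $(2)\Rightarrow(3)$ transfers verbatim, since $N(\eps,\BB_{1/\eps}(X^N))\leq P(\eps/2,\BB_{1/\eps}(X^N))\leq P(\eps/2,\BB_{2/\eps}(X^N))$ is an estimate concerning only the outermost set $X^N$; taking $\nu(\eps):=\pi(\eps/2)$ works.

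For $(1)\Rightarrow(2)$, I would first note that the tuple analog of Lemma \ref{lem:herron-3.9} holds with essentially the same proof: whenever $\delta$ is $(\eps;(X^N,\dots,X^1),(Y^N,\dots,Y^1))$-admissible on $X\sqcup Y$, then for $R+r\leq 1/\eps$ one has $P(r+2\eps,\BB_{R-2\eps}(Y^N)\cap Y)\leq P(r,\BB_R(X^N)\cap X)$, because the conditions used in the argument of \cite{herron} involve only the pair $(X^N,Y^N)$, which is among the pairs controlled by tuple-admissibility. Precompactness then yields, for fixed $\eps\in(0,1)$, a finite $(\eps/5)$-net $\{(X_m,X_m^N,\dots,X_m^1)\}_{m=1}^M$ in $\mathcal{X}$. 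Uniform boundedness together with properness makes each $\BB_{2/\eps}(X_m^N)$ compact, so $\pi(\eps):=\max_m P(\eps/2,\BB_{2/\eps}(X_m^N))$ is well-defined and delivers the desired bound via the tuple analog of Lemma \ref{lem:herron-3.9} applied between any element of $\mathcal{X}$ and a nearby net point.

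The main step is $(3)\Rightarrow(1)$, which rests on a tuple extension of Lemma \ref{lem:GH-with-nets}: given points $x_1,\dots,x_n\in X$ and $y_1,\dots,y_n\in Y$ covering $\BB_{1/2\eps}(X^N)$ and $\BB_{1/2\eps}(Y^N)$ respectively at scale $\eps$, reorderable into nested prefixes $k_1\leq k_2\leq\dots\leq k_N\leq n$ such that for each $j$ the first $k_j$ points form a near-net for $X^j$ (covering it at scale $\eps$ with every such ball meeting $X^j$) and analogously for $Y^j$, and satisfying $|d_X(x_i,x_j)-d_Y(y_i,y_j)|\leq\eps$ for all $i,j$, the admissible metric $\delta(x,y):=\min_i\{d_X(x,x_i)+d_Y(y,y_i)\}+\eps$ gives $d_\H^\delta(X^j,Y^j)<3\eps$ for each $j$ together with the requisite engulfing for $X^N$ and $Y^N$, hence $d_\GH\leq 3\eps$. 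The nesting $X^N\supseteq\dots\supseteq X^1$ makes such a compatible ordering automatic, since any ball meeting $X^{j-1}$ already meets $X^j$.

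Given $\nu$ and a sequence $\{(X_n,X_n^N,\dots,X_n^1)\}$ in $\mathcal{X}$, I would then extract subsequences in stages: stabilize the common inner covering number $M:=N(\eps/2,\BB_{2/\eps}(X_n^N))$; choose $M$ points $x_{n,1},\dots,x_{n,M}$ realising the covering and reorder so the first $k_j^{(n)}$ of them form a near-net of $X_n^j$ for each $j$; stabilize each $k_j^{(n)}$; finally, using uniform boundedness to cap all $M^2$ pairwise distances, apply the standard diagonal trick to stabilize these distances up to $\eps/2$. The tuple net lemma then yields $d_\GH<3\eps$ between any two terms of the final subsequence, and a further diagonal argument over $\eps\to 0$ produces a Cauchy subsequence, giving precompactness. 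The only non-routine obstacle is the combinatorial bookkeeping needed to arrange that a single ordering of the $M$ points works for all $N$ levels simultaneously on both sides; but the hypothesis of nested subsets makes this a matter of sequentially refining the initial block, and everything else is a direct transcription of the pair argument.
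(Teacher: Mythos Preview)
Your proposal is correct and matches the paper's approach: the paper omits the proof entirely, stating only that it is ``completely analogous'' to the pair case (Theorem~\ref{thm:precompactness}), and your write-up is precisely the expected transcription, including the one genuinely new point---the tuple extension of Lemma~\ref{lem:GH-with-nets} via nested index prefixes $k_1\leq\dots\leq k_N$, which the nesting $X^1\subseteq\dots\subseteq X^N$ makes available.
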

	
	\section{Applications}
	\label{sec:applications}
	
	In this section we apply our framework in different settings where metric pairs and metric tuples naturally emerge. Our first two applications pertain to the theory of Alexandrov spaces with curvature bounded below. We refer the reader to \cite{burago} for an introduction to this subject.

	We use the following notation throughout the section. Given $n\in \NN$, $k\in \RR$, $v>0$ and $D>0$, define
	\[
	\calM(n,k,D) = \left\{X:
	\begin{matrix}
		X\ \text{is an $n$-dimensional closed}\\ 
		\text{Riemannian manifold with}\\
		\diam(X)\leq D\ \text{and}\ \sec_{X}\geq k
	\end{matrix}
	\right\},
	\]
	\[
	\calN(n,k,D,v) = \left\{X:
	\begin{matrix}
		X\ \text{is a closed $n$-dimensional}\\ 
		\text{Riemannian manifold with}\\
		\diam(X)\leq D,\ \vol(X)>v\ \text{and}\ \left|\sec_{X}\right|\leq k
	\end{matrix}
	\right\}\ \text{(for $k>0$)},
	\]
	\[
	\calA(n,k,D) = \left\{X:
	\begin{matrix}
		X\ \text{is an $n$-dimensional closed}\\ 
		\text{Alexandrov space with}\\
		\diam(X)\leq D\ \text{and}\ \curv(X)\geq k
	\end{matrix}
	\right\}.
	\]
	
	For the first application, let us recall what extremal subsets of Alexandrov spaces are. Namely, given an Alexandrov space $X$, we say that a closed subset $E\subset X$ is \emph{extremal} if for any $q\in X\setminus E$, whenever $d_X(q,\cdot)|_E$ has a local minimum at $p\in E$, $p$ is a critical point of $d_X(q,\cdot)$ (see \cite{petrunin} for details). Theorem \ref{thm:extremal} is a natural extension of \cite[Lemma 4.1.3]{petrunin} to tuples of extremal sets.

	\begin{proof}[Proof of Theorem \ref{thm:extremal}]
		In \cite[Lemma 4.1.3]{petrunin}, the author proved that if $E_i\subset X_i$ is a sequence of extremal sets such that $E_i\to E$ for some set $E\subset X$ under the convergence $X_i\toGH X$ (i.e. the sets $\phi_i(E_i)$ converge to $E$ with respect to the Hausdorff distance in $X$ for some fixed $\eps_i$-approximations $\phi_i\colon X_i\to X$ with $\eps_i\searrow 0$), then $E$ is an extremal set in $X$. The result follows by applying this result and Definition \ref{def:Gromov-Hausdorff tuples-Jansen}.
	\end{proof}

	For the next application, we recall there is a  notion of \emph{equivariant Gromov--Hausdorff convergence} for isometric actions on metric spaces which was introduced by K. Fukaya in \cite{fukaya1}. Roughly speaking, we say that the actions by isometries $\{(X_i,G_i)\}_{i\in\NN}$ converge in the equivariant Gromov--Hausdorff sense to the action by isometries $(X,G)$, which we denote by $(X_i,G_i) \xrightarrow{eGH} (X,G)$, if there exist Gromov--Hausdorff approximations between the $X_i$ and $X$ that get closer and closer to be equivariant isometries as $i$ goes to infinity.
	
	Theorem \ref{thm:equivariant-convergence} is a consequence of K. Fukaya results in \cite{fukaya1} and \cite{fukaya2} which can be formulated using our framework. This might be thought of as an example of the conclusion of the previous Theorem \ref{thm:extremal}, since $X^H/G$ is an extremal subset in $X/G$ whenever $X\in \calA(n,k,D)$, $G$ is a group of isometries of $X$ and $H$ is a subgroup of $G$ \cite{petrunin}.

	\begin{proof}[Proof of Theorem \ref{thm:equivariant-convergence}]
		Observe that due to \cite[Theorem 2-1]{fukaya1}, we know that $(X_i,G)\xrightarrow[]{eGH} (X,G)$ implies that $X_i/G\toGH X/G$. Since the corresponding actions restrict to each $X_i^{H_j}$ and $X^{H_j}$, we can also apply \cite[Theorem 2-1]{fukaya1} to get $X_i^{H_j}\to X^{H_j}$, which implies the result.
		
		For the second part, by \cite[Theorem 6.9]{fukaya2}, since we have a non-collapsing convergence, we know that for sufficiently large $i\in\NN$ there exists an equivariant homeomorphism $f_i\colon (X_i,G)\to (X,G)$ which is also a Gromov--Hausdorff approximation. In particular, $f_i|\colon X_i^{H_j}\to X^{H_j}$ is also a homeomorphism and a Gromov--Hausdorff approximation. This clearly implies the first convergence. 
		
	\end{proof}

	The last application is related to the setting of stratified spaces. These are singular metric spaces where singularities are given by manifolds that are attached in a reasonable way. Examples of this notion are given by Riemannian singular foliations and quotient spaces of isometric actions on closed manifolds. There are many equivalent definitions of stratified spaces and we refer the reader to \cite{bertrand21,pflaum01} for an introduction to the analytic and geometric theory of stratified spaces.

	We use the following notation: given $n\in \NN$, $k\in \RR$, $D>0$ and $v>0$, define
	\[
	\calS(n,k,D,v) = \left\{(X^n,\dots,X^0)\in \GHTuple_n:
	\begin{array}{c}
		X^n=\bigsqcup_{i=0}^{n} \Sigma^i\ \text{is an $n$-dimensional closed stratified}\\ 
		\text{manifold, where $\Sigma^i$ is the $i$-dimensional stratum,}\\ 
		\text{$X^{i} = \bigsqcup_{j=0}^{i} \Sigma^j$, $X^n\in \calA(n,k,D)$, and \ $\vol^n(X^n)\geq v$}
	\end{array}
	\right\}
	\]
	where $\vol^n$ denotes the $n$-dimensional Hausdorff measure. Moreover, we define
	\[
	\calS^e(n,k,D,v) = \left\{(X^n,\dots,X^0)\in \calS(n,k,D,v):
	\begin{array}{c}
		\text{$\overline{\Sigma^i}$ is an extremal set of $X^n$,}\\ 
		\text{whenever $\Sigma^i= X^i\setminus X^{i+1}$ is non-empty}
	\end{array}
	\right\}.
	\]
	Observe that for the previous definitions we are sticking to the notation in \cite{bertrand21}.
	
	\begin{proof}[Proof of Theorem \ref{thm:stratified-spaces}]
		Let $\{(X^n_i,\dots, X^0_i)\}_{i\in\NN}$ be a sequence of metric tuples in $\calS(n,k,D,v)$, with $X^n_i = \bigsqcup_{j=0}^{n}\Sigma^j_i$. In particular, $\{X^n_i\}_{i\in \NN}\subset \calA(n,k,D,v)$, which is known to be precompact with respect to the usual Gromov--Hausdorff convergence (see \cite[Proposition 10.7.3]{burago}). Therefore, up to passing to a subsequence, there exists $X^n\in \calA(n,k,D)$ such that $X^n_i\toGH X^n$ (see \cite[Corollaries 10.8.25, 10.10.11]{burago}). Moreover, by considering Gromov--Hausdorff approximations $f_i\colon X^n_i\to X^n$, which are homeomorphisms for sufficiently large $i\in\NN$ due to Perelman's Stability Theorem \cite{kapovitch,perelman}, and due to the compactness of $X^n$, we can assume (again, possibly after passing to a subsequence) that $X^j_i\toGH X^j$ for some closed set $X^j\subset X^n$, for each $j\in \{0,\dots,n\}$. Then, in particular, 
		\[
		(X^n_i,\dots,X^0_i)\toGH (X^n,\dots, X^0).
		\]
		Therefore, $\calS(n,k,D,v)$ is precompact as claimed. 
		
		Now, if we assume that there are infinitely many relative homeomorphism types in $\calS^e(n,k,D,v)$, we can consider a sequence $\{(X^n_i,\dots,X^0_i)\}_{i\in\NN}$ in $\calS^e(n,k,D,v)$ such that 
		$(X_i^n,\dots, X_i^0)$ and $(X_j^n,\dots, X_j^0)$ are not homeomorphic as tuples (i.e. there is no homeomorphism $X_i^n\to X_j^n$ preserving the elements in the tuple) for any $i\neq j$, $i,j\in \NN$. However, by the precompactness of $\calS(n,k,D,v)$, up to passing to a subsequence, there exists some $(X^n,\dots,X^0) \in \GHTuple_n$ such that 
		\[
		(X_i^n,\dots, X_i^0)\toGH (X^n,\dots,X^0).
		\]
		Moreover, since $\vol^n(X^n) \geq v>0$, and $X^n_i\toGH X^n$, we know that $X^n\in \calA(n,k,D)$ (again, see \cite[Corollaries 10.8.25, 10.10.11]{burago}). Furthermore, due to \cite[Lemma 4.1.3]{petrunin} and the fact that $(X^n_i,\dots,X^0_i)\toGH (X^n,\dots, X^0)$, we get that $\overline{\Sigma}^j_i\to \overline{\Sigma}^j$ and $\overline{\Sigma}^j$ is an extremal set in $X^n$, for each $j\in\{0,\dots,n\}$. By \cite[Theorem 4.3]{HS17}, there exists a homeomorphism $(X^n_i,\dots,X^0_i)\to (X^n,\dots,X^0)$ for sufficiently large $i\in\NN$, which is a contradiction.
	\end{proof}

	\bibliographystyle{plainurl} 
	\bibliography{bibliografia}

\begin{thebibliography}{10}

\bibitem{ambrosio-gigli-savare}
L.~Ambrosio, N.~Gigli, and G.~Savar{\'e}.
\newblock Metric measure spaces with {Riemannian} {Ricci} curvature bounded
  from below.
\newblock {\em Duke Math. J.}, 163(7):1405--1490, 2014.
\newblock \href {https://doi.org/10.1215/00127094-2681605}
  {\path{doi:10.1215/00127094-2681605}}.

\bibitem{bertrand21}
J.~Bertrand, C.~Ketterer, I.~Mondello, and T.~Richard.
\newblock Stratified spaces and synthetic {Ricci} curvature bounds.
\newblock {\em Ann. Inst. Fourier}, 71(1):123--173, 2021.
\newblock \href {https://doi.org/10.5802/aif.3393}
  {\path{doi:10.5802/aif.3393}}.

\bibitem{burago}
D.~Burago, Yu. Burago, and S.~Ivanov.
\newblock {\em A course in metric geometry}, volume~33 of {\em Grad. Stud.
  Math.}
\newblock Providence, RI: American Mathematical Society (AMS), 2001.

\bibitem{bgp}
Yu. Burago, M.~Gromov, and G.~Perel'man.
\newblock A. {D}. {Alexandrov} spaces with curvature bounded below.
\newblock {\em Russ. Math. Surv.}, 47(2):1, 1992.
\newblock \href {https://doi.org/10.1070/RM1992v047n02ABEH000877}
  {\path{doi:10.1070/RM1992v047n02ABEH000877}}.

\bibitem{chazal-et-al}
F.~Chazal, D.~Cohen-Steiner, L.~J. Guibas, F.~M\'{e}moli, and S.~Y. Oudot.
\newblock Gromov-hausdorff stable signatures for shapes using persistence.
\newblock {\em Computer Graphics Forum}, 28(5):1393--1403, 2009.
\newblock \href {https://doi.org/10.1111/j.1467-8659.2009.01516.x}
  {\path{doi:10.1111/j.1467-8659.2009.01516.x}}.

\bibitem{CGGGMS}
M.~Che, F.~Galaz-Garc\'{\i}a, L.~Guijarro, and I.~Membrillo~Solis.
\newblock Metric geometry of space of persistence diagrams.
\newblock arXiv:2109.14697, 2022.

\bibitem{stability-tda}
D.~Cohen-Steiner, H.~Edelsbrunner, and J.~Harer.
\newblock Stability of persistence diagrams.
\newblock {\em Discrete Comput Geom}, 37:103--120, 2007.
\newblock \href {https://doi.org/10.1007/s00454-006-1276-5}
  {\path{doi:10.1007/s00454-006-1276-5}}.

\bibitem{edwards}
D.~A. Edwards.
\newblock The structure of superspace.
\newblock Stud. {Topol}., {Proc}. {Conf}. {Charlotte}, {N}. {C}., 1974, 121-133
  (1975)., 1975.
\newblock \href {https://doi.org/10.1016/B978-0-12-663450-1.50017-7}
  {\path{doi:10.1016/B978-0-12-663450-1.50017-7}}.

\bibitem{fukaya1}
K.~Fukaya.
\newblock Theory of convergence for {Riemannian} orbifolds.
\newblock {\em Jpn. J. Math., New Ser.}, 12:121--160, 1986.

\bibitem{fukaya2}
Kenji Fukaya.
\newblock Hausdorff convergence of {Riemannian} manifolds and its applications.
\newblock Recent topics in differential and analytic geometry, {Adv}. {Stud}.
  {Pure} {Math}. 18-{I}, 143-238 (1990)., 1990.

\bibitem{gromov2}
M.~Gromov.
\newblock Groups of polynomial growth and expanding maps. publications.
\newblock {\em Math\'{e}matiques de L'Institut des Hautes Scientifiques},
  53:53--78, 1981.
\newblock \href {https://doi.org/10.1007/BF02698687}
  {\path{doi:10.1007/BF02698687}}.

\bibitem{gromov}
M.~Gromov.
\newblock Structures m{\'e}triques pour les vari{\'e}t{\'e}s riemanniennes.
  {R{\'e}dig{\'e}} par {J}. {Lafontaine} et {P}. {Pansu}.
\newblock Textes {Math{\'e}matiques}, 1. {Paris}: {Cedic}/{Fernand} {Nathan}.
  vii, 152 pp. (1981)., 1981.

\bibitem{gromov1999metric}
M.~Gromov, M.~Katz, P.~Pansu, and S.~Semmes.
\newblock {\em Metric structures for Riemannian and non-Riemannian spaces},
  volume 152.
\newblock Boston:Birkh\"{a}user, 1999.

\bibitem{Grove1990}
K.~Grove, P.~V. Petersen, and J.-Y. Wu.
\newblock Geometric finiteness theorems via controlled topology.
\newblock {\em Invent. Math.}, 99(1):205--213, 1990.
\newblock \href {https://doi.org/10.1007/BF01234417}
  {\path{doi:10.1007/BF01234417}}.

\bibitem{Grove1991}
K.~Grove, P.~V. Petersen, and J.-Y. Wu.
\newblock Erratum: {Geometric} finiteness theorems via controlled topology.
\newblock {\em Invent. Math.}, 104(1):221--222, 1991.
\newblock \href {https://doi.org/10.1007/BF01245073}
  {\path{doi:10.1007/BF01245073}}.

\bibitem{HS17}
John Harvey and Catherine Searle.
\newblock Orientation and symmetries of {A}lexandrov spaces with applications
  in positive curvature.
\newblock {\em J. Geom. Anal.}, 27(2):1636--1666, 2017.
\newblock \href {https://doi.org/10.1007/s12220-016-9734-7}
  {\path{doi:10.1007/s12220-016-9734-7}}.

\bibitem{herron}
D.~A. Herron.
\newblock Gromov-{Hausdorff} distance for pointed metric spaces.
\newblock {\em J. Anal.}, 24(1):1--38, 2016.
\newblock \href {https://doi.org/10.1007/s41478-016-0001-x}
  {\path{doi:10.1007/s41478-016-0001-x}}.

\bibitem{jansen}
D.~Jansen.
\newblock Notes on pointed gromov-hausdorff convergence.
\newblock arXiv:1703.09595, 2017.

\bibitem{kapovitch}
V.~Kapovitch.
\newblock Perelman's stability theorem.
\newblock In {\em Metric and comparison geometry. Surveys in differential
  geometry. Vol. XI.}, pages 103--136. Somerville, MA: International Press,
  2007.

\bibitem{lott-villani}
J.~Lott and C.~Villani.
\newblock Ricci curvature for metric-measure spaces via optimal transport.
\newblock {\em Ann. Math. (2)}, 169(3):903--991, 2009.
\newblock \href {https://doi.org/10.4007/annals.2009.169.903}
  {\path{doi:10.4007/annals.2009.169.903}}.

\bibitem{memoli05}
F.~M{\'e}moli and G.~Sapiro.
\newblock A theoretical and computational framework for isometry invariant
  recognition of point cloud data.
\newblock {\em Found. Comput. Math.}, 5(3):313--347, 2005.
\newblock URL: \url{purl.umn.edu/4035}, \href
  {https://doi.org/10.1007/s10208-004-0145-y}
  {\path{doi:10.1007/s10208-004-0145-y}}.

\bibitem{perelman}
G.~Perelman.
\newblock Spaces with curvature bounded below.
\newblock In {\em Proceedings of the international congress of mathematicians,
  ICM '94, August 3-11, 1994, Z\"urich, Switzerland. Vol. I}, pages 517--525.
  Basel: Birkh{\"a}user, 1995.

\bibitem{petrunin}
A.~Petrunin.
\newblock Semiconcave functions in {Alexandrov}'s geometry.
\newblock In {\em Metric and comparison geometry. Surveys in differential
  geometry. Vol. XI.}, pages 137--201. Somerville, MA: International Press,
  2007.

\bibitem{pflaum01}
M.~J. Pflaum.
\newblock {\em Analytic and geometric study of stratified spaces}, volume 1768
  of {\em Lect. Notes Math.}
\newblock Berlin: Springer, 2001.
\newblock \href {https://doi.org/10.1007/3-540-45436-5}
  {\path{doi:10.1007/3-540-45436-5}}.

\bibitem{sturm1}
Karl-Theodor Sturm.
\newblock On the geometry of metric measure spaces. {I}.
\newblock {\em Acta Math.}, 196(1):65--131, 2006.
\newblock \href {https://doi.org/10.1007/s11511-006-0002-8}
  {\path{doi:10.1007/s11511-006-0002-8}}.

\bibitem{sturm2}
Karl-Theodor Sturm.
\newblock On the geometry of metric measure spaces. {II}.
\newblock {\em Acta Math.}, 196(1):133--177, 2006.
\newblock \href {https://doi.org/10.1007/s11511-006-0003-7}
  {\path{doi:10.1007/s11511-006-0003-7}}.

\end{thebibliography}
	
\end{document}